\documentclass[11pt]{article}
\usepackage{amsfonts}
\usepackage{graphics}
\usepackage{indentfirst}
\usepackage{color}
\usepackage{cite}
\usepackage{latexsym}
\usepackage[paper=a4paper, left=1.6cm, right=1.6cm, top=1.8cm, bottom=1.6cm, headheight=5.5pt, footskip=0.8cm, footnotesep=0.8cm, centering, includefoot]{geometry}
\usepackage{amsmath}
\allowdisplaybreaks
\usepackage{amssymb}
\usepackage[colorlinks, linkcolor=red]{hyperref}
\hypersetup{urlcolor=red, citecolor=red}
\usepackage[dvips]{epsfig}
\usepackage{amscd}

\usepackage{amsthm}
\usepackage{mathrsfs}
\usepackage{verbatim}

\DeclareMathOperator{\divv}{div}
\DeclareMathOperator{\curl}{curl}
\DeclareMathOperator{\loc}{loc}
\allowdisplaybreaks

\begin{document}
\title{Global weak solutions with higher regularity to the two-dimensional isentropic compressible Navier--Stokes and magnetohydrodynamic equations with far-field vacuum and unbounded density
\thanks{
This research was partially supported by National Natural Science Foundation of China (No. 12371227) and Fundamental Research Funds for the Central Universities (No. SWU--KU24001).
}
}

\author{Shuai Wang,\ Xin Zhong {\thanks{E-mail addresses: swang238@163.com (S. Wang),
xzhong1014@amss.ac.cn (X. Zhong).}}\date{}\\
\footnotesize
School of Mathematics and Statistics, Southwest University, Chongqing 400715, P. R. China}

\maketitle
\newtheorem{theorem}{Theorem}[section]
\newtheorem{definition}{Definition}[section]
\newtheorem{lemma}{Lemma}[section]
\newtheorem{proposition}{Proposition}[section]
\newtheorem{corollary}{Corollary}[section]
\newtheorem{remark}{Remark}[section]
\renewcommand{\theequation}{\thesection.\arabic{equation}}
\catcode`@=11 \@addtoreset{equation}{section} \catcode`@=12
\maketitle{}

\begin{abstract}
We establish the global existence of a class of weak solutions to the isentropic compressible Navier--Stokes and magnetohydrodynamic (MHD) equations on the whole plane under a suitably small initial energy. The solutions constructed here admit far-field vacuum and unbounded densities. Moreover, they possess an intermediate regularity regime between the finite-energy weak solutions of Lions--Feireisl and the framework of Hoff. This particularly extends our previous half-plane case with Dirichlet boundary conditions (arXiv:2601.11852) to the whole-plane MHD coupling, and we also generalize the works of Hoff (Comm. Pure Appl. Math. 55 (2002), pp. 1365--1407) and Suen and Hoff (Arch. Ration. Mech. Anal. 205 (2012), pp. 27--58) by allowing vacuum states and unbounded density. Our analysis lies in a new perspective that exploits the spatial integrability of the density and the resulting integrability of the pressure, together with the specific structure of the MHD system.
\end{abstract}

\textit{Key words and phrases}. Compressible Navier--Stokes equations; compressible magnetohydrodynamics; global weak solutions; vacuum.

2020 \textit{Mathematics Subject Classification}. 76W05; 76N10; 35A01.


\tableofcontents

\section{Introduction}
\subsection{Background and motivation}

In many physically relevant flows, the fluid is sufficiently conducting (or partially ionized) so that magnetic induction cannot be treated as a passive field. In this regime, hydrodynamics becomes magnetohydrodynamics by incorporating the Lorentz force into the momentum balance and coupling it with Faraday's induction law, so that the velocity field and the magnetic field evolve in a genuinely intertwined manner.
To be precise, the compressible MHD system can be viewed as a coupling between the compressible Navier--Stokes equations for the fluid variables and a reduced Maxwell system in the MHD approximation: the momentum equation is coupled to the magnetic field through the Lorentz force
(equivalently written in divergence form as $\mathbf{B}\cdot\nabla\mathbf{B}-\frac12\nabla|\mathbf{B}|^2$ under $\divv\mathbf{B}=0$), while the magnetic field itself evolves according to Maxwell's equations with the induction equation as the principal dynamical law. In two space dimensions, this coupling leads to the following compressible MHD equations
\begin{align}\label{a1}
\begin{cases}
\rho_t+\divv(\rho\mathbf{u})=0,\\
(\rho\mathbf{u})_t+\divv(\rho\mathbf{u}\otimes\mathbf{u})+\nabla P=
\mu\Delta\mathbf{u}+(\mu+\lambda)\nabla\divv\mathbf{u}+\mathbf{B}\cdot\nabla\mathbf{B}-\frac12\nabla|\mathbf{B}|^2,\\
\mathbf{B}_t+\mathbf{u}\cdot\nabla\mathbf{B}-\mathbf{B}\cdot\nabla\mathbf{u}+\mathbf{B}\divv\mathbf{u}=\nu\Delta\mathbf{B},\\
\divv\mathbf{B}=0.
\end{cases}
\end{align}

We consider the Cauchy problem for system \eqref{a1} posed on $\mathbb{R}^2$ with the initial data
\begin{equation}\label{a2}
(\rho,\rho\mathbf{u},\mathbf{B})|_{t=0}=(\rho_0,\rho_0\mathbf{u}_0,\mathbf{B}_0)(\mathbf{x}),\ \ \ \mathbf{x}\in\mathbb{R}^2,
\end{equation}
which satisfy the far-field condition
\begin{equation}\label{a3}
(\rho_0,\rho_0\mathbf{u}_0,\mathbf{B}_0)(\mathbf{x})\rightarrow(0,\mathbf{0},\mathbf{0}),\ \ \ \text{as}\ |\mathbf{x}|\rightarrow\infty.
\end{equation}
Here $\rho$, $\mathbf{u}=(u^1,u^2)$, and $\mathbf{B}=(B^1,B^2)$ denote the density, velocity, and magnetic field, respectively,
while the pressure $P$ is given by the equation of state
\begin{equation*}
  P=P(\rho)=a\rho^\gamma, \ \ a>0, \ \gamma>1.
\end{equation*}
The constants $\mu$ and $\lambda$ represent shear viscosity and bulk viscosity of the fluid, respectively, satisfying the physical restrictions
\begin{equation*}
\mu>0,\ \ \ \mu+\lambda\geq0,
\end{equation*}
and $\nu>0$ is the resistivity coefficient.

The compressible MHD system \eqref{a1} serves as a fundamental macroscopic model in settings where both compressibility and magnetic stresses are significant, including astrophysical and geophysical plasmas and laboratory flows of conducting fluids (see, e.g., \cite{C25,D17,G16}). Some applications often involve large spatial domains with tenuous far-field states, where vacuum may occur and strong degeneration can develop.
Compared with the isentropic compressible Navier--Stokes equations (i.e., $\mathbf{B}\equiv\mathbf{0}$), the MHD system is genuinely two-way coupled and cannot be viewed as a routine perturbation. The momentum balance in \eqref{a1} is modified by the Lorentz force $\mathbf{B}\cdot\nabla\mathbf{B}-\frac12\nabla|\mathbf{B}|^2$,
highlighting the complementary roles of magnetic tension and magnetic pressure, while the magnetic field evolves through the induction equation, where transport and stretching by the flow interact directly with $\nabla\mathbf{u}$ and are accompanied by resistive diffusion under
$\divv\mathbf{B}=0$. In the presence of vacuum, the interplay between degeneracy and these MHD-specific couplings makes global bounds in weak regularity classes particularly delicate. Accordingly, the analysis must exploit the intrinsic structure of the magnetic subsystem. This motivates the study of global well-posedness in weak regularity classes pursued in the present work.

We briefly review several results on multidimensional compressible Navier--Stokes and MHD systems that are most relevant to our work. A classical theme concerning the global well-posedness of smooth solutions is small perturbations for initial data around an equilibrium state. For the three-dimensional compressible Navier--Stokes equations, Matsumura and Nishida \cite{MN80,MN83} first obtained global classical solutions when the initial data are sufficiently close to a constant equilibrium in $H^3$.
For the two-dimensional MHD Cauchy problem, Kawashima \cite{K84} proved the global existence of classical solutions for initial data being sufficiently close to a non-vacuum equilibrium. Later on, in the spirit of the Matsumura--Nishida theory \cite{MN80,MN83}, Li--Yu \cite{LY11} and Zhang--Zhao \cite{ZZ10} independently established the global existence and uniqueness of classical solutions to the compressible MHD system in $\mathbb{R}^3$ for small $H^3(\mathbb{R}^3)$ perturbations around an equilibrium with strictly positive density. These small-perturbation theories are most natural in the non-vacuum regime, since vacuum induces degeneracy so that standard perturbative estimates fail.

Beyond the non-vacuum regime, the possible presence of vacuum is of comparable physical relevance but mathematically more delicate, since degeneracy near vacuum region complicates regularity and stability analysis. A cornerstone of general solutions theory is due to P.-L. Lions \cite{PL98}, who proved the global existence of finite-energy weak solutions for the compressible Navier--Stokes system with vacuum. His approach combines the renormalized techniques with the effective viscous flux arguments, and has inspired significant extensions. Feireisl and collaborators \cite{F04,FNP01} subsequently reduced the condition on the adiabatic exponent to $\gamma > \frac{N}{2}$ ($N$ is the spatial dimension) with the aid of oscillation defect measures. At the same time, under spherical symmetry or axisymmetry, Jiang and Zhang \cite{JZ01,JZ03} obtained global weak solutions for all $\gamma>1$  by deriving refined estimates for the effective viscous flux that yield improved integrability of the density. For the compressible MHD system, Hu and Wang \cite{HW10} established the global existence and large-time behavior of finite-energy weak solutions in three-dimensional bounded domains with Dirichlet boundary conditions for $\gamma>\frac{3}{2}$, by exploiting the magnetic coupling and building on the weak convergence framework of Lions \cite{PL98} and Feireisl \cite{F04}. Despite these advances, uniqueness for such weak solutions remains a fundamental and largely open problem.

Parallel to the development of weak solutions theory, there is now a substantial global well-posedness theory for strong (or classical) solutions in the presence of vacuum, provided the initial total energy is small even when the initial data exhibit large oscillations. A significant progress in this direction is due to Huang, Li, and Xin \cite{HLX12}, who obtained a global well-posedness result (existence and uniqueness) for classical solutions of the
three-dimensional compressible Navier--Stokes equations with vacuum under a small-energy assumption; this approach was later adapted to the three-dimensional compressible MHD system in \cite{LXZ13}.
In two dimensions, the far-field analysis is more delicate, in part because solutions exhibit distinct logarithmic behavior at spatial infinity. To overcome this difficulty, the strategy of \cite{HLX12} was extended to the whole plane in \cite{LX19} via spatial-weighted estimates and a refined large-time decay analysis. Meanwhile,
the global existence and uniqueness of strong solutions to the compressible MHD equations in $\mathbb{R}^2$ were established in \cite{LSX16} for small initial energy by assuming additionally a weighted regularity condition on the initial magnetic field.
More recently, the small-energy requirement has been relaxed in three dimensions. The case of Navier--Stokes system was treated in \cite{HHPZ24}, where the adiabatic exponent $\gamma$ approaches $1$.
An analogous improvement for the three-dimensional MHD Cauchy problem was obtained in \cite{HHPZ17} under a suitable smallness condition of the form
$\big[(\gamma-1)^{1/9}+\nu^{-1/4}\big]E_0$ with $E_0$ being the initial total energy. Overall, vacuum effects and magnetic coupling underscore that MHD is not merely a black-box extension of Navier--Stokes theory.

Turning back to weak solutions, an essential issue beyond existence is well-posedness, in particular uniqueness and stability. Addressing these questions typically requires working with weak solutions that possess additional regularity. Alongside the finite-energy weak solutions \cite{PL98,FNP01,HW10} and the small-energy classical solutions \cite{HLX12,LX19,LY11,LXZ13,ZZ10,LSX16}, a third important class is provided by the intermediate weak solutions introduced by Hoff \cite{Hoff95,Hoff95*}. These solutions enjoy higher regularity than the finite-energy class, for instance they allow the definition of particle trajectories away from vacuum, while still accommodating low-regularity density profiles including possible discontinuities \cite{Hoff02}. This enhanced regularity can be exploited to obtain well-posedness properties.
Hoff \cite{Hoff06} established uniqueness and continuous dependence, and Hoff and Santos \cite{Hoff08} further studied the Lagrangian structure and propagation of singularities.
For the compressible MHD system, Suen and Hoff \cite{SH12} proved global existence of such intermediate weak solutions in $\mathbb{R}^3$ for initial data that are small in $L^2(\mathbb{R}^3)$ and have strictly positive, essentially bounded initial density. This was later extended in \cite{LYZ13} to allow interior vacuum. The resulting regularity was subsequently used in \cite{S20} to prove uniqueness and continuous dependence, strengthening the existence theory of \cite{SH12}. Nevertheless, the construction of Hoff-type intermediate weak solutions under a far-field vacuum condition remains open. This regime is physically relevant for flows into a rarefied background, and it poses substantial analytical difficulties due to degeneracy as the density tends to zero at infinity.

In the far-field vacuum regime, where $\rho(\mathbf{x})\rightarrow0$ as $|\mathbf{x}|\rightarrow\infty$, one is led to a weak theory that can describe flows into a tenuous or rarefied background and allows vacuum and low regularity. Existing results in this setting are mainly at the strong-solution level and rely essentially on decay-based and higher-order estimates, as in \cite{LX19}. In the MHD case, the analogous result in \cite{LSX16} additionally requires suitable decay of the magnetic field, together with an essential use of the resistive diffusion structure.
Since such decay mechanisms are not available at the weak level, we are led to seek an alternative framework. Motivated by our previous work \cite{WZ26}, we propose to capture the far-field behavior through an $L^p$ framework of
the density with $\gamma<p<\infty$, which enforces integrable control of the far-field tails and seems more compatible with vanishing density at infinity than an $L^\infty$ bound. The goal of the present paper is to investigate whether this $L^p$ density framework can bridge the gap in the presence of magnetic coupling, and to clarify the role played by the magnetic field in overcoming the difficulties induced by vacuum both in the interior and at spatial infinity.

\subsection{Main results}

Before stating our main results, we first formulate the notations and conventions used throughout this paper. We denote by $C$ a generic positive constant which may vary at different places, and write
$C(f)$ to emphasize its dependence on $f$. The symbol $\Box$ marks the end of a proof and $a\triangleq b$ means $a=b$ by definition. For $1\le p\le \infty$ and integer $k\ge 0$, we denote the standard Sobolev spaces:
\begin{align*}
L^p=L^p(\mathbb{R}^2),\ W^{k, p}=W^{k, p}(\mathbb{R}^2),\ H^k=W^{k, 2}, \
D^{1,p}=\{f\in L_{\loc}^1(\mathbb{R}^2):\|\nabla f\|_{L^p}<\infty\}, \ D^k=D^{k,2}.
\end{align*}
For simplicity, we write
\begin{align*}
B_R\triangleq \{\mathbf{x}\in\mathbb{R}^2: |\mathbf{x}|<R\},\ \
\int f \mathrm{d}\mathbf{x}=\int_{\mathbb{R}^2} f \mathrm{d}\mathbf{x}, \ \ f_i=\partial_if=\frac{\partial f}{\partial x_i}.
\end{align*}
For two $n\times n$ matrices $A=\{a_{ij}\}$ and $B=\{b_{ij}\}$, the symbol $A: B$ represents the trace of the matrix product $AB^\top$, i.e.,
\begin{equation*}
A:B\triangleq\operatorname{tr}(AB^\top)=\sum_{i,j=1}^na_{ij}b_{ij}.
\end{equation*}

The initial total energy is defined as
\begin{equation}\label{1.4}
C_0\triangleq\int_{\mathbb{R}^2}
\bigg(\frac{1}{2}\rho_0|\mathbf{u}_0|^2+\frac{1}{2}|\mathbf{B}_0|^2
+\frac{1}{\gamma-1}P(\rho_0)\bigg)\mathrm{d}\mathbf{x}.
\end{equation}
Moreover, we denote by
\begin{align}\label{1.5}
\begin{cases}
\dot{f}\triangleq f_t+\mathbf{u}\cdot\nabla f,\\
F\triangleq(2\mu+\lambda)\divv\mathbf{u}-P(\rho)-\frac12|\mathbf{B}|^2,\\
\curl\mathbf{u}\triangleq-\nabla^\bot\cdot\mathbf{u}=-\partial_2u^1+\partial_1u^2,
\end{cases}
\end{align}
which represent the material derivative of $f$, the effective viscous flux, and the vorticity, respectively.

We recall the definition of weak solutions to the problem \eqref{a1}--\eqref{a3} in the sense of \cite{Hoff95,Hoff95*,SH12}.
\begin{definition}\label{d1.1}
A triplet $(\rho, \mathbf{u}, \mathbf{B})$ is said to be a weak solution to the problem \eqref{a1}--\eqref{a3} provided that
\begin{equation*}
(\rho, \rho\mathbf{u}, \mathbf{B}) \in C([0,\infty);H^{-1}(\mathbb{R}^2)),\ \
(\nabla\mathbf{u}, \nabla\mathbf{B})\in L^2(\mathbb{R}^2\times(0,\infty))
\end{equation*}
with $\divv \mathbf{B}(\cdot,t)=0$ in $\mathcal{D}'(\mathbb{R}^2)$ for $t>0$ and  $(\rho,\mathbf{u},\mathbf{B})|_{t=0}=(\rho_0,\mathbf{u}_0,\mathbf{B}_0)$.
Moreover, for any $t_2\ge t_1\ge 0$ and any $C^1$ test function $\psi({\bf x},t)$ with uniformly bounded support in ${\bf x}$ for $t\in[t_1,t_2]$, the following identities hold\footnote{Throughout this paper, we will use the Einstein summation over repeated indices convention.}:
\begin{align*}
&\int_{\mathbb{R}^2}\rho(\mathbf{x},\cdot)\psi(\mathbf{x},\cdot)
\mathrm{d}\mathbf{x}\Big|_{t_1}^{t_2}=\int_{t_1}^{t_2}\int_{\mathbb{R}^2}(\rho\psi_t+
\rho\mathbf{u}\cdot\nabla\psi)\mathrm{d}\mathbf{x}\mathrm{d}t,\\
&\int_{\mathbb{R}^2}(\rho u^j)(\mathbf{x},\cdot)\psi
(\mathbf{x},\cdot)\mathrm{d}\mathbf{x}\Big|_{t_1}^{t_2}
+\int_{t_1}^{t_2}
\int_{\mathbb{R}^2}\big(\mu\nabla u^j\cdot\nabla\psi+(\mu+\lambda)\divv\mathbf{u}\psi_j\big)
\mathrm{d}\mathbf{x}\mathrm{d}t\notag\\
&\qquad=\int_{t_1}^{t_2}
\int_{\mathbb{R}^2}\Big(\rho u^j\psi_t+\rho u^j\mathbf{u}\cdot\nabla\psi+P\psi_j
+\frac{1}{2}|\mathbf{B}|^2\psi_j-B^j\mathbf{B}\cdot\nabla\psi\Big)
\mathrm{d}\mathbf{x}\mathrm{d}t,\\
&\int_{\mathbb{R}^2} B^j(\mathbf{x},\cdot)\psi(\mathbf{x},\cdot)
\mathrm{d}\mathbf{x}\Big|_{t_1}^{t_2}=\int_{t_1}^{t_2}\int_{\mathbb{R}^2}\big(B^j\psi_t+
B^j\mathbf{u}\cdot\nabla\psi-u^j\mathbf{B}\cdot\nabla\psi-\nu\nabla B^j\cdot\nabla\psi\big)\mathrm{d}\mathbf{x}\mathrm{d}t.
\end{align*}
\end{definition}

Given $\alpha\in(1,2)$, assume that the initial data $(\rho_0,\mathbf{u}_0, \mathbf{B}_0)$ satisfies
\begin{equation}\label{1.6}
  0\leq\rho_0\in L^{\theta}, \ \ \bar{x}^\alpha\rho_0\in L^1, \ \ (\mathbf{u}_0,\mathbf{B}_0)\in D^1,\ \
  (\rho_0|\mathbf{u}_0|^2,|\mathbf{B}_0|^2)\in L^1, \ \ \divv\mathbf{B}_0=0,
\end{equation}
where
\begin{equation}\label{1.7}
\theta\triangleq \frac{4\gamma(2\alpha+1)}{\alpha-1}\in(20\gamma,\infty),
\ \ \bar{x}\triangleq\big(e+|\mathbf{x}|^2\big)^{1/2}\log^2\big(e+|\mathbf{x}|^2\big).
\end{equation}
Without loss of generality, we normalize the initial density $\rho_0$ so that
\begin{equation}\label{1.8}
  \int_{\mathbb{R}^2}\rho_0\mathrm{d}\mathbf{x}=1,
\end{equation}
which implies that there is a positive constant $\eta_0$ satisfying
\begin{equation}\label{1.9}
  \int_{B_{\eta_0}}\rho_0\mathrm{d}\mathbf{x}\geq\frac{1}{2}\int_{\mathbb{R}^2}\rho_0\mathrm{d}\mathbf{x}=\frac{1}{2}.
\end{equation}
Moreover, we suppose that there exist constants $\hat{\rho}\geq1$ and $M\geq1$ such that
\begin{align}\label{1.10}
\|\rho_0\|_{L^{\theta}}\leq \hat{\rho}, \ \
\|\bar{x}^\alpha\rho_0\|_{L^{1}}+\|\nabla\mathbf{u}_0\|_{L^2}+\|\nabla\mathbf{B}_0\|_{L^2}\leq M.
\end{align}

Now we state our main results on the global existence of weak solutions.

\begin{theorem}\label{t1.1}
Let the assumptions \eqref{1.6}, \eqref{1.8}, and \eqref{1.10} be satisfied. Then there is a positive constant $\varepsilon$ depending only on $\alpha, \hat{\rho}, M, a, \gamma, \mu, \lambda, \nu$, and $\eta_0$ such that if
\begin{equation}\label{1.11}
  C_0\leq\varepsilon,
\end{equation}
the problem \eqref{a1}--\eqref{a3} admits a global weak solution $(\rho,\mathbf{u},\mathbf{B})$ in the sense of Definition $\ref{d1.1}$ satisfying, for any $0<T<\infty$,
\begin{equation}\label{1.12}
\begin{cases}
0\leq\rho\in L^\infty(0,T; L^{\theta}(\mathbb{R}^2))\cap C([0,T];L^q(\mathbb{R}^2)), \ \ \text{for any} \ q\in[1,\theta),\\
\bar{x}^\alpha\rho\in L^\infty(0,T;L^{1}(\mathbb{R}^2)),\ \
(\sqrt{\rho}\mathbf{u},\mathbf{B})\in C([0,T];L^2(\mathbb{R}^2)),\ \
(\nabla\mathbf{u},\nabla\mathbf{B})\in L^2(\mathbb{R}^2\times(0,T)),\\
\big(\sigma^{\frac{1}{2}}\nabla\mathbf{u},\ \sigma^{\frac{1}{2}}\nabla\mathbf{B}, \ \sigma^{\frac{3}{2}}\sqrt{\rho}\dot{\mathbf{u}}\big)
\in L^\infty(0,T;L^{2}(\mathbb{R}^2)),\
\big(\sigma^{\frac{1}{2}}\sqrt{\rho}\dot{\mathbf{u}},\
\sigma^{\frac{1}{2}}\Delta\mathbf{B},\
\sigma^{\frac{3}{2}}\nabla\dot{\mathbf{u}}\big)\in L^2(\mathbb{R}^2\times(0,T)),
\end{cases}
\end{equation}
with $\sigma=\sigma(t)\triangleq\min\{1,t\}$, and
\begin{equation}\label{1.13}
  \inf_{0\leq t\leq T}\int_{B_{\eta_1(1+t)\log^{\alpha}(e+t)}}\rho(\mathbf{x},t)\mathrm{d}\mathbf{x}\geq\frac14
\end{equation}
for some positive constant $\eta_1$ depending only on $\alpha, \hat{\rho}, M, a, \gamma,\eta_0$, and $\|\sqrt{\rho_0}\mathbf{u}_0\|_{L^2}$.
\end{theorem}

If $\mathbf{B}\equiv\mathbf{B}_0\equiv\mathbf{0}$, Theorem \ref{t1.1} directly yields the following result on the global existence of weak solutions for the isentropic compressible Navier--Stokes system:
\begin{align}\label{b1}
\begin{cases}
\rho_t+\divv(\rho\mathbf{u})=0,\\
(\rho\mathbf{u})_t+\divv(\rho\mathbf{u}\otimes\mathbf{u})+\nabla P=
\mu\Delta\mathbf{u}+(\mu+\lambda)\nabla\divv\mathbf{u},\\
(\rho,\rho\mathbf{u})|_{t=0}=(\rho_0,\rho_0\mathbf{u}_0)(\mathbf{x}),
\end{cases}
\end{align}
with the far-field behavior
\begin{equation}\label{b2}
(\rho_0,\rho_0\mathbf{u}_0)(\mathbf{x})\rightarrow(0,\mathbf{0}),\ \ \ \text{as}\ |\mathbf{x}|\rightarrow\infty.
\end{equation}

\begin{theorem}\label{t1.2}
In addition to \eqref{1.8}, assume that
\begin{align*}
  0\leq\rho_0\in L^{\theta}, \ \ (\bar{x}^\alpha\rho_0, \rho_0|\mathbf{u}_0|^2)\in L^1, \ \ \mathbf{u}_0\in D^1, \ \
\|\rho_0\|_{L^{\theta}}\leq \bar{\rho}, \ \
\|\bar{x}^\alpha\rho_0\|_{L^{1}}+\|\nabla\mathbf{u}_0\|_{L^2}\leq \widetilde{M}
\end{align*}
for some constants $\bar{\rho}\geq1$ and $\widetilde{M}\geq1$.
Then there is a positive constant $\tilde{\varepsilon}$ depending only on $\alpha, \bar{\rho}, \widetilde{M}, a, \gamma, \mu$, $\lambda$, and $\eta_0$ such that if
\begin{equation*}
  C_0\leq\tilde{\varepsilon},
\end{equation*}
the Cauchy problem \eqref{b1}--\eqref{b2} admits a global weak solution $(\rho,\mathbf{u})$ satisfying, for any $0<T<\infty$,
\begin{equation*}
\begin{cases}
0\leq\rho\in L^\infty(0,T; L^{\theta}(\mathbb{R}^2))\cap C([0,T];L^q(\mathbb{R}^2)), \ \ \text{for any} \ q\in[1,\theta),\\
\bar{x}^\alpha\rho\in L^\infty(0,T;L^{1}(\mathbb{R}^2)),\ \
\sqrt{\rho}\mathbf{u}\in C([0,T];L^2(\mathbb{R}^2)),\ \
\nabla\mathbf{u}\in L^2(\mathbb{R}^2\times(0,T)),\\
\big(\sigma^{\frac{1}{2}}\nabla\mathbf{u},\ \sigma^{\frac{3}{2}}\sqrt{\rho}\dot{\mathbf{u}}\big)
\in L^\infty(0,T;L^{2}(\mathbb{R}^2)),\
\big(\sigma^{\frac{1}{2}}\sqrt{\rho}\dot{\mathbf{u}},\
\sigma^{\frac{3}{2}}\nabla\dot{\mathbf{u}}\big)\in L^2(\mathbb{R}^2\times(0,T)),
\end{cases}
\end{equation*}
and
\begin{equation*}
\inf_{0\leq t\leq T}\int_{B_{\hat{\eta}(1+t)\log^{\alpha}(e+t)}}\rho(\mathbf{x},t)\mathrm{d}\mathbf{x}\geq\frac14
\end{equation*}
for some positive constant $\hat{\eta}$ depending only on $\alpha, \bar{\rho}, \widetilde{M}, a, \gamma,\eta_0$, and $\|\sqrt{\rho_0}\mathbf{u}_0\|_{L^2}$.
\end{theorem}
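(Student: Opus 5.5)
Theorem \ref{t1.2} is the special case $\mathbf{B}\equiv\mathbf{0}$ of Theorem \ref{t1.1}, so the plan is to take the MHD result as given and check that everything specializes correctly.

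First, set $\mathbf{B}_0\equiv\mathbf{0}$. Then the hypotheses \eqref{1.6} and \eqref{1.10} hold with $\hat{\rho}=\bar{\rho}$ and $M=\widetilde{M}$, since $\divv\mathbf{B}_0=0$, $\mathbf{B}_0\in D^1$, $|\mathbf{B}_0|^2\in L^1$ and $\|\nabla\mathbf{B}_0\|_{L^2}=0$ all hold trivially. The initial energy $C_0$ in \eqref{1.4} reduces to the Navier--Stokes energy. For an arbitrary fixed resistivity, say $\nu=1$, Theorem \ref{t1.1} gives a threshold $\varepsilon(\alpha,\bar\rho,\widetilde M,a,\gamma,\mu,\lambda,1,\eta_0)$. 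Define $\tilde{\varepsilon}$ to be this number; it then depends only on the listed quantities.

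The real issue is that Theorem \ref{t1.1} produces \emph{some} weak solution $(\rho,\mathbf{u},\mathbf{B})$, and we need to know that $\mathbf{B}\equiv\mathbf{0}$ for it. The cleanest way is to go through the construction rather than uniqueness. The solution in Theorem \ref{t1.1} is obtained as a limit of smooth approximate solutions with mollified data $(\rho_0^\delta,\mathbf{u}_0^\delta,\mathbf{B}_0^\delta)$, and mollifying $\mathbf{B}_0=\mathbf{0}$ gives $\mathbf{B}_0^\delta=\mathbf{0}$. For each smooth approximate solution, the induction equation
\[
\mathbf{B}_t+\mathbf{u}\cdot\nabla\mathbf{B}-\mathbf{B}\cdot\nabla\mathbf{u}+\mathbf{B}\divv\mathbf{u}=\nu\Delta\mathbf{B}
\]
is linear in $\mathbf{B}$ with smooth coefficients. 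Testing it with $\mathbf{B}$ and using Gronwall with $\int_0^T\|\nabla\mathbf{u}\|_{L^\infty}\,dt<\infty$ (valid at the smooth level) gives $\mathbf{B}\equiv\mathbf{0}$. Alternatively, $(\rho,\mathbf{u},\mathbf{0})$ already solves the smooth MHD system, and local uniqueness of strong solutions forces this branch. Either way the approximate solutions solve \eqref{b1}, and so does their limit.

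Once $\mathbf{B}\equiv\mathbf{0}$ is known, the rest is bookkeeping:
\begin{itemize}
\item the weak formulation in Definition \ref{d1.1} becomes the Navier--Stokes weak formulation, because every $\mathbf{B}$ term vanishes;
\item the regularity \eqref{1.12} reduces to the stated class once the $\mathbf{B}$ components are dropped;
\item the mass bound \eqref{1.13} gives the final claim with $\hat{\eta}=\eta_1$, which depends only on the stated parameters.
\end{itemize}

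The only step needing care is confirming that the solution has vanishing magnetic field, and this is the main obstacle. A more robust alternative avoids MHD altogether: rerun the a priori estimates of Theorem \ref{t1.1} with $\mathbf{B}\equiv\mathbf{0}$ and $F=(2\mu+\lambda)\divv\mathbf{u}-P$. Every estimate only loses terms, so the same compactness argument applies verbatim.
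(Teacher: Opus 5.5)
Your proposal is correct and matches the paper, which obtains Theorem \ref{t1.2} simply by setting $\mathbf{B}\equiv\mathbf{B}_0\equiv\mathbf{0}$ in Theorem \ref{t1.1}. Your extra checks make explicit what the paper leaves implicit: fixing $\nu=1$ so that $\tilde{\varepsilon}$ has no $\nu$-dependence, taking $\hat{\eta}=\eta_1$, and noting that the constructed solution really has $\mathbf{B}\equiv\mathbf{0}$ because $\mathbf{B}_0^\epsilon=J_\epsilon*\mathbf{0}=\mathbf{0}$.

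One small fix: the class $C([0,T];D^2\cap D^1)$ of Lemma \ref{l2.1} does not give $\nabla\mathbf{u}\in L^1(0,T;L^\infty)$ in two dimensions. Your Gronwall argument still closes, via $\int|\mathbf{B}|^2|\nabla\mathbf{u}|\,\mathrm{d}\mathbf{x}\le C\|\nabla\mathbf{u}\|_{L^2}\|\mathbf{B}\|_{L^2}\|\nabla\mathbf{B}\|_{L^2}\le\frac{\nu}{2}\|\nabla\mathbf{B}\|_{L^2}^2+C\|\nabla\mathbf{u}\|_{L^2}^2\|\mathbf{B}\|_{L^2}^2$ (as in \eqref{3.35}) together with $\int_0^T\|\nabla\mathbf{u}\|_{L^2}^2\mathrm{d}t<\infty$. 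Alternatively, your uniqueness argument works as stated.
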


Several remarks are in order.
\begin{remark}
It should be noted that the weak solutions constructed in Theorems \ref{t1.1} and \ref{t1.2} fall into an intermediate regularity class, lying between the finite-energy weak solutions in the sense of Lions--Feireisl and the intermediate weak solutions considered in \cite{Hoff95,Hoff95*,Hoff02,SH12,S20}.
In contrast, the presence of far-field vacuum appears to be beyond the scope of the approaches developed in \cite{Hoff02,LYZ13,SH12,S20}, which rely on uniform $L^\infty$ bounds of the density.
Our framework, based on an $L^\theta$ setting, extends their theories to allow for vacuum states both in the interior and at infinity as well as permitting unbounded densities.
\end{remark}

\begin{remark}
We point out that our strategies differ substantially from those of \cite{LX19,LSX16}, where the authors rely heavily on delicate large-time decay estimates to overcome difficulties caused by far-field vacuum:
\cite{LX19} treats the pressure and the velocity gradient, while \cite{LSX16} further requires decay control of $\nabla\mathbf{B}$. In contrast, within the weak solutions framework, any attempt to obtain decay control at spatial infinity is intrinsically delicate and generally unavailable. In view of this, we need to reevaluate our approach and instead make systematic use of the $L^q$ integrability ($1 < q=\theta/\gamma < \infty$) for the pressure $P(\rho)$ after controlling the magnetic field to mitigate the temporal growth of some quantities and capture the far-field behavior. Such mechanism also yields a more flexible treatment of the magnetic field, allowing us to dispense with the additional assumption in \cite{LSX16} that $\|\bar{x}^\alpha|\mathbf{B}_0|^2\|_{L^1}\leq M$.
\end{remark}

\begin{remark}
The index $\theta$ in \eqref{1.7} is a technical parameter arising from the weighted inequality \eqref{2.2} and the large-time analysis in \eqref{3.64}, which reflects a structural feature of our framework based on the modified spatial integrability of the density; see Subsection \ref{sec1.3} for further discussion. In particular, this integrability, transmitted through the pressure, feeds into the velocity estimates and then propagates to the magnetic field through the MHD coupling.
At the same time, the pressure interacts with the magnetic pressure, while magnetic tension and transport effects influence the field dynamics, so that the two subsystems affect each other in an essentially nontrivial way.
\end{remark}

\begin{remark}
In comparison with the whole plane case, the present framework may be expected to be more effective in periodic or bounded domains, where the Poincar\'e inequality and compact embeddings between $L^p$ spaces provide additional analytical advantages.
In fact, as shown in \cite{BB23}, global weak solutions on the torus $\mathbb{T}^N$ ($N=2,3$) have been constructed for compressible flows with an anisotropic viscous stress tensor. It should be emphasized that the structural properties of the magnetic field equation, and particularly the presence of magnetic diffusion, play crucial roles in our analysis. However, in the absence of magnetic diffusion (i.e., $\nu=0$), the problem becomes substantially more delicate: even for the incompressible system, local well-posedness relies on highly technical arguments; see, for instance, \cite{FMRR14,FMRR17}. Addressing these issues in greater generality would require new ideas and are left for future investigation.
\end{remark}

\subsection{Strategy of the proof}\label{sec1.3}

We now outline the main ideas and difficulties of the proof. Our argument is based on constructing global smooth approximate solutions and performing a careful limiting procedure. It proceeds in two steps. First, we invoke a local existence result for initial data with strictly positive density (Lemma \ref{l2.1}). Second, we let the lower bound of the initial density tend to zero (see Section \ref{sec4}). The key challenge is to bridge these steps by establishing \textit{a priori} estimates that are uniform with respect to this lower bound and yield the required regularity.

It should be emphasized that the key techniques developed in \cite{LX19,LSX16} cannot be adopted directly in the present setting. In their strong solution framework, the treatment of far-field vacuum essentially relies on time-decay rates and higher-order estimates for the pressure and gradients of the velocity and magnetic fields.
By contrast, the weak solution framework lacks a decay-sensitive compactness mechanism at spatial infinity so that such decay estimates are generally unavailable.
Consequently, the far-field vacuum problem calls for new observations and ideas.

From the mass equation $\eqref{a1}_1$, we find that, for any $1<\theta<\infty$,
\begin{equation}\label{1.18}
\frac{\mathrm{d}}{\mathrm{d}t}\int\rho^\theta\mathrm{d}\mathbf{x}=-(\theta-1)\int\rho^\theta\divv\mathbf{u}\mathrm{d}\mathbf{x}
=-\frac{\theta-1}{2\mu+\lambda}\int\rho^\theta
\Big(P+F+\frac12|\mathbf{B}|^2\Big)\mathrm{d}\mathbf{x},
\end{equation}
which suggests that the combination
\begin{equation*}
  F+\frac12|\mathbf{B}|^2=(2\mu+\lambda)\divv\mathbf{u}-P(\rho)
\end{equation*}
is fundamental in producing dissipation for the density at the $L^\theta$ level. Motivated by \eqref{1.18}, we work within an $L^\theta$-based integrability framework for the density in order to capture far-field behavior beyond the classical $L^\infty$ setting. One of the key ingredients is the weighted inequalities in Lemma \ref{l2.4} established in our previous work \cite[Lemma 2.4]{WZ26}, which require the nonnegative function $\varrho\in L^1\cap L^\zeta$. Accordingly, we assume $\rho\in L^1\cap L^\theta$ with $\theta\in[\zeta,\infty)$. Although the magnetic equation does not involve the density explicitly, the magnetic field is expected to remain tractable in the new framework provided that its intrinsic structure and resistive diffusion are fully exploited. However, without an $L^\infty$ bound of the density, the nonlinear terms involving $\rho$ or $P(\rho)$ become substantially harder to control, which calls for refined time-weighted estimates and a careful use of the available integrability of the density and the pressure.

The first and foremost difficulty lies in the fact that the density-weighted structure destroys many classical $L^p$-estimates. More precisely, the inequality \eqref{2.2} and Lemma \ref{l3.5} indicate that the upper bound of $\|\rho\dot{\mathbf{u}}\|_{L^p}$ ($2<p<\infty$) may grow polynomially in time. To obtain global-in-time control of the effective viscous flux $F$, we rely crucially on the velocity decomposition
$\mathbf{u}=\mathbf{w}_1+\mathbf{w}_2+\mathbf{w}_3$ given in Lemma \ref{l2.6}:
\begin{equation*}
\|F\|_{L^p}\lesssim \|\nabla\mathbf{u}\|_{L^p}+\cdots\lesssim \|\nabla\mathbf{w}_1\|_{L^p}+\cdots,
\end{equation*}
together with the Sobolev embedding in two dimensions:
\begin{equation*}
\|F\|_{L^6}\lesssim\|\nabla F\|_{L^\frac32}, \ \
 \|\nabla\mathbf{w}_1\|_{L^p}\lesssim
 \|\nabla^2\mathbf{w}_1\|_{L^{\frac{2p}{2+p}}}\lesssim \|\rho\dot{\mathbf{u}}\|_{L^{\frac{2p}{2+p}}}
 \lesssim \|\sqrt{\rho}\|_{L^p}\|\sqrt{\rho}\dot{\mathbf{u}}\|_{L^2}.
\end{equation*}
Nevertheless, these less tractable estimates inevitably lead to higher powers of $\|\sqrt{\rho}\dot{\mathbf{u}}\|_{L^2}$, which in turn require a more delicate time-weighted energy analysis with $\sigma=\sigma(t)\triangleq\min\{1,t\}$.
In particular, for integral terms such as
\begin{equation*}
\int_0^T\int\sigma P|\nabla\mathbf{u}|^2\mathrm{d}\mathbf{x}\mathrm{d}t,
\end{equation*}
we split the time interval into an initial layer and a long-time regime, and carefully isolate the singular behavior of the velocity gradient near the initial time using $L^6$-based bounds (see \eqref{3.23} and \eqref{3.24}).
This procedure allows us to reduce the analysis to controlling
\begin{equation*}
\int_{0}^{T}\int\sigma^3P^4\mathrm{d}\mathbf{x}\mathrm{d}t,
\end{equation*}
which can be bounded by the estimates in \eqref{3.45} once the effective viscous flux and the  magnetic contributions have been treated. Notably, a key mechanism in the nonlinear analysis is to exploit the coupling
$\mathbf{u}\cdot\nabla\mathbf{B}-\mathbf{B}\cdot\nabla\mathbf{u}$ in $\eqref{a1}_3$, which allows us to avoid a direct reliance on the $L^2$-norm
of the velocity alone. Achieving this relies essentially on the presence of magnetic diffusion term $\nu\Delta\mathbf{B}$ (see \eqref{3.33}--\eqref{3.36}).

Moreover, establishing any decay control at spatial infinity within the weak solution framework is particularly delicate. This difficulty stems from the limited strength of the available \textit{a priori} bounds, such as the crude control of $\|\nabla\mathbf{u}\|_{L^p}$ in \eqref{2.4}, together with the lack of higher-order estimates that would yield decay. In fact, arguing as in Lemma \ref{l2.4} and assuming $\varrho\in L^1\cap L^{\zeta}$, we obtain the growth bound of the density-weighted material derivative (see \eqref{2.2} and \eqref{3.61}):
\begin{equation*}
\|\nabla F\|_{L^4}\lesssim \|\rho\dot{\mathbf{u}}\|_{L^4}+\||\mathbf{B}||\nabla\mathbf{B}|\|_{L^4}\lesssim (1+t)^4\cdots.
\end{equation*}
Given the current tools at our disposal, the only usable information is the $L^\theta$ bound on the density. To capture the far-field behavior, we exploit the spatial integrability of the pressure and the magnetic field to
reduce the time growth (see \eqref{ef1}--\eqref{ef5}). Moreover, the interpolation inequality in Lemma \ref{l2.2} yields
\begin{equation*}
\|F\|_{L^\infty} \lesssim
\|\nabla F\|_{L^4}^{\frac{3}{16}}
\|F\|_{L^{\frac{52}{3}}}^{\frac{13}{16}}
\lesssim
\|\nabla F\|_{L^4}^{\frac{3}{16}}
\|\nabla F\|_{L^{\frac{52}{29}}}^{\frac{13}{16}}
 \lesssim
(1+t)^\frac34
\Big(\|\sqrt{\rho}\|_{L^{\frac{52}{3}}}^{\frac{13}{16}}
\|\sqrt{\rho}\dot{\mathbf{u}}\|_{L^2}^{\frac{13}{16}}
+\||\mathbf{B}||\nabla\mathbf{B}|\|_{L^{\frac{52}{29}}}^{\frac{13}{16}}\Big)\cdots.
\end{equation*}
The improved growth rate is now compatible with the time-weighted estimates (see \eqref{3.64}). Accordingly, the refined initial density integrability index $\theta$ is chosen so that
\begin{equation*}
\theta\geq\max\big\{\zeta=4(2\alpha+1)/(\alpha-1), \ 6\gamma, \ 26/3\big\}.
\end{equation*}
In the present work, we take $\theta=\zeta\gamma\in(20\gamma,\infty)$ for $\alpha\in(1,2)$.

Having resolved the above difficulties, we further apply Zlotnik's inequality (see Lemma \ref{lzlo}) to obtain a time-uniform upper bound for $\|\rho\|_{L^\theta}$. A key step is the global control of $\|F\|_{L^\infty}$ and $\|\mathbf{B}\|_{L^\infty}$ (see \eqref{3.62} and \eqref{3.64}), where the density-weighted estimate \eqref{2.2} and the spatial-weighted bound in Lemma \ref{l3.5} play central roles. With these \textit{a priori} estimates at hand, we deduce the desired regularity of the density, which is not covered by the Lions--Feireisl class of weak solutions. Finally, combining a standard compactness argument with the continuity method, we construct a global weak solution in the sense of Definition \ref{d1.1} satisfying \eqref{1.12}.

The rest of the paper is organized as follows. In the next section, we recall several preliminary results and basic inequalities that will be used throughout the paper. Section \ref{sec3} is devoted to deriving the necessary \textit{a priori} estimates. Finally, Section \ref{sec4} contains the proof of Theorem \ref{t1.1}.

\section{Preliminaries}\label{sec2}

In this section we collect some facts and elementary inequalities that will be used later.

\subsection{Auxiliary results and inequalities}

In this subsection we review some known facts and inequalities. First, similar to the proofs in \cite{MN80,LH15}, we have the following result concerning the local existence of strong solutions to the problem \eqref{a1}--\eqref{a3}.
\begin{lemma}\label{l2.1}
Assume that
\begin{equation*}
 \bar{x}^\alpha\rho_0\in L^1,\ \rho_0\in H^{2},\
(\mathbf{u}_0,\mathbf{B}_0)\in D^2\cap D^1, \ (\sqrt{\rho_0}\mathbf{u}_0, \mathbf{B}_0)\in L^2, \ \divv\mathbf{B}_0=0, \
 \inf\limits_{\mathbf{x}\in B_{R}}\rho_0(\mathbf{x})>0
\end{equation*}
for some large ball $B_{R}$, then there exists a small time $T$ such that the problem \eqref{a1}--\eqref{a3} admits a unique strong solution $(\rho,\mathbf{u},\mathbf{B})$ satisfying
\begin{equation*}
\bar{x}^\alpha\rho \in L^\infty(0,T; L^{1}),\ \
\rho\in C([0,T]; H^{2}),\
(\mathbf{u},\mathbf{B})\in C([0,T]; D^{2}\cap D^1), \ \inf_{B_{R}\times[0,T]}\rho(\mathbf{x},t)
\geq \frac12 \inf\limits_{\mathbf{x}\in B_{R}}\rho_0(\mathbf{x})
>0.
\end{equation*}
\end{lemma}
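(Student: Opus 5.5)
The plan is the standard linearization and contraction scheme of Matsumura--Nishida \cite{MN80}, in the two-dimensional far-field-vacuum form of \cite{LH15}, with the magnetic field carried along as a parabolic unknown. The key structural point is that the density is bounded below only on the ball $B_R$. The momentum equation is therefore never divided by $\rho$ globally. Instead we treat
\[
\rho\mathbf{u}_t-\mu\Delta\mathbf{u}-(\mu+\lambda)\nabla\divv\mathbf{u}=\cdots
\]
as a Lamé system with a degenerate time-derivative coefficient. We control $\mathbf{u}$ through $\|\sqrt{\rho}\mathbf{u}_t\|_{L^2}$, $\|\nabla\mathbf{u}\|_{H^1}$ and a weighted $L^2$ bound of $\mathbf{u}$. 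The weight comes from the positivity on $B_R$ combined with the weighted inequality of Lemma~\ref{l2.4} (\cite[Lemma 2.4]{WZ26}), which gives $\|\mathbf{u}\bar{x}^{-\eta}\|_{L^p}\lesssim \|\sqrt{\rho}\mathbf{u}\|_{L^2}+\|\nabla\mathbf{u}\|_{L^2}$ once $\int_{B_R}\rho\ge c>0$.

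\textbf{Step 1 (linearized problem).} Fix a smooth velocity $\mathbf{v}$ and magnetic field $\mathbf{H}$ in the expected class with $\mathbf{v}(0)=\mathbf{u}_0$ and $\mathbf{H}(0)=\mathbf{B}_0$.
\begin{itemize}
\item Solve the transport equation $\rho_t+\divv(\rho\mathbf{v})=0$ by characteristics. This gives $\rho\in C([0,T];H^2)$ with $\|\rho\|_{H^2}\le \|\rho_0\|_{H^2}\exp(C\int_0^t\|\nabla\mathbf{v}\|_{H^2})$.
\item The lower bound $\inf_{B_R}\rho\ge\frac12\inf_{B_R}\rho_0$ for small $T$ follows from $\|\rho_t\|_{L^\infty}\lesssim\|\mathbf{v}\|_{W^{1,\infty}}\|\rho\|_{W^{1,\infty}}$. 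It uses the Sobolev embedding of $H^2$ together with a local $L^\infty$ bound of $\mathbf{v}$ on $B_{2R}$.
\item The weight $\bar{x}^\alpha\rho$ is propagated by multiplying the mass equation by $\bar{x}^\alpha$ and using $|\nabla\bar{x}^\alpha|\lesssim\bar{x}^{\alpha}\bar{x}^{-1}\log^{2}(e+|\mathbf{x}|^2)$. This yields
\[
\frac{d}{dt}\|\bar{x}^\alpha\rho\|_{L^1}\lesssim\|\mathbf{v}\bar{x}^{-1}\log^2\|_{L^\infty}\|\bar{x}^\alpha\rho\|_{L^1}.
\]
\item Solve the induction equation for $\mathbf{B}$ with transport and stretching by $\mathbf{v}$, which is uniformly parabolic with coefficient $\nu$. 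Solve the linear Lamé-type problem $\rho\mathbf{u}_t+\rho\mathbf{v}\cdot\nabla\mathbf{u}-\mathcal{L}\mathbf{u}=-\nabla P+\mathbf{H}\cdot\nabla\mathbf{H}-\frac12\nabla|\mathbf{H}|^2$.
\end{itemize}
For the Lamé problem, first add $\delta\mathbf{u}_t$ to make it uniformly parabolic. Then derive $\delta$-independent estimates by testing against $\mathbf{u}_t$ and differentiating in time. Finally use elliptic regularity for $\mathcal{L}$ to recover $\nabla^2\mathbf{u}$ from $\rho\mathbf{u}_t$, and let $\delta\to0$.

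\textbf{Step 2 (a priori bounds and contraction).} Show that the map $(\mathbf{v},\mathbf{H})\mapsto(\mathbf{u},\mathbf{B})$ sends a ball in
\[
\{\sup_t(\|\nabla\mathbf{u}\|_{H^1}+\|\sqrt{\rho}\mathbf{u}\|_{L^2}+\|\mathbf{B}\|_{H^2})+\int_0^T(\|\sqrt{\rho}\mathbf{u}_t\|_{L^2}^2+\|\nabla\mathbf{u}_t\|_{L^2}^2)\}
\]
into itself for small $T$. Then show it contracts in the lower norm built from $\|\sqrt{\rho}(\mathbf{u}-\bar{\mathbf{u}})\|_{L^2}$, $\|\rho-\bar\rho\|_{L^2}$ and $\|\mathbf{B}-\bar{\mathbf{B}}\|_{L^2}$, with gradients in $L^2_tL^2_x$. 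Iterate to a fixed point. Uniqueness follows from the same difference estimate together with Gronwall's inequality. In all these estimates, any occurrence of $\mathbf{u}$ without a gradient is multiplied by $\rho$ or by a weight $\bar{x}^{-\eta}$. It is controlled by the weighted Sobolev inequality above and by $\rho\bar{x}^{\alpha}\in L^1$, for example $\|\rho^{1/2}\mathbf{u}\|_{L^4}\lesssim\|\rho\bar x^{\alpha}\|_{L^1}^{\cdot}\cdots$.

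\textbf{Main obstacle.} The hardest step is the weighted control of $\mathbf{u}$ itself in the far field. This is needed both for the transport of $\rho$ (through $\|\mathbf{v}\bar{x}^{-1}\log^2\|_{L^\infty}$) and for the convection terms $\rho\mathbf{v}\cdot\nabla\mathbf{u}$, since in two dimensions $D^1$ does not embed into any $L^p$. My first approach would be to keep $\int_{B_{R}}\rho\ge c$ uniformly on $[0,T]$, which follows from the positive lower bound. Then I would apply Lemma~\ref{l2.4} at each time, using $\bar x^\alpha$ with $\alpha>1$ to absorb the $\log^2$ factors. The compatibility condition at $t=0$, namely $\sqrt{\rho_0}\mathbf{u}_t(0)\in L^2$, is not assumed. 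The $H^2$ estimates are therefore instead obtained with time weight or by a short-time elliptic argument, as in \cite{LH15}.
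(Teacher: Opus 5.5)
The paper gives no argument for this lemma beyond the pointer to \cite{MN80,LH15}, so your linearize-and-iterate plan is in the intended spirit. As a proof, however, it has a genuine gap, located at the compatibility issue you raise at the end and then set aside.

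Your transport bound $\|\rho\|_{H^2}\le\|\rho_0\|_{H^2}\exp\big(C\int_0^t\|\nabla\mathbf{v}\|_{H^2}\mathrm{d}s\big)$ needs $\nabla\mathbf{v}\in L^1(0,T;H^2)$. This is not an artifact: the term $\rho\,\nabla^2\divv\mathbf{v}$ in the equation for $\nabla^2\rho$ forces third derivatives of $\mathbf{v}$, at least on $B_R$ where $\rho$ is bounded below. By the Lam\'e system, $\|\nabla^3\mathbf{u}\|_{L^2}\lesssim\|\nabla(\rho\mathbf{u}_t)\|_{L^2}+\cdots$ with $\nabla(\rho\mathbf{u}_t)=\rho\nabla\mathbf{u}_t+\nabla\rho\otimes\mathbf{u}_t$. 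So this reduces to $\nabla\mathbf{u}_t\in L^1(0,T;L^2)$, plus a growing weight on $\nabla\rho$, since $\mathbf{u}_t$ is only controlled against $\bar{x}^{-\eta}$ and $\bar{x}^\alpha\rho_0\in L^1$ puts no weight on $\nabla\rho_0$. Likewise, $\sup_t\|\nabla^2\mathbf{u}\|_{L^2}$ goes through $\|\rho\mathbf{u}_t\|_{L^2}\le\|\rho\|_{L^\infty}^{1/2}\|\sqrt{\rho}\mathbf{u}_t\|_{L^2}$.

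Your ball simply postulates $\sup_t\|\nabla^2\mathbf{u}\|_{L^2}$ and $\int_0^T\|\nabla\mathbf{u}_t\|_{L^2}^2\mathrm{d}t$. Bounds of this kind that are uniform in $\delta$ come from the time-differentiated energy estimate, and that estimate needs $\sqrt{\rho_0}\,\mathbf{u}_t(0)\in L^2$, i.e.\ a compatibility condition. The estimate that would use only $\mathbf{u}_0\in D^2$ is testing with $\mathcal{L}\mathbf{u}_t$. But its dissipation is $\int\rho|\nabla\mathbf{u}_t|^2$, and its commutator involves $\nabla\rho\otimes\mathbf{u}_t$; this cannot be absorbed where $\rho$ is small, i.e.\ outside $B_R$.

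Your fallback of time weights gives only $\sup_t t\|\sqrt{\rho}\mathbf{u}_t\|_{L^2}^2$ and $\int_0^T t\|\nabla\mathbf{u}_t\|_{L^2}^2\mathrm{d}t$. Hence you get $\|\nabla^2\mathbf{u}(t)\|_{L^2}\lesssim t^{-1/2}$ and no bound on $\int_0^T\|\nabla\mathbf{u}_t\|_{L^2}\mathrm{d}t$, since Cauchy--Schwarz leaves $\int_0^T t^{-1}\mathrm{d}t$. So neither $\mathbf{u}\in C([0,T];D^2\cap D^1)$ nor the third-derivative bound needed for $\rho\in C([0,T];H^2)$ follows; time-weighted local theories propagate the density only in $W^{1,q}$-type spaces.

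This is more than bookkeeping. The hypotheses allow $\rho_0$ to vanish on an open set $U$ outside $B_R$. Since $\rho$ is transported, for small $t>0$ it still vanishes on any fixed compact $U'\subset U$. There the momentum equation reduces to $\mathcal{L}\mathbf{u}=-\mathbf{B}\cdot\nabla\mathbf{B}+\frac12\nabla|\mathbf{B}|^2$. With $\mathbf{u}(0)=\mathbf{u}_0$, as in your scheme, continuity in $D^2$ at $t=0$ would force $\mathcal{L}\mathbf{u}_0=-\mathbf{B}_0\cdot\nabla\mathbf{B}_0+\frac12\nabla|\mathbf{B}_0|^2$ in $U$, which is not assumed.

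Closing the argument therefore needs an extra ingredient. One option is a compatibility condition on the data. The other is a genuinely non-degenerate approximate problem: for instance, posed on $B_R$ with Dirichlet data, where the density is bounded below on the whole domain and uniformly parabolic theory gives $\mathbf{u}\in L^2(0,T;D^3)$ from $D^2$ data. In the latter case, though, the whole-space limit comes out only in a time-weighted class.

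Two smaller slips. First, $H^2(\mathbb{R}^2)\not\hookrightarrow W^{1,\infty}$, so $\|\rho_t\|_{L^\infty}\lesssim\|\mathbf{v}\|_{W^{1,\infty}}\|\rho\|_{W^{1,\infty}}$ is unavailable. Get the lower bound on $B_R$ along characteristics instead, using:
\begin{itemize}
\item $\divv\mathbf{v}\in L^1(0,T;L^\infty)$;
\item the continuity of $\rho_0$, so that $\rho_0\ge\frac34\inf_{B_R}\rho_0$ on a slightly larger ball;
\item short-time smallness of particle displacements.
\end{itemize}
Second, an unweighted $\|\rho-\bar{\rho}\|_{L^2}$ cannot close the contraction in two dimensions. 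The term $(\rho-\bar{\rho})(\bar{\mathbf{u}}_t+\bar{\mathbf{u}}\cdot\nabla\bar{\mathbf{u}})\cdot(\mathbf{u}-\bar{\mathbf{u}})$ pairs velocity factors that are controlled only against $\bar{x}^{-\eta}$, so $\rho-\bar{\rho}$ must carry a compensating growing weight.
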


The following well-known Gagliardo--Nirenberg inequality (see \cite{NI1959}) will be used later.
\begin{lemma}\label{l2.2}
For $p\in [2, \infty)$, $q\in(1, \infty)$, and $r\in (2, \infty)$, there exists some generic constant $C>0$ which may depend on $p$, $q$, and $r$ such that, for $f\in H^1$ and $g\in L^q\cap D^{1,r}$,
\begin{gather*}
\|f\|_{L^p}\leq C\|f\|_{L^2}^\frac{2}{p}\|\nabla f\|_{L^2}^{1-\frac{2}{p}},\ \ \
\|g\|_{L^\infty}\leq C\|g\|_{L^q}^\frac{q(r-2)}{2r+q(r-2)}\|\nabla g\|_{L^r}^\frac{2r}{2r+q(r-2)}.
\end{gather*}
\end{lemma}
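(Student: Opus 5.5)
The first inequality is the standard two-dimensional Gagliardo--Nirenberg (Ladyzhenskaya-type) estimate. I would argue by density: prove it for $f\in C_c^\infty(\mathbb{R}^2)$ and then extend to $H^1$ by approximation, since both sides are continuous in the $H^1$ norm.

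For smooth compactly supported $f$ there are two natural routes. One is to apply the $L^1$ Sobolev inequality $\|g\|_{L^2}\le C\|\nabla g\|_{L^1}$ to $g=|f|^{p/2}$. Then
\[
\|f\|_{L^p}^{p/2}\le C\int |f|^{p/2-1}|\nabla f|\,\mathrm{d}\mathbf{x}\le C\|f\|_{L^{p-2}}^{p/2-1}\|\nabla f\|_{L^2},
\]
and one iterates or interpolates. The cleaner route is the sharp Sobolev embedding $\dot H^{s}\hookrightarrow L^p$ with $s=1-\frac2p$, combined with the interpolation
\[
\|f\|_{\dot H^s}\le \|f\|_{L^2}^{1-s}\|\nabla f\|_{L^2}^{s},
\]
which follows from Plancherel and H\"older in frequency. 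This gives exactly the exponents $\frac2p$ and $1-\frac2p$. As a consistency check, the exponents are also forced by scaling $f(\lambda\cdot)$.

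For the second inequality I would use the Morrey-type local estimate, valid for $r>2$ and any ball $B_R(\mathbf{x}_0)$:
\[
|g(\mathbf{x}_0)-\bar g_{B_R}|\le C R^{1-\frac2r}\|\nabla g\|_{L^r}.
\]
Combining it with the H\"older bound $|\bar g_{B_R}|\le C R^{-\frac2q}\|g\|_{L^q}$ yields
\[
|g(\mathbf{x}_0)|\le C\big(R^{-2/q}\|g\|_{L^q}+R^{1-2/r}\|\nabla g\|_{L^r}\big).
\]
Optimizing in $R$ (set the two terms equal) gives $R^{\frac2q+1-\frac2r}=\|g\|_{L^q}/\|\nabla g\|_{L^r}$. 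Substituting back, the exponent on $\|g\|_{L^q}$ is
\[
\frac{1-2/r}{2/q+1-2/r}=\frac{q(r-2)}{2r+q(r-2)},
\]
and the complementary exponent is $\frac{2r}{2r+q(r-2)}$, as claimed.

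The only mildly delicate point is that $g\in L^q\cap D^{1,r}$ need not be smooth. Before optimizing, I would justify the pointwise estimate by mollification and pass to the limit, using that $g$ is continuous by Morrey's theorem.
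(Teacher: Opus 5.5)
Your proposal is correct. The paper gives no proof of this lemma: it quotes both estimates as special cases of Nirenberg's general interpolation inequality \cite{NI1959}, taking $j=0$, $m=1$, $n=2$. The relation $\frac1p=\frac{1-a}{2}$ gives $a=1-\frac2p$ for the first estimate. The relation $0=a\big(\frac1r-\frac12\big)+\frac{1-a}{q}$ gives $a=\frac{2r}{2r+q(r-2)}$ for the second.

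You instead derive both estimates from elementary ingredients. For the first, you use Plancherel with H\"older in frequency together with $\dot H^{1-2/p}\hookrightarrow L^p$. For the second, you use Morrey's oscillation bound on $B_R(\mathbf{x}_0)$, H\"older on the average, and optimization in $R$. The citation buys brevity and the whole family of exponents at once. Your route buys a self-contained argument in which the exponents are visibly forced by balancing the two terms (equivalently by scaling), and it makes transparent why $r>2$ is needed.

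A few loose ends to tidy:
\begin{itemize}
\item If $\|\nabla g\|_{L^r}=0$, the choice of $R$ is undefined. Letting $R\to\infty$ in $|g(\mathbf{x}_0)|\le C\big(R^{-2/q}\|g\|_{L^q}+R^{1-2/r}\|\nabla g\|_{L^r}\big)$ gives $g\equiv0$, so the inequality holds trivially.
\item In your first (alternative) route, for $p\in(2,4)$ you cannot interpolate $L^{p-2}$ between $L^2$ and $L^p$. Instead, interpolate $L^p$ between $L^2$ and $L^4$ and use the case $p=4$.
\item The claim that ``both sides are continuous in $H^1$'' presupposes the embedding being proved. This is harmless: the Fourier argument applies directly to $f\in H^1$, or you can use Fatou along an a.e.\ convergent subsequence.
\end{itemize}
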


The following weighted $L^p$-estimates for functions in $D^1(\mathbb{R}^2)$ are stated in
\cite[Theorem B.1]{PL96}.
\begin{lemma}\label{l2.3}
For $m\in[2,\infty)$ and $\beta\in(1+m/2,\infty)$, there exists a positive constant $C$ such that for all $\mathbf{v}\in D^{1}(\mathbb{R}^2)$,
\begin{equation*}
\left(\int_{\mathbb{R}^2}\frac{|\mathbf{v}|^m}{e+|\mathbf{x}|^2}(\log(e+|\mathbf{x}|^2))^{-\beta}\mathrm{d}\mathbf{x}\right)^{1/m}
\leq C\|\mathbf{v}\|_{L^2(B_1)}+C\|\nabla\mathbf{v}\|_{L^2(\mathbb{R}^2)}.
\end{equation*}
\end{lemma}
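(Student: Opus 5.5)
The plan is to prove Lemma \ref{l2.3} by reducing it to a one-dimensional Hardy-type inequality in the radial variable and then integrating over the angle, with the ball $B_1$ serving as the anchor. By density we may take $\mathbf{v}$ smooth. We split $\mathbb{R}^2=B_1\cup(\mathbb{R}^2\setminus B_1)$. On $B_1$ the weight is bounded above and below, and by Sobolev embedding on the unit disc ($H^1(B_1)\hookrightarrow L^m(B_1)$ for every finite $m$) we get $\|\mathbf{v}\|_{L^m(B_1)}\le C(\|\mathbf{v}\|_{L^2(B_1)}+\|\nabla\mathbf{v}\|_{L^2})$. So the whole difficulty is the exterior region.

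For the exterior, the first step is to subtract the mean: let $\bar v$ be the average of $\mathbf{v}$ over $B_1$ (or over the annulus $B_2\setminus B_1$). Since $\int_{|\mathbf{x}|>1}(e+|\mathbf{x}|^2)^{-1}\log^{-\beta}(e+|\mathbf{x}|^2)\,\mathrm{d}\mathbf{x}<\infty$ for $\beta>1$, the constant part contributes $C|\bar v|\le C\|\mathbf{v}\|_{L^2(B_1)}$. It then remains to bound $\mathbf{w}=\mathbf{v}-\bar v$, which satisfies a Poincar\'e inequality on the annulus. To handle $\mathbf{w}$, I would decompose $\{|\mathbf{x}|>1\}$ into dyadic annuli $A_k=\{2^{k}<|\mathbf{x}|<2^{k+1}\}$. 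On each $A_k$, the scaled Sobolev inequality gives
\[
\|\mathbf{w}-\bar w_k\|_{L^m(A_k)}\le C2^{2k/m}\|\nabla\mathbf{w}\|_{L^2(A_k)},
\]
where $\bar w_k$ is the average on $A_k$. Moreover, consecutive averages differ by at most $C\|\nabla\mathbf{w}\|_{L^2(A_k\cup A_{k+1})}$, which is scale-invariant in two dimensions. Hence $|\bar w_k|\le C\sum_{j\le k}\|\nabla \mathbf{w}\|_{L^2(A_j)}\le C\sqrt{k}\,\|\nabla\mathbf{v}\|_{L^2}$ by Cauchy--Schwarz. This is the logarithmic growth typical of $D^1(\mathbb{R}^2)$.

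Now we sum. The weight on $A_k$ is comparable to $2^{-2k}k^{-\beta}$, and $|A_k|\sim 2^{2k}$. The fluctuation part gives $\sum_k 2^{-2k}k^{-\beta}2^{2k}\|\nabla\mathbf{w}\|_{L^2(A_k)}^m\le C\|\nabla\mathbf{v}\|_{L^2}^m$, since $\sum_k a_k^{m}\le(\sum a_k^2)^{m/2}$ for $m\ge2$. The mean part gives $\sum_k k^{-\beta}\big(\sqrt{k}\,\|\nabla\mathbf{v}\|_{L^2}\big)^m=\|\nabla\mathbf{v}\|_{L^2}^m\sum_k k^{m/2-\beta}$. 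This sum converges exactly when $\beta>1+m/2$, which explains the hypothesis. Taking $m$-th roots and combining with the interior estimate yields the claim.

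The main obstacle is the bookkeeping of the averages $\bar w_k$: one must get the sharp $\sqrt{k}$ growth rather than a cruder $k$ bound. Otherwise the threshold would be $\beta>1+m$ instead of $\beta>1+m/2$. Cauchy--Schwarz over the telescoping chain, with the scale invariance of $\|\nabla\cdot\|_{L^2}$ in 2D, is the key point.
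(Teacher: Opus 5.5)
Your argument is correct, and it necessarily takes a different route from the paper, which gives no proof of this lemma at all and simply imports it from Lions (Theorem B.1 of the cited monograph). Your dyadic scheme is a complete, elementary replacement for that citation. It has four ingredients:
\begin{itemize}
\item Sobolev--Poincar\'e on the fixed annulus $\{1<|\mathbf{x}|<2\}$, transported to $A_k$ using the two-dimensional scale invariance of $\|\nabla\cdot\|_{L^2}$. The resulting factor $2^{2k/m}$ exactly cancels the weight $(e+|\mathbf{x}|^2)^{-1}\sim 2^{-2k}$ on $A_k$.
\item The embedding $\ell^2\subset\ell^m$ ($m\ge2$), which controls the sum of the fluctuations.
\item Cauchy--Schwarz along the telescoping chain of annular averages, which gives $|\bar w_k|\le C\sqrt{k+1}\,\|\nabla\mathbf{v}\|_{L^2}$.
\item Summation of the mean part, which is bounded by $\|\nabla\mathbf{v}\|_{L^2}^m\sum_k (k+1)^{m/2-\beta}$.
\end{itemize}
What your route buys over the citation is transparency. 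It shows that the hypothesis $\beta>1+m/2$ is precisely the price of the $\sqrt{\log|\mathbf{x}|}$ growth allowed for $D^1(\mathbb{R}^2)$ functions. Testing with $\mathbf{v}\sim(\log|\mathbf{x}|)^s$ at infinity, $s<1/2$, also shows the inequality fails for every $\beta<1+m/2$. The citation, in exchange, keeps the paper short.

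A few cosmetic repairs are needed.
\begin{itemize}
\item Your opening sentence announces a radial one-dimensional Hardy inequality integrated over angles. That is not what you actually do; the executed argument is the dyadic one.
\item Write $(k+1)^{-\beta}$ and $\sqrt{k+1}$ instead of $k^{-\beta}$ and $\sqrt{k}$, so that the $k=0$ annulus is handled.
\item Make the anchor of the chain explicit. Since $\mathbf{w}$ has zero mean on $B_1$, Poincar\'e on $B_2$ gives $|\bar w_0|\le C\|\nabla\mathbf{v}\|_{L^2(B_2)}$.
\item If you instead subtract the mean over $B_2\setminus B_1$, the constant part is bounded by $C\|\mathbf{v}\|_{L^2(B_1)}+C\|\nabla\mathbf{v}\|_{L^2}$, not by $C\|\mathbf{v}\|_{L^2(B_1)}$ alone. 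This is still enough for the lemma.
\item The density reduction is harmless but unnecessary. Every step uses only Poincar\'e and Sobolev on bounded annuli and balls, which hold for $H^1_{\loc}$ functions, and $D^1(\mathbb{R}^2)\subset H^1_{\loc}$.
\end{itemize}
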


As a direct consequence of Lemma \ref{l2.3} and Poincar\'e's inequality, we obtain the following estimates for $\|\mathbf{v}\|_{L^2(B_{\eta_*})}$ and $\|\varrho\mathbf{v}\|_{L^4(\mathbb{R}^2)}$ with a nonnegative function $\varrho \in L^1(\mathbb{R}^2)\cap L^\zeta(\mathbb{R}^2)$, whose proof can be adapted from our previous work \cite[Lemma 2.4]{WZ26} on the half-plane case.
\begin{lemma}\label{l2.4}
Let $\zeta=4(2\alpha+1)/(\alpha-1)$ with $\alpha\in(1,\infty)$ and $\bar{x}$ be as in \eqref{1.7}. Assume that the nonnegative function $\varrho\in L^1(\mathbb{R}^2)\cap L^\zeta(\mathbb{R}^2)$ satisfies
\begin{equation*}
M_2\leq \int_{B_{\eta_*}}\varrho\mathrm{d}\mathbf{x}\leq
\|\varrho\|_{L^1(\mathbb{R}^2)}=M_1,\ \ \|\varrho\|_{L^\zeta(\mathbb{R}^2)}\leq M_3,\ \
  \bar{x}^\alpha\varrho\in L^1(\mathbb{R}^2)
\end{equation*}
for positive constants $M_1, M_2, M_3$, and $\eta_*\geq1$. Then there exists a constant $C>0$ depending only on $M_1$, $M_2$, and $M_3$ such that, for $\mathbf{v}\in D^{1}(\mathbb{R}^2)$ verifying $\sqrt{\varrho}\mathbf{v}\in L^2(\mathbb{R}^2)$,
\begin{align}\label{2.1}
  \|\mathbf{v}\|_{L^2(B_{\eta_*})}^2&\leq C\eta_*^4
  \big(\|\sqrt{\varrho}\mathbf{v}\|_{L^2(B_{\eta_*})}^2+
  \|\nabla\mathbf{v}\|_{L^2(B_{\eta_*})}^2),
   \\
  \|\varrho\mathbf{v}\|_{L^4(\mathbb{R}^2)}^4
   &\leq
  C\eta_*^8(1+\|\bar{x}^\alpha\varrho\|_{L^{1}(\mathbb{R}^2)})
 \big(\|\sqrt{\varrho}\mathbf{v}\|_{L^2(\mathbb{R}^2)}^4+\|\nabla\mathbf{v}\|_{L^2(\mathbb{R}^2)}^4 \big).\label{2.2}
\end{align}
\end{lemma}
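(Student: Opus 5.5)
The plan is to prove \eqref{2.1} by a Poincar\'e-type argument on $B_{\eta_*}$, and then \eqref{2.2} by splitting $\varrho^4|\mathbf{v}|^4$ into the weight $\bar{x}^{\alpha}\varrho$ and a weighted $L^m$ norm of $\mathbf{v}$ controlled by Lemma \ref{l2.3}.

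\emph{Proof of \eqref{2.1}.} Let $\bar v$ be the average of $\mathbf{v}$ over $B_{\eta_*}$. Poincar\'e's inequality on the ball gives
\[
\|\mathbf{v}-\bar v\|_{L^2(B_{\eta_*})}\le C\eta_*\|\nabla\mathbf{v}\|_{L^2(B_{\eta_*})},
\]
so it suffices to bound $|\bar v|$. Integrating against $\varrho$ yields
\[
M_2|\bar v|\le \int_{B_{\eta_*}}\varrho|\mathbf{v}|\,\mathrm{d}\mathbf{x}+\int_{B_{\eta_*}}\varrho|\mathbf{v}-\bar v|\,\mathrm{d}\mathbf{x}.
\]
The first term is at most $M_1^{1/2}\|\sqrt\varrho\mathbf{v}\|_{L^2(B_{\eta_*})}$. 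For the second, apply H\"older with $\|\varrho\|_{L^{\zeta}}$ and $\|\mathbf{v}-\bar v\|_{L^{\zeta'}(B_{\eta_*})}$. By scaled Sobolev--Poincar\'e, the latter is bounded by $C\eta_*^{2/\zeta'}\|\nabla\mathbf{v}\|_{L^2(B_{\eta_*})}$, and $\zeta'<2$. Hence
\[
|\bar v|\le C\big(\|\sqrt\varrho\mathbf{v}\|_{L^2(B_{\eta_*})}+\eta_*\|\nabla \mathbf{v}\|_{L^2(B_{\eta_*})}\big).
\]
Multiplying by $|B_{\eta_*}|^{1/2}\sim\eta_*$ gives \eqref{2.1}, with room to spare relative to the stated factor $\eta_*^4$ (which one can simply accept, using $\eta_*\ge1$).

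\emph{Proof of \eqref{2.2}, first step.} Write $w(\mathbf{x})=(e+|\mathbf{x}|^2)^{-1}\log^{-\beta}(e+|\mathbf{x}|^2)$. Then
\[
\int\varrho^4|\mathbf{v}|^4=\int \big(\varrho\,\bar{x}^{\alpha}\big)^{a}\,\varrho^{4-a}\,\big(|\mathbf{v}|^4\bar{x}^{-\alpha a}\big),
\]
and we apply a three-term H\"older inequality with exponents $1/a$ (on $\bar x^\alpha\varrho\in L^1$), an exponent placing $\varrho^{4-a}$ in $L^{\zeta}$-type norms (interpolated with $L^1$), and an exponent $m/4$ for $|\mathbf{v}|^m w$. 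The condition making this work is that $\bar x^{-\alpha a\cdot m/(m-4)\cdots}$ dominates $w$, i.e.\ $\alpha a\cdot\frac{m}{4}\cdot\frac{1}{\cdot}\ge 1$ in the power of $(e+|\mathbf{x}|^2)^{1/2}$. Because $\bar x$ carries $\log^2$, the logarithmic exponent can be matched to some $\beta>1+m/2$.

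\emph{Choosing the parameters.} One convenient choice is $a=1/2$, so $\varrho^{4}|\mathbf{v}|^4=(\bar x^\alpha\varrho)^{1/2}\cdot\varrho^{7/2}\cdot \bar x^{-\alpha/2}|\mathbf{v}|^4$. Using H\"older with exponents $2$, $r$, $s$ and $\tfrac12+\tfrac1r+\tfrac1s=1$, we need $\bar x^{-\alpha s/2}|\mathbf{v}|^{4s}\lesssim w|\mathbf{v}|^{4s}$. This requires $\alpha s/2\ge 2$, i.e.\ $s\ge 4/\alpha$; taking $s$ slightly above $2$-ish then forces $r$ and $\varrho^{7r/2}\in L^1\cap L^\zeta$.

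This bookkeeping is the main obstacle. The exact value $\zeta=4(2\alpha+1)/(\alpha-1)$ should emerge from optimizing these exponents, and I expect to tune $a$ (rather than fix $1/2$) to hit precisely this $\zeta$. The factor $1+\|\bar x^\alpha\varrho\|_{L^1}$ then appears from the $L^1$ weight term, with power at most one after Young's inequality.

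\emph{Conclusion.} Finally, Lemma \ref{l2.3} with $m=4s$ bounds $\int w|\mathbf{v}|^{m}$ by $\big(\|\mathbf{v}\|_{L^2(B_1)}+\|\nabla\mathbf{v}\|_{L^2}\big)^m$. Then \eqref{2.1} (with $B_1\subset B_{\eta_*}$) converts $\|\mathbf{v}\|_{L^2(B_1)}$ into $\eta_*^2(\|\sqrt\varrho\mathbf{v}\|_{L^2}+\|\nabla\mathbf{v}\|_{L^2})$. Raising to the power consistent with the fourth power on the left gives the $\eta_*^8$ factor and \eqref{2.2}.
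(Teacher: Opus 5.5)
Your overall route is the one the paper indicates: Poincar\'e on $B_{\eta_*}$ via the mass lower bound $M_2$, then Lemma~\ref{l2.3} after a H\"older split using $\bar{x}^\alpha\varrho\in L^1$, and finally \eqref{2.1} on $B_1\subset B_{\eta_*}$ to get $\eta_*^8$. However, two steps do not work as written.

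\textbf{The power of $\eta_*$ in \eqref{2.1}.} Pairing $\|\varrho\|_{L^\zeta}$ with $\|\mathbf{v}-\bar v\|_{L^{\zeta'}(B_{\eta_*})}$ does not give the factor $\eta_*$. In two dimensions, scaling gives $\|\mathbf{v}-\bar v\|_{L^{q}(B_R)}\le CR^{2/q}\|\nabla\mathbf{v}\|_{L^2(B_R)}$. With $q=\zeta'<2$ this costs $\eta_*^{2-2/\zeta}>\eta_*$. After multiplying by $|B_{\eta_*}|^{1/2}\sim\eta_*$ and squaring, the gradient term carries $\eta_*^{6-4/\zeta}$, which is worse than the stated $\eta_*^4$. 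This is not cosmetic, because that power propagates into the $(1+t)^4$ of \eqref{3.61}.

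The fix is to put $\varrho$ in a low Lebesgue space instead, for example
\[
\int_{B_{\eta_*}}\varrho|\mathbf{v}-\bar v|\,\mathrm{d}\mathbf{x}\le C\eta_*\|\varrho\|_{L^2}\|\nabla\mathbf{v}\|_{L^2(B_{\eta_*})},
\]
with $\|\varrho\|_{L^2}$ controlled by interpolating between $L^1$ and $L^\zeta$. This is consistent with the factor $1+\|\rho\|_{L^2}$ in \eqref{3.57}. It yields exactly $\eta_*^2\|\sqrt{\varrho}\mathbf{v}\|_{L^2}^2+\eta_*^4\|\nabla\mathbf{v}\|_{L^2}^2$. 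There is no room to spare here; you only gain room if you use $\varrho\in L^q$ with $q$ close to $1$.

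\textbf{The exponents in \eqref{2.2}.} You leave the bookkeeping open, and the direction you suggest is wrong. Use your split $(\bar x^\alpha\varrho)^{1/2}\varrho^{7/2}\bar x^{-\alpha/2}|\mathbf{v}|^4$ with H\"older exponents $(2,r,s)$.
\begin{itemize}
\item The weight $\bar x^{-\alpha s/2}$ must be dominated by $(e+|\mathbf{x}|^2)^{-1}\log^{-\beta}(e+|\mathbf{x}|^2)$ with $\beta>1+2s$. At $s=4/\alpha$ you only get $\log^{-4}$, so for $\alpha\le 8/3$ you need $s>4/\alpha$ strictly.
\item Choosing $s$ near $2$ sends $r\to\infty$, since $1/r=\tfrac12-\tfrac1s$.
\item Choosing $s$ near $4/\alpha$ requires $\varrho\in L^{7r/2}$ with $7r/2\approx 14/(2-\alpha)$, which exceeds $\zeta$ as $\alpha\to2$.
\end{itemize}
Take $s$ large instead, for example $s=16$ and $r=16/7$:
\[
\int\varrho^4|\mathbf{v}|^4\,\mathrm{d}\mathbf{x}\le\|\bar{x}^\alpha\varrho\|_{L^1}^{1/2}\,\|\varrho\|_{L^8}^{7/2}\Big(\int\bar{x}^{-8\alpha}|\mathbf{v}|^{64}\,\mathrm{d}\mathbf{x}\Big)^{1/16}.
\]
Since $\bar{x}^{-8\alpha}\le C(e+|\mathbf{x}|^2)^{-1}\log^{-34}(e+|\mathbf{x}|^2)$, Lemma~\ref{l2.3} applies with $m=64$ and $\beta=34$. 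Only $\varrho\in L^8$ is needed, and this holds because $\zeta=8+12/(\alpha-1)>8$.

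No optimization ``produces'' $\zeta$. Any admissible choice that needs $\varrho\in L^q$ with $q\le\zeta$ suffices, so you should not try to hit $\zeta$ exactly.

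With these two repairs, your concluding step goes through: apply Lemma~\ref{l2.3}, then \eqref{2.1} on $B_1$ (using $\eta_*\ge1$), then $\|\bar x^\alpha\varrho\|_{L^1}^{1/2}\le 1+\|\bar x^\alpha\varrho\|_{L^1}$.
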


Finally, we introduce a variant of Zlotnik's inequality (see \cite{Zlo}) to derive the uniform (in time) upper bound of $\|\rho\|_{L^\theta}$.

\begin{lemma}\label{lzlo}
Let the function $y$ satisfy\footnote{Although \cite[Lemma 1.3(b)]{Zlo} states the result for $y'(t)=g(y) + b'(t)$, its proof actually works for the inequality as well.}
\begin{equation*}
y'(t)\leq g(y) + b'(t) \ \ \text{on} \ [0, T], \quad y(0) = y^{0},
\end{equation*}
with $g \in C(\mathbb{R})$ and $y, b \in W^{1,1}(0, T)$. If $g(\infty)=-\infty$ and
\begin{equation*}
b(t_{2})-b(t_{1}) \leq N_{0} + N_{1}(t_{2} - t_{1})
\end{equation*}
for all $0 \leq t_{1} < t_{2} \leq T$ with some $N_{0} \geq 0$ and $N_{1} \geq 0$, then
\begin{equation*}
y(t) \leq \max \{y^{0}, \xi^*\} + N_{0} < \infty \ \ \text{on} \ [0, T],
\end{equation*}
where $\xi^*$ is a constant such that
\begin{equation*}
g(\xi) \leq -N_{1} \ \ \text{for} \ \xi \geq \xi^*.
\end{equation*}
\end{lemma}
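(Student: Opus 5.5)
The statement is Lemma \ref{lzlo}. I would prove it directly by a first-exit (continuity) argument, without going through the full comparison theory of \cite{Zlo}. The only structural facts needed are the following. Since $y,b\in W^{1,1}(0,T)$, both functions are absolutely continuous on $[0,T]$, so the differential inequality can be integrated over any subinterval. Since $g(\infty)=-\infty$, a threshold $\xi^*$ with $g(\xi)\le -N_1$ for all $\xi\ge\xi^*$ exists. Set
\begin{equation*}
Y\triangleq\max\{y^0,\xi^*\}.
\end{equation*}

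I argue by contradiction. Suppose that for some $t\in(0,T]$ we have $y(t)>Y+N_0$. Because $N_0\ge0$, this gives $y(t)>Y$. Define
\begin{equation*}
t_0\triangleq\sup\{s\in[0,t]:\ y(s)\le Y\}.
\end{equation*}
This set is nonempty, since $y(0)=y^0\le Y$. By continuity of $y$ we get $y(t_0)\le Y$, hence $t_0<t$. Moreover $y(s)>Y\ge\xi^*$ for every $s\in(t_0,t]$, and the maximality of $t_0$ together with continuity forces $y(t_0)=Y$.

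On $(t_0,t]$ we then have $g(y(s))\le -N_1$, so that $y'(s)\le -N_1+b'(s)$ for a.e.\ $s\in(t_0,t)$. Integrating from $t_0$ to $t$ and using the assumed growth bound on $b$ yields
\begin{equation*}
y(t)-Y=y(t)-y(t_0)\le -N_1(t-t_0)+b(t)-b(t_0)\le -N_1(t-t_0)+N_0+N_1(t-t_0)=N_0 .
\end{equation*}
This contradicts $y(t)>Y+N_0$, and the claimed bound $y(t)\le\max\{y^0,\xi^*\}+N_0$ holds on $[0,T]$.

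There is no real obstacle here. The only points needing care are the following.
\begin{itemize}
\item Integration of an a.e.\ differential inequality is justified by the absolute continuity of $W^{1,1}$ functions in one dimension.
\item $g\circ y$ is integrable on $[t_0,t]$ because $g$ is continuous and $y$ is bounded there.
\item The exit time $t_0$ is well defined and satisfies $y(t_0)=Y$ exactly.
\end{itemize}
This is also why the equality version in \cite{Zlo} carries over verbatim to the inequality version.
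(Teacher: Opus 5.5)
Your proof is correct and complete. The paper gives no argument of its own for this lemma. It invokes Zlotnik's result for the equation $y'=g(y)+b'$ and asserts in a footnote that the same proof works for the inequality.

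Your first-exit argument is a self-contained verification of exactly that footnote claim. It also shows why the inequality is enough. The differential relation is used only on the excursion interval $(t_0,t]$, where $y>\max\{y^0,\xi^*\}\ge\xi^*$, and only in the one-sided form $y'\le -N_1+b'$.

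One small simplification: you never need $y(t_0)=Y$. The bound $y(t_0)\le Y$, which follows directly from the sublevel set being closed, already gives $y(t)\le Y+N_0$.
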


\subsection{$L^p$-estimates for $\nabla F$ and $\nabla\mathbf{u}$}

In this subsection, we derive several $L^p$-estimates for later use in the {\it a priori} analysis.
\begin{lemma}\label{l2.6}
Let $(\rho,\mathbf{u},\mathbf{B})$ be a smooth solution to the problem \eqref{a1}--\eqref{a3}. Then, for any $1< q<\infty$ and $2< p<\infty$, there exists a generic positive constant $C$ depending only on $q$, $p$, $\mu$, and $\lambda$ such that
\begin{gather}\label{2.3}
\|\nabla F\|_{L^q}\leq
C\|\rho\dot{\mathbf{u}}\|_{L^q}+C\||\mathbf{B}||\nabla\mathbf{B}|\|_{L^q},\\
\|\nabla\mathbf{u}\|_{L^p}\leq C\|\sqrt{\rho}\|_{L^p}\|\sqrt{\rho}\dot{\mathbf{u}}\|_{L^2}+C\|P\|_{L^p}+C\|\mathbf{B}\|_{L^{2p}}^2.\label{2.4}
\end{gather}
\end{lemma}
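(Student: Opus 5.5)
We rewrite the momentum equation in terms of the effective viscous flux and the vorticity, and then apply standard Calderón--Zygmund theory on $\mathbb{R}^2$. Since $\divv\mathbf{B}=0$, we have $\mathbf{B}\cdot\nabla\mathbf{B}-\frac12\nabla|\mathbf{B}|^2=\divv(\mathbf{B}\otimes\mathbf{B})-\frac12\nabla|\mathbf{B}|^2$. Using $\Delta\mathbf{u}=\nabla\divv\mathbf{u}-\nabla^\bot\curl\mathbf{u}$ (with the sign convention of \eqref{1.5}), the momentum equation $\eqref{a1}_2$ becomes
\begin{equation*}
\rho\dot{\mathbf{u}}=\nabla F-\mu\nabla^\bot\curl\mathbf{u}+\mathbf{B}\cdot\nabla\mathbf{B},
\end{equation*}
where $F$ already absorbs $P$ and $\frac12|\mathbf{B}|^2$.

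\textbf{Proof of \eqref{2.3}.} Take the divergence and the curl of this identity. This gives
\begin{equation*}
\Delta F=\divv(\rho\dot{\mathbf{u}}-\mathbf{B}\cdot\nabla\mathbf{B}),\qquad \mu\Delta\curl\mathbf{u}=\curl(\rho\dot{\mathbf{u}}-\mathbf{B}\cdot\nabla\mathbf{B})
\end{equation*}
(up to the sign convention for $\nabla^\bot$). Hence $\nabla F=\nabla(-\Delta)^{-1}\divv(\mathbf{B}\cdot\nabla\mathbf{B}-\rho\dot{\mathbf{u}})$, which is a zeroth-order Riesz-type operator. By $L^q$-boundedness of Riesz transforms for $1<q<\infty$,
\begin{equation*}
\|\nabla F\|_{L^q}\le C\|\rho\dot{\mathbf{u}}\|_{L^q}+C\|\mathbf{B}\cdot\nabla\mathbf{B}\|_{L^q}\le C\|\rho\dot{\mathbf{u}}\|_{L^q}+C\||\mathbf{B}||\nabla\mathbf{B}|\|_{L^q}.
\end{equation*}
Working with smooth, decaying solutions justifies the inversion of $\Delta$: $F$ and $\curl\mathbf{u}$ vanish at infinity and lie in $L^2$, so there is no harmonic ambiguity.

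\textbf{Proof of \eqref{2.4}.} We use the decomposition $\mathbf{u}=\mathbf{w}_1+\mathbf{w}_2+\mathbf{w}_3$ announced in Subsection \ref{sec1.3}, defined by the Lamé problems
\begin{equation*}
\mu\Delta\mathbf{w}_1+(\mu+\lambda)\nabla\divv\mathbf{w}_1=\rho\dot{\mathbf{u}},\qquad
\mu\Delta\mathbf{w}_2+(\mu+\lambda)\nabla\divv\mathbf{w}_2=\nabla P,
\end{equation*}
\begin{equation*}
\mu\Delta\mathbf{w}_3+(\mu+\lambda)\nabla\divv\mathbf{w}_3=\frac12\nabla|\mathbf{B}|^2-\divv(\mathbf{B}\otimes\mathbf{B}).
\end{equation*}
Each term is then estimated separately.

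For $\mathbf{w}_2$ and $\mathbf{w}_3$, $\nabla\mathbf{w}$ is a matrix of Riesz transforms applied to $P$ or to $\mathbf{B}\otimes\mathbf{B}$ and $|\mathbf{B}|^2$. Therefore $\|\nabla\mathbf{w}_2\|_{L^p}\le C\|P\|_{L^p}$ and $\|\nabla\mathbf{w}_3\|_{L^p}\le C\||\mathbf{B}|^2\|_{L^p}=C\|\mathbf{B}\|_{L^{2p}}^2$.

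For $\mathbf{w}_1$, the Sobolev embedding $\dot W^{1,\frac{2p}{p+2}}\hookrightarrow L^p$ (valid since $p>2$ gives $\frac{2p}{p+2}\in(1,2)$) combined with $L^r$ elliptic estimates for the Lamé operator gives
\begin{equation*}
\|\nabla\mathbf{w}_1\|_{L^p}\le C\|\nabla^2\mathbf{w}_1\|_{L^{\frac{2p}{p+2}}}\le C\|\rho\dot{\mathbf{u}}\|_{L^{\frac{2p}{p+2}}}.
\end{equation*}
Hölder's inequality with $\frac{p+2}{2p}=\frac1p+\frac12$ then yields $\|\rho\dot{\mathbf{u}}\|_{L^{\frac{2p}{p+2}}}\le\|\sqrt\rho\|_{L^p}\|\sqrt\rho\dot{\mathbf{u}}\|_{L^2}$. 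Summing the three bounds gives \eqref{2.4}.

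\textbf{Main obstacle.} The only delicate point is justifying that $\mathbf{u}$ equals $\mathbf{w}_1+\mathbf{w}_2+\mathbf{w}_3$ exactly, which amounts to uniqueness of the Lamé problem in the class $\nabla\mathbf{w}\in L^2\cap L^p$ modulo constants. The difference $\mathbf{u}-\sum\mathbf{w}_i$ satisfies the homogeneous Lamé system, so its gradient is harmonic on $\mathbb{R}^2$ and lies in $L^2$. By Liouville's theorem the gradient vanishes, so the difference is a constant, and this constant does not affect $\nabla\mathbf{u}$. To make this rigorous, we define each $\mathbf{w}_i$ through the fundamental solution (Fourier multipliers) and confirm $\nabla\mathbf{w}_i\in L^2$ for the smooth solutions under consideration.
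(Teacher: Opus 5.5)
Your proposal is correct and follows the paper's proof essentially step for step: $\Delta F=\divv(\rho\dot{\mathbf{u}}-\mathbf{B}\cdot\nabla\mathbf{B})$ plus Calder\'on--Zygmund bounds for \eqref{2.3}, and the Lam\'e decomposition $\mathbf{u}=\mathbf{w}_1+\mathbf{w}_2+\mathbf{w}_3$ with $\dot W^{1,\frac{2p}{p+2}}\hookrightarrow L^p$ and H\"older for \eqref{2.4}; putting $\frac12\nabla|\mathbf{B}|^2$ into $\mathbf{w}_3$ instead of $\mathbf{w}_2$ changes nothing. There is one small slip in your uniqueness remark (which the paper leaves implicit): when $\mu+\lambda\neq0$, the gradient of a solution of the homogeneous Lam\'e system need not be harmonic, only biharmonic, so instead argue that the divergence and curl of the difference are harmonic and lie in $L^2+L^p$ (a priori $\nabla\mathbf{w}_1$ is only known to be in $L^p$), hence vanish, so the difference is harmonic and Liouville applies to its gradient.
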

\begin{proof}
Using the vector identity $\Delta \mathbf{u}=\nabla\divv \mathbf{u}-\nabla^\bot\curl \mathbf{u}$, we get from $\eqref{a1}_2$ that
\begin{equation}\label{2.5}
  \Delta F=\divv(\rho\dot{\mathbf{u}}-\mathbf{B}\cdot\nabla\mathbf{B}).
\end{equation}
The standard $L^p$-estimate for the elliptic system \eqref{2.5} directly yields \eqref{2.3}.
Let
\begin{align}\label{2.6}
\begin{cases}
\mu\Delta\mathbf{w}_1+(\mu+\lambda)\nabla\divv\mathbf{w}_1=\rho\dot{\mathbf{u}}, \\
\mu\Delta\mathbf{w}_2+(\mu+\lambda)\nabla\divv\mathbf{w}_2=\nabla \big(P+\frac12|\mathbf{B}|^2\big),\\
\mu\Delta\mathbf{w}_3+(\mu+\lambda)\nabla\divv\mathbf{w}_3=-\divv(\mathbf{B}\otimes\mathbf{B}),\\
\divv \mathbf{B}=0.
\end{cases}
\end{align}
Accordingly, we set $\mathbf{u}=\mathbf{w}_1+\mathbf{w}_2+\mathbf{w}_3$. It thus follows from the elliptic theory for the Lam\'e system that
\begin{align}\label{2.7}
 \|\nabla\mathbf{u}\|_{L^p}&\leq \|\nabla\mathbf{w}_1\|_{L^p}+\|\nabla\mathbf{w}_2\|_{L^p}
 +\|\nabla\mathbf{w}_3\|_{L^p}\notag\\
 &\leq C(p)\|\nabla^2\mathbf{w}_1\|_{L^{\frac{2p}{2+p}}}+\|\nabla\mathbf{w}_2\|_{L^p}
 +\|\nabla\mathbf{w}_3\|_{L^p}\notag\\
 &\leq C\|\rho\dot{\mathbf{u}}\|_{L^{\frac{2p}{2+p}}}+C\|P\|_{L^p}+C\||\mathbf{B}|^2\|_{L^p}\\
 &\leq C\|\sqrt{\rho}\|_{L^p}\|\sqrt{\rho}\dot{\mathbf{u}}\|_{L^2}+C\|P\|_{L^p}
 +C\||\mathbf{B}|^2\|_{L^p}.\tag*{\qedhere}
\end{align}
\end{proof}

\section{\textit{A priori} estimates}\label{sec3}

In this section we establish necessary {\it a priori} bounds for the strong solution to \eqref{a1}--\eqref{a3}, whose existence is guaranteed by Lemma \ref{l2.1}.
These bounds are independent of the lower bound of $\rho$, the initial regularity, and the time of existence. More precisely, fix $T>0$ and let $(\rho,\mathbf{u},\mathbf{B})$ be the corresponding strong solution on $\mathbb{R}^2\times(0,T]$.

First, after multiplying \eqref{a1}$_1$ by a cut-off function and use a standard limit procedure, one finds that
\begin{align}\label{3.1}
\|\rho(t)\|_{L^1}=\|\rho_0\|_{L^1},\ \ 0\leq t\leq T.
\end{align}
Next, set $\sigma=\sigma(t)\triangleq\min\{1,t\}$, and we define auxiliary functionals as
\begin{align}\label{3.2}
  &A_1(T)\triangleq \sup_{t\in[0,T]}\big[\sigma\big(\|\nabla \mathbf{u}\|_{L^2}^2+\|\nabla \mathbf{B}\|_{L^2}^2\big)\big]
  +\int_0^T\sigma\big(\|\sqrt{\rho}\dot{\mathbf{u}}\|_{L^2}^2
  +\|\Delta\mathbf{B}\|_{L^2}^2\big)\mathrm{d}t,\\
   &A_2(T)\triangleq \sup_{t\in[0,T]}\big[\sigma^3\big(\|\sqrt{\rho}\dot{\mathbf{u}}\|_{L^2}^2
   +\||\mathbf{B}||\nabla \mathbf{B}|\|_{L^2}^2\big)\big]
   +\int_0^T\sigma^3\big(\|\nabla \dot{\mathbf{u}}\|_{L^2}^2
   +\||\mathbf{B}||\Delta \mathbf{B}|\|_{L^2}^2\big)\mathrm{d}t,\label{3.3}\\
   &A_3(T)\triangleq \sup_{t\in[0,T]}\big(\|\nabla \mathbf{u}\|_{L^2}^2+\|\nabla \mathbf{B}\|_{L^2}^2\big)
  +\int_0^T\big(\|\sqrt{\rho}\dot{\mathbf{u}}\|_{L^2}^2
  +\|\Delta\mathbf{B}\|_{L^2}^2\big)\mathrm{d}t.
   \label{3.4}
\end{align}
Then we derive the following key {\it a priori} estimates on $(\rho, \mathbf{u},\mathbf{B})$.

\begin{proposition}\label{p3.1}
Under the conditions of Theorem $\ref{t1.1}$, there exist positive constants $K\geq 2M^2$ and $\varepsilon$ depending only on $\alpha, \hat{\rho}, M, a, \gamma, \mu, \lambda, \nu$, and $\eta_0$ such that if $(\rho, \mathbf{u},\mathbf{B})$ is a strong solution to the problem \eqref{a1}--\eqref{a3} satisfying
\begin{align}\label{3.5}
\sup_{\mathbb{R}^2\times[0,T]}\|\rho\|_{L^{\theta}}\le2\hat{\rho},\ \
A_1(T)+A_2(T)\leq 2C_0^\frac{\alpha+2}{7\alpha+5}, \ \
A_3(\sigma(T))\leq 3K,
\end{align}
then the following improved bounds hold
\begin{align}\label{3.6}
\sup_{\mathbb{R}^2\times[0,T]}\|\rho\|_{L^{\theta}}\le\frac{7}{4}\hat{\rho},\ \
A_1(T)+A_2(T)\leq C_0^\frac{\alpha+2}{7\alpha+5},\ \
A_3(\sigma(T))\leq 2K,
\end{align}
provided $C_0\leq \varepsilon$.
\end{proposition}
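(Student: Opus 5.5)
The proof will be a bootstrap (continuity) argument run under the hypothesis \eqref{3.5}. The improvements will be obtained in the order energy, then $A_3(\sigma(T))$, then $A_1+A_2$, then $\|\rho\|_{L^\theta}$.

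\emph{Step 1: energy and short time.} The basic energy identity gives
\[
\sup_t\Big(\|\sqrt\rho\mathbf{u}\|_{L^2}^2+\|\mathbf{B}\|_{L^2}^2+\|P\|_{L^1}\Big)+\int_0^T\big(\|\nabla\mathbf{u}\|_{L^2}^2+\|\nabla\mathbf{B}\|_{L^2}^2\big)\,dt\le CC_0 .
\]
Interpolating between $L^1$ and $L^{\theta/\gamma}$ and using \eqref{3.5}, we get $\|P\|_{L^q}\le C(\hat\rho)C_0^{\delta(q)}$ for $1\le q<\theta/\gamma$.

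For $A_3(\sigma(T))$ I will test $\eqref{a1}_2$ against $\dot{\mathbf{u}}$ and $\eqref{a1}_3$ against $-\Delta\mathbf{B}$ on $[0,\sigma(T)]$, in the style of Hoff. The pressure term is written as $\frac{d}{dt}\int P\divv\mathbf{u}$ plus $\int P|\nabla\mathbf{u}|^2$-type terms, and these are controlled by \eqref{2.4}, Gagliardo--Nirenberg (Lemma \ref{l2.2}) and $\|\sqrt\rho\|_{L^p}\le C(\hat\rho)$. The resulting cubic and quartic terms, for instance $\|\nabla\mathbf{u}\|_{L^3}^3$ and $\int|\mathbf{B}|^2|\nabla\mathbf{B}|^2$, carry at least one factor that is small through $C_0$. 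Because the time interval has length at most $1$, this yields $A_3(\sigma(T))\le K+CC_0^{\delta}\cdot(3K)^{m}\le 2K$, with $K$ fixed as a multiple of $M^2$.

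\emph{Step 2: $A_1$ and $A_2$.} For $A_1$ I will repeat the computation of Step 1 multiplied by $\sigma$. For $A_2$ I will apply $\dot{}$ to the momentum equation, i.e. $\partial_t+\divv(\mathbf{u}\,\cdot)$, test with $\sigma^3\dot{\mathbf{u}}$, and combine this with a $\sigma^3$-weighted estimate for $|\mathbf{B}||\nabla\mathbf{B}|$ obtained from the induction equation.

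The terms are handled as follows:
\begin{itemize}
\item The flux $F$ is controlled through \eqref{2.3}.
\item $\nabla\mathbf{u}$ in $L^p$ is controlled through the decomposition in Lemma \ref{l2.6}.
\item The terms $\|\rho\dot{\mathbf{u}}\|_{L^4}$ and $\|\rho\mathbf{u}\|_{L^4}$ are controlled through the weighted inequality \eqref{2.2} of Lemma \ref{l2.4}. Its hypotheses come from \eqref{3.1}, \eqref{1.13}-type mass localization on $B_{\eta_1(1+t)\log^\alpha(e+t)}$ (obtained from the mass equation with a cutoff together with the energy bound), and the propagation of $\|\bar x^\alpha\rho\|_{L^1}$.
\item The integral $\int\sigma P|\nabla\mathbf{u}|^2$ is split into $[0,1]$, handled with $L^6$ bounds, and $[1,T]$, where it is reduced to $\int\sigma^3P^4$. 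This is small by the $L^q$ smallness of $P$ obtained in Step 1.
\end{itemize}

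The coupling terms $\mathbf{u}\cdot\nabla\mathbf{B}-\mathbf{B}\cdot\nabla\mathbf{u}$ are absorbed by $\nu\|\Delta\mathbf{B}\|_{L^2}^2$ without using $\|\mathbf{u}\|_{L^2}$. Tracking powers of $C_0$ produces
\[
A_1+A_2\le CC_0^{\kappa}+C\big(A_1+A_2\big)^{1+\delta'},\qquad \kappa>\tfrac{\alpha+2}{7\alpha+5}.
\]
With \eqref{3.5}, this closes to $A_1+A_2\le C_0^{(\alpha+2)/(7\alpha+5)}$ for small $C_0$. The exponent $(\alpha+2)/(7\alpha+5)$ is presumably dictated by the polynomial time growth coming from $\eta_*^8\sim(1+t)^8\log^{8\alpha}$ in \eqref{2.2}.

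\emph{Step 3: the density bound.} Write $y=\|\rho\|_{L^\theta}^\theta$. By \eqref{1.18} this gives $y'\le g(y)+b'(t)$ with $g(y)=-cy^{1+(\gamma-1)/\theta}$, which follows from Hölder and the $L^1$ bound, and $b'\le C\int\rho^\theta(|F|+|\mathbf{B}|^2)$.

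On $[0,\sigma(T)]$ I will bound $\int_0^{\sigma}\|F\|_{L^\infty}+\|\mathbf{B}\|_{L^\infty}^2\,dt$ by $CC_0^{\delta}$ using Lemma \ref{l2.2} and $A_3$. On $[1,T]$ the interpolation $\|F\|_{L^\infty}\lesssim\|\nabla F\|_{L^4}^{3/16}\|\nabla F\|_{L^{52/29}}^{13/16}$ gives growth $(1+t)^{3/4}$ multiplied by powers of $A_1,A_2$ and $C_0$. The choice of $\theta$ ensures that this is integrable against the available time weights, so that $b(t_2)-b(t_1)\le N_0+N_1(t_2-t_1)$ with $N_0,N_1$ small. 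Lemma \ref{lzlo} then yields $\|\rho\|_{L^\theta}\le\hat\rho+$small$\le\frac74\hat\rho$.

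\emph{Main obstacle.} I expect the hardest step to be this large-time control of $\|F\|_{L^\infty}$ and $\|\mathbf{B}\|_{L^\infty}$. One must check that the polynomial growth from \eqref{2.2} is beaten by the decay supplied by $\sigma^3$-weighted quantities together with the $L^q$ smallness of $P$ and $\mathbf{B}$. Matching all the exponents is delicate, and I would first try to absorb the time growth through $\int_1^T\|\sqrt\rho\dot{\mathbf{u}}\|_{L^2}^2dt\le C_0^{\kappa}$ combined with Hölder in time.
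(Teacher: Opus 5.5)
Your overall structure matches the paper: a short-time bound for $A_3$, $\sigma$- and $\sigma^3$-weighted Hoff estimates for $A_1,A_2$, and Zlotnik's lemma for $\|\rho\|_{L^\theta}$. However, both places where the bounds must be made uniform in $T$ fail as written.

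\textbf{The time integral of the pressure.} You assert that $\int_1^T\sigma^3\|P\|_{L^4}^4\,\mathrm{d}t$ ``is small by the $L^q$ smallness of $P$ obtained in Step 1.'' That smallness holds only pointwise in time: $\sup_t\|P\|_{L^4}^4\le C(\hat\rho)C_0^{4(\alpha+2)/(7\alpha+5)}$. Integrated over $[1,T]$ it gives $C(T-1)C_0^{4(\alpha+2)/(7\alpha+5)}$, so your $\varepsilon$ would depend on $T$. The same term enters $\int\sigma^3\|\nabla\mathbf{u}\|_{L^4}^4$ and $\int_1^T\|\nabla\mathbf{u}\|_{L^3}^3$ through \eqref{2.4}. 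Hence your closing inequality for $A_1+A_2$ is not established.

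The missing idea is that the pressure is damped through the effective viscous flux. Test \eqref{3.8} with $3\sigma^3P^2$ and insert $\divv\mathbf{u}=(F+P+\frac12|\mathbf{B}|^2)/(2\mu+\lambda)$:
\[
\frac{\mathrm{d}}{\mathrm{d}t}\int\sigma^3P^3\,\mathrm{d}\mathbf{x}+\frac{3\gamma-1}{2\mu+\lambda}\sigma^3\|P\|_{L^4}^4
=3\sigma^2\sigma'\|P\|_{L^3}^3-\frac{3\gamma-1}{2\mu+\lambda}\int\sigma^3P^3\Big(F+\frac12|\mathbf{B}|^2\Big)\mathrm{d}\mathbf{x}.
\]
Thus $\int_0^T\sigma^3\|P\|_{L^4}^4$ is controlled by $\sup_t\|P\|_{L^3}^3$ and by $\int_0^T\sigma^3(\|F\|_{L^4}^4+\||\mathbf{B}|^2\|_{L^4}^4)$. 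The bounds $\|F\|_{L^4}^4\le C\|F\|_{L^2}\|\nabla F\|_{L^{3/2}}^3$ and $\|\nabla F\|_{L^{3/2}}\le C\|\sqrt\rho\|_{L^6}\|\sqrt\rho\dot{\mathbf{u}}\|_{L^2}+C\||\mathbf{B}||\nabla\mathbf{B}|\|_{L^{3/2}}$ make this superlinear in $A_1,A_2$; see \eqref{3.45}.

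Relatedly, \eqref{2.2} must not enter the $A_1,A_2$ estimates at all, since its $(1+t)^4$ growth would also destroy $T$-uniformity. Only the H\"older bound $\|\rho\dot{\mathbf{u}}\|_{L^{2p/(p+2)}}\le\|\sqrt\rho\|_{L^p}\|\sqrt\rho\dot{\mathbf{u}}\|_{L^2}$ is used there. The exponent $(\alpha+2)/(7\alpha+5)$ comes from the interpolation \eqref{3.12} of $P$ between $L^1$ and $L^{\theta/\gamma}$ (it is exactly the exponent of $\|P\|_{L^4}$), not from \eqref{2.2}.

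\textbf{Large-time control in Step 3.} On $[1,T]$ one has $\sigma\equiv1$, so $A_1,A_2$ give no decay. H\"older in time against $\int_1^T\|\sqrt\rho\dot{\mathbf{u}}\|_{L^2}^2\,\mathrm{d}t\le C_0^\kappa$ leaves a factor like $(\int_{t_1}^{t_2}(1+t)^{3/2}\,\mathrm{d}t)^{1/2}$, which on unit intervals grows like $t_1^{3/4}$. No bound of the form $N_0+N_1(t_2-t_1)$ can come from unweighted $L^2_t$ information.

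What is needed is a separate layer of large-time estimates with growing weights:
\begin{itemize}
\item Testing \eqref{3.8} with $q(tP)^{q-1}$ gives \eqref{ef1}.
\item Add $\widehat{C}\,t\|P\|_{L^2}^2$ to the $t$-weighted version of \eqref{3.20} and use $\int_{\sigma(T)}^T\|P\|_{L^2}^2\,\mathrm{d}t\le C$, which follows from $\|P\|_{L^2}^2=\int P((2\mu+\lambda)\divv\mathbf{u}-F-\frac12|\mathbf{B}|^2)$. Gr\"onwall then yields \eqref{ef5}.
\item The $t^{3/2}$-weighted versions of \eqref{ef} and \eqref{3.37} give $\sup_{t\ge1}t^{3/2}\|\sqrt\rho\dot{\mathbf{u}}\|_{L^2}^2+\int_1^Tt^{3/2}(\|\nabla\dot{\mathbf{u}}\|_{L^2}^2+\||\mathbf{B}||\Delta\mathbf{B}|\|_{L^2}^2)\,\mathrm{d}t\le C$.
\end{itemize}
Only with these decay rates can the $(1+t)^{3/4}$ growth be absorbed, via Young's inequality, into $N_1(t_2-t_1)$, with $N_1=a\tilde{c}(\theta-1)/(2\mu+\lambda)$ fixed by the damping term, leaving a small $N_0$.

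A smaller omission of the same kind occurs on $[0,\sigma(T)]$. Bounding $\|\nabla F\|_{L^4}$ requires $\nabla\dot{\mathbf{u}}$, which $A_3$ does not control. The $\sigma^3$ weight in $A_2$ is too strong near $t=0$: H\"older leaves a non-integrable singularity there. The paper therefore needs the additional $\sigma$-weighted bound \eqref{3.47}, and the smallness on this interval comes from $A_1$ and $\|P\|_{L^2}$, not from $A_3$.
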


Before proving Proposition \ref{p3.1}, we establish some necessary \textit{a priori} estimates, see Lemmas \ref{l3.1}--\ref{l3.6} below. Let us begin with the basic energy estimate for
$(\rho, \mathbf{u},\mathbf{B})$.

\begin{lemma}\label{l3.1}
It holds that
\begin{align}\label{3.7}
\sup_{0\le t\le T}\int\bigg(\frac{1}{2}\rho |\mathbf{u}|^2+\frac{1}{2} |\mathbf{B}|^2+\frac{P}{\gamma-1}\bigg)\mathrm{d}\mathbf{x}+\int_0^T\left[\mu\|\nabla \mathbf{u}\|_{L^2}^2+(\mu+\lambda)\|\divv \mathbf{u}\|_{L^2}^2+\nu\|\nabla \mathbf{B}\|_{L^2}^2\right]\mathrm{d}t
\leq C_0.
\end{align}
\end{lemma}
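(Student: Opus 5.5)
The plan is the standard energy identity: test each equation against its natural multiplier and add. Since $(\rho,\mathbf{u},\mathbf{B})$ is a strong solution, all manipulations are justified. The only care needed is at spatial infinity, where we have far-field vacuum. There we integrate against a cut-off $\phi_R(\mathbf{x})=\phi(\mathbf{x}/R)$ and let $R\to\infty$, exactly as was done for \eqref{3.1}.

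\textbf{Step 1 (kinetic and internal energy).} Multiply \eqref{a1}$_2$ by $\mathbf{u}$ and integrate. By the mass equation \eqref{a1}$_1$,
\begin{equation*}
\int\big[(\rho\mathbf{u})_t+\divv(\rho\mathbf{u}\otimes\mathbf{u})\big]\cdot\mathbf{u}\,\mathrm{d}\mathbf{x}=\frac{\mathrm{d}}{\mathrm{d}t}\int\frac12\rho|\mathbf{u}|^2\,\mathrm{d}\mathbf{x}.
\end{equation*}
For the pressure, the renormalized mass equation gives $(\rho^\gamma)_t+\divv(\rho^\gamma\mathbf{u})+(\gamma-1)\rho^\gamma\divv\mathbf{u}=0$. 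Hence
\begin{equation*}
\int\nabla P\cdot\mathbf{u}\,\mathrm{d}\mathbf{x}=-\int P\divv\mathbf{u}\,\mathrm{d}\mathbf{x}=\frac{\mathrm{d}}{\mathrm{d}t}\int\frac{P}{\gamma-1}\,\mathrm{d}\mathbf{x}.
\end{equation*}
The viscous terms produce $\mu\|\nabla\mathbf{u}\|_{L^2}^2+(\mu+\lambda)\|\divv\mathbf{u}\|_{L^2}^2$. The Lorentz force contributes
\begin{equation*}
\int\Big(\mathbf{B}\cdot\nabla\mathbf{B}-\frac12\nabla|\mathbf{B}|^2\Big)\cdot\mathbf{u}\,\mathrm{d}\mathbf{x}=-\int\mathbf{B}\cdot\nabla\mathbf{u}\cdot\mathbf{B}\,\mathrm{d}\mathbf{x}+\frac12\int|\mathbf{B}|^2\divv\mathbf{u}\,\mathrm{d}\mathbf{x},
\end{equation*}
where we have used $\divv\mathbf{B}=0$.

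\textbf{Step 2 (magnetic energy).} Multiply \eqref{a1}$_3$ by $\mathbf{B}$ and integrate. Using $\divv\mathbf{B}=0$ again,
\begin{equation*}
\int(\mathbf{u}\cdot\nabla\mathbf{B})\cdot\mathbf{B}\,\mathrm{d}\mathbf{x}=-\frac12\int|\mathbf{B}|^2\divv\mathbf{u}\,\mathrm{d}\mathbf{x}.
\end{equation*}
Together with the term $\int|\mathbf{B}|^2\divv\mathbf{u}\,\mathrm{d}\mathbf{x}$ coming from $\mathbf{B}\divv\mathbf{u}$, this yields
\begin{equation*}
\frac{\mathrm{d}}{\mathrm{d}t}\frac12\|\mathbf{B}\|_{L^2}^2+\nu\|\nabla\mathbf{B}\|_{L^2}^2=\int\mathbf{B}\cdot\nabla\mathbf{u}\cdot\mathbf{B}\,\mathrm{d}\mathbf{x}-\frac12\int|\mathbf{B}|^2\divv\mathbf{u}\,\mathrm{d}\mathbf{x}.
\end{equation*}
The right-hand side is exactly the negative of the Lorentz contribution in Step 1. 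Adding the two identities therefore cancels every coupling term, and integrating over $(0,t)$ gives \eqref{3.7}, in fact with equality.

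\textbf{Main obstacle.} The delicate point is the integration by parts on $\mathbb{R}^2$ when $\mathbf{u}$ itself need not lie in $L^2$. To handle it, we carry out Steps 1--2 with the test functions $\mathbf{u}\phi_R$ and $\mathbf{B}\phi_R$. Every boundary term then carries a factor $\nabla\phi_R=O(R^{-1})$ supported in $B_{2R}\setminus B_R$. Each such term is a product of quantities that are integrable by Lemma \ref{l2.1}: for example $\rho|\mathbf{u}|^3$, $P|\mathbf{u}|$, $|\nabla\mathbf{u}||\mathbf{u}|$, $|\mathbf{B}|^2|\mathbf{u}|$, $|\nabla\mathbf{B}||\mathbf{B}|$. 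The velocity factors are controlled by the weighted bound of Lemma \ref{l2.3}, so that $|\mathbf{u}|/(\bar x)$-type weights are admissible, while $\rho,P\in L^1$ thanks to $\bar{x}^\alpha\rho\in L^1$. With this, the boundary terms vanish as $R\to\infty$, and the identity passes to the limit.
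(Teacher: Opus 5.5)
Your proof is correct and is essentially the paper's own argument: test \eqref{a1}$_2$ with $\mathbf{u}$ and \eqref{a1}$_3$ with $\mathbf{B}$, add the pressure equation \eqref{3.8}, observe that the Lorentz-force terms cancel exactly against the coupling terms of the induction equation, and integrate the resulting energy equality in time. The paper simply integrates over $\mathbb{R}^2$ with no cut-off discussion; if you keep your extra paragraph, note that the viscous boundary term $R^{-1}\int_{B_{2R}\setminus B_R}|\nabla\mathbf{u}||\mathbf{u}|\,\mathrm{d}\mathbf{x}$ is borderline in two dimensions (the product $|\nabla\mathbf{u}||\mathbf{u}|$ need not be integrable, and Lemma \ref{l2.3} only bounds the term by $C\log^{\beta/2}R\,\|\nabla\mathbf{u}\|_{L^2(B_{2R}\setminus B_R)}$ with $\beta>2$), so it needs a finer argument than integrability, e.g.\ Poincar\'e's inequality on dyadic annuli together with a suitable sequence $R_k\to\infty$.
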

\begin{proof}
From the mass equation $\eqref{a1}_1$, we have
\begin{equation}\label{3.8}
\frac{P_t}{\gamma-1}+\frac{\divv(P\mathbf{u})}{\gamma-1}+P\divv \mathbf{u}=0.
\end{equation}
Multiplying $\eqref{a1}_2$ by $\mathbf{u}$ and $\eqref{a1}_3$ by $\mathbf{B}$, respectively, adding the summation to \eqref{3.8}, and integrating the resultant over $\mathbb{R}^2$, we arrive at
\begin{align*}
\frac{\mathrm{d}}{\mathrm{d}t}\int\bigg(\frac{1}{2}\rho|\mathbf{u}|^2+\frac{1}{2} |\mathbf{B}|^2+\frac{P}{\gamma-1}\bigg)\mathrm{d}\mathbf{x}
+\int\left[\mu|\nabla\mathbf{u}|^2+(\mu+\lambda)(\divv\mathbf{u})^2
+\nu|\nabla\mathbf{B}|^2\right]\mathrm{d}\mathbf{x}
=0.
\end{align*}
Integrating the above equality with respect to $t$ over $(0,T)$ yields \eqref{3.7}.
\end{proof}

The next result concerns the preliminary bounds of $A_1(T)$ and $A_2(T)$.
\begin{lemma}\label{l3.2}
Let $(\rho,\mathbf{u},\mathbf{B})$ be a smooth solution of \eqref{a1}--\eqref{a3} on $\mathbb{R}^2\times(0,T]$ satisfying $\|\rho\|_{L^{\theta}}\le2\hat{\rho}$. Then there exist positive constants $C$ and $\varepsilon_1$ depending only on $\alpha, \hat{\rho}, M, a, \gamma, \mu, \lambda$, and $\nu$ such that
\begin{gather}\label{3.10}
A_1(T)\leq  CC_0^\frac{2(\alpha+2)}{7\alpha+5}+C\int_0^T \sigma\|\nabla\mathbf{u}\|_{L^3}^3\mathrm{d}t
 +C\int_{0}^T\sigma^3\|P\|_{L^4}^4\mathrm{d}t,
\\
A_2(T)\leq CA_1(T)+CA_1^2(T)+C
 \Big(A_1(T)+A_1^\frac32(T)\Big)A_2^\frac12(T)
 +C\int_0^T\sigma^3\|P\|_{L^4}^4\mathrm{d}t, \label{3.11}
\end{gather}
provided $C_0\leq \varepsilon_1$.
\end{lemma}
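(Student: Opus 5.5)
To prove \eqref{3.10} I will follow the Hoff-type time-weighted energy method of \cite{Hoff95,SH12,LX19}. I multiply $\eqref{a1}_2$ by $\sigma\dot{\mathbf{u}}$ and integrate over $\mathbb{R}^2$. Writing $\mathbf{u}_t=\dot{\mathbf{u}}-\mathbf{u}\cdot\nabla\mathbf{u}$ in the viscous and pressure terms gives
\begin{align*}
\frac{\mathrm{d}}{\mathrm{d}t}\Big[\frac{\sigma}{2}\big(\mu\|\nabla\mathbf{u}\|_{L^2}^2+(\mu+\lambda)\|\divv\mathbf{u}\|_{L^2}^2\big)-\sigma\int P\divv\mathbf{u}\,\mathrm{d}\mathbf{x}\Big]+\sigma\|\sqrt{\rho}\dot{\mathbf{u}}\|_{L^2}^2
\le C\sigma'\|\nabla\mathbf{u}\|_{L^2}^2+C\sigma\|\nabla\mathbf{u}\|_{L^3}^3+C\sigma\int P|\nabla\mathbf{u}|^2\,\mathrm{d}\mathbf{x}+\text{(magnetic terms)}.
\end{align*}
For the pressure term I use $P_t+\divv(P\mathbf{u})+(\gamma-1)P\divv\mathbf{u}=0$ from \eqref{3.8}. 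The Lorentz force contributes $\int\sigma(\mathbf{B}\cdot\nabla\mathbf{B}-\frac12\nabla|\mathbf{B}|^2)\cdot\dot{\mathbf{u}}$. I write it as $-\frac{\mathrm{d}}{\mathrm{d}t}\int\sigma(B^iB^j\partial_iu^j-\frac12|\mathbf{B}|^2\divv\mathbf{u})$ plus terms containing $\mathbf{B}_t$ or $\mathbf{u}\cdot\nabla\mathbf{u}$. In these I substitute $\mathbf{B}_t$ from $\eqref{a1}_3$ and bound the resulting integrals by $\|\mathbf{B}\|_{L^4}^2\|\nabla\mathbf{u}\|_{L^2}$, $\||\mathbf{B}||\nabla\mathbf{B}|\|_{L^2}$ and $\|\Delta\mathbf{B}\|_{L^2}$.

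For $\nabla\mathbf{B}$ I multiply $\eqref{a1}_3$ by $-\sigma\Delta\mathbf{B}$. This gives $\frac{\mathrm{d}}{\mathrm{d}t}(\sigma\|\nabla\mathbf{B}\|_{L^2}^2)+\nu\sigma\|\Delta\mathbf{B}\|_{L^2}^2\le C\sigma\||\mathbf{u}||\nabla\mathbf{B}|\|^2_{L^2}+C\sigma\||\mathbf{B}||\nabla\mathbf{u}|\|_{L^2}^2$. To avoid $\|\mathbf{u}\|_{L^2}$, which is unavailable under far-field vacuum, I will integrate by parts using $\divv\mathbf{B}=0$ and Gagliardo--Nirenberg (Lemma \ref{l2.2}), so that only $\nabla\mathbf{u}$ appears. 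This should give $\|\mathbf{B}\|_{L^4}^2\|\nabla\mathbf{u}\|_{L^2}\|\nabla\mathbf{u}\|_{L^4}$ and $\|\mathbf{B}\|_{L^2}\|\nabla\mathbf{B}\|_{L^2}\|\nabla\mathbf{u}\|_{L^2}\|\Delta\mathbf{B}\|_{L^2}$, which can be absorbed or controlled by $C_0$ and the bound \eqref{3.5}.

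The terms $\int\sigma P|\nabla\mathbf{u}|^2$ and $\sigma\int P\divv\mathbf{u}$ are handled by Young's inequality, giving $\sigma\|\nabla\mathbf{u}\|_{L^3}^3+\sigma^{3}\|P\|_{L^4}^4$ plus lower-order terms. Splitting the $\sigma$-weights as $\sigma^{1/3}\cdot\sigma^{2/3}$ if necessary produces exactly the two integrals on the right of \eqref{3.10}. The remaining pieces I bound by $\int_0^T(\|\nabla\mathbf{u}\|_{L^2}^2+\|\nabla\mathbf{B}\|_{L^2}^2)\le C C_0$, using Lemma \ref{l3.1}. Lower-order leftovers such as $\|P\|_{L^2}^2\le C\|P\|_{L^1}^{a}\|\rho\|_{L^\theta}^b$ are controlled by $C_0$ through interpolation and $\|\rho\|_{L^\theta}\le2\hat\rho$. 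Since $C_0\le1$ and $\frac{2(\alpha+2)}{7\alpha+5}<1$, all such powers of $C_0$ are dominated by $C_0^{\frac{2(\alpha+2)}{7\alpha+5}}$.

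For \eqref{3.11} I apply $\sigma^3\dot u^j[\partial_t+\divv(\mathbf{u}\,\cdot)]$ to $\eqref{a1}_2^j$, as in \cite{Hoff95,LX19}. The standard commutator computation gives
\[
\frac{\mathrm{d}}{\mathrm{d}t}\Big(\frac{\sigma^3}{2}\|\sqrt\rho\dot{\mathbf{u}}\|_{L^2}^2\Big)+\mu\sigma^3\|\nabla\dot{\mathbf{u}}\|_{L^2}^2\le C\sigma^2\sigma'\|\sqrt\rho\dot{\mathbf{u}}\|^2_{L^2}+C\sigma^3\|\nabla\mathbf{u}\|_{L^4}^4+C\sigma^3\|P\|_{L^4}^4+\text{(magnetic)}.
\]
For $\|\nabla\mathbf{u}\|_{L^4}^4$ I use \eqref{2.4}: $\|\nabla\mathbf{u}\|_{L^4}\le C\|\sqrt\rho\dot{\mathbf{u}}\|_{L^2}+C\|P\|_{L^4}+C\|\mathbf{B}\|_{L^8}^2$, together with $\|\sqrt{\rho}\|_{L^4}\le C$. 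Then $\int\sigma^3\|\sqrt\rho\dot{\mathbf{u}}\|_{L^2}^4\le \sup(\sigma^2\|\sqrt\rho\dot{\mathbf{u}}\|_{L^2}^2)\int\sigma\|\sqrt\rho\dot{\mathbf{u}}\|^2_{L^2}$. This is bounded by $A_1\cdot(\sigma^{3}\cdots)^{1/2}$-type products, yielding the terms $(A_1+A_1^{3/2})A_2^{1/2}$ and $A_1^2$. For the magnetic part I multiply $\eqref{a1}_3$ by $\sigma^3|\mathbf{B}|^2\Delta\mathbf{B}$-type multipliers, after differentiating, to produce $\sigma^3\||\mathbf{B}||\nabla\mathbf{B}|\|_{L^2}^2$ and the dissipation $\sigma^3\||\mathbf{B}||\Delta\mathbf{B}|\|_{L^2}^2$. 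The Lorentz-force contributions to the $\dot{\mathbf{u}}$ equation are then estimated through $\||\mathbf{B}||\nabla\mathbf{B}|\|_{L^2}\|\nabla\dot{\mathbf{u}}\|_{L^2}$ and absorbed.

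The main obstacle is the $\mathbf{B}_t$-terms, the time derivatives of the Lorentz force in both steps. They must be closed without $\|\mathbf{u}\|_{L^p}$ and without an $L^\infty$ bound on $\rho$. I plan to handle them using only $\nabla\mathbf{u}$ through $\divv\mathbf{B}=0$, integrating by parts so that $\mathbf{u}$ appears only as $\mathbf{u}\cdot\nabla\mathbf{B}-\mathbf{B}\cdot\nabla\mathbf{u}=\divv(\mathbf{B}\otimes\mathbf{u}-\mathbf{u}\otimes\mathbf{B})$ paired against derivatives of $\mathbf{B}$. The resistivity $\nu\Delta\mathbf{B}$ then absorbs the top-order pieces.
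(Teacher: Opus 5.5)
Your plan for \eqref{3.11} has a genuine gap at the term $\int_0^T\sigma^3\|\nabla\mathbf{u}\|_{L^4}^4\,\mathrm{d}t$.

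Applying \eqref{2.4} with $p=4$ and raising it to the fourth power produces $\int_0^T\sigma^3\|\sqrt{\rho}\dot{\mathbf{u}}\|_{L^2}^4\,\mathrm{d}t$. Your splitting of this term needs $\sup_t\sigma^2\|\sqrt{\rho}\dot{\mathbf{u}}\|_{L^2}^2$, which is not contained in $A_1$ or $A_2$. On $(0,1)$ this quantity equals $\sigma^{-1}\cdot\sigma^3\|\sqrt{\rho}\dot{\mathbf{u}}\|_{L^2}^2$, while $A_2$ only bounds $\sigma^3\|\sqrt{\rho}\dot{\mathbf{u}}\|_{L^2}^2$ and $A_1$ controls $\sqrt{\rho}\dot{\mathbf{u}}$ only in time-integrated form. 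Rearranging weights does not help. For $f(t)=\|\sqrt{\rho}\dot{\mathbf{u}}\|_{L^2}^2$, the bounds $\sup\sigma^3 f<\infty$ and $\int_0^1\sigma f<\infty$ do not by themselves control $\int_0^1\sigma^3f^2$: take $f=t^{-3}$ on disjoint intervals of length $4^{-k}k^{-2}$ starting at $t=2^{-k}$, and $f=0$ elsewhere. The other possible split, $A_2\int_0^1\|\sqrt{\rho}\dot{\mathbf{u}}\|_{L^2}^2$, brings in the unweighted integral. Only $A_3$ controls that, by $3K$, which is not small and is not even a hypothesis of this lemma. You would get a non-absorbable $CKA_2$, and a quartic power can never produce the structure $(A_1+A_1^{3/2})A_2^{1/2}$ in \eqref{3.11}.

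The missing idea is \eqref{3.38}. Decompose $\mathbf{u}=\mathbf{w}_1+\mathbf{w}_2+\mathbf{w}_3$ as in \eqref{2.6} and interpolate only the $\rho\dot{\mathbf{u}}$-driven part: $\|\nabla\mathbf{w}_1\|_{L^4}^4\le C\|\nabla\mathbf{w}_1\|_{L^2}\|\nabla\mathbf{w}_1\|_{L^6}^3$. Here $\|\nabla\mathbf{w}_1\|_{L^6}\le C\|\rho\dot{\mathbf{u}}\|_{L^{3/2}}\le C\|\sqrt{\rho}\|_{L^6}\|\sqrt{\rho}\dot{\mathbf{u}}\|_{L^2}$ and $\|\nabla\mathbf{w}_1\|_{L^2}\le\|\nabla\mathbf{u}\|_{L^2}+C\|P\|_{L^2}+C\|\mathbf{B}\|_{L^4}^2$. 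The parts $\mathbf{w}_2,\mathbf{w}_3$ go straight into $\|P\|_{L^4}^4$ and $\||\mathbf{B}|^2\|_{L^4}^4\le CC_0\|\Delta\mathbf{B}\|_{L^2}^2$. Only the cubic term $(\|\nabla\mathbf{u}\|_{L^2}+1)\|\sqrt{\rho}\dot{\mathbf{u}}\|_{L^2}^3$ survives. It is controlled by $\int\sigma^3\|\sqrt{\rho}\dot{\mathbf{u}}\|_{L^2}^3\le\sup(\sigma^{3/2}\|\sqrt{\rho}\dot{\mathbf{u}}\|_{L^2})\int\sigma\|\sqrt{\rho}\dot{\mathbf{u}}\|_{L^2}^2\le A_2^{1/2}A_1$. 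The piece carrying $\|\nabla\mathbf{u}\|_{L^2}$ picks up an extra factor $\sup\sigma^{1/2}\|\nabla\mathbf{u}\|_{L^2}\le A_1^{1/2}$. This is exactly where $(A_1+A_1^{3/2})A_2^{1/2}$ comes from.

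There are two smaller issues.

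\textbf{The pressure term in \eqref{3.10}.} Two-factor Young gives $\sigma P|\nabla\mathbf{u}|^2\le\sigma|\nabla\mathbf{u}|^3+C\sigma P^3$. But $\int_1^T\|P\|_{L^3}^3\,\mathrm{d}t$ is not bounded by a power of $C_0$ uniformly in $T$, since \eqref{3.12} is only pointwise in time. No redistribution of $\sigma$-weights turns $\sigma P^3$ into $\sigma^3P^4$. The argument does work if you split time:
\begin{itemize}
\item on $(0,\sigma(T))$, use $\|P\|_{L^3}^3\le CC_0^{\frac{5\alpha+7}{7\alpha+5}}$;
\item on $(\sigma(T),T)$, use the three-factor Young inequality $P|\nabla\mathbf{u}|^2\le P^4+|\nabla\mathbf{u}|^3+|\nabla\mathbf{u}|^2$, whose last term is time-integrable by \eqref{3.7}.
\end{itemize}
Done this way, it is a legitimate and simpler alternative to \eqref{3.23}--\eqref{3.24} for \eqref{3.10} as stated.

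\textbf{Removing undifferentiated $\mathbf{u}$.} The mechanism you propose, divergence form plus integration by parts, does not remove $\mathbf{u}$: it leaves $\mathbf{u}$ paired with derivatives of the multiplier. What works is the cancellation built into the material derivative. After substituting $\mathbf{B}_t$ from $\eqref{a1}_3$, the term $\frac12\int\sigma\,\mathbf{u}\cdot\nabla|\mathbf{B}|^2\divv\mathbf{u}$ cancels the $\mathbf{u}\cdot\nabla\divv\mathbf{u}$ part of $\frac12\int\sigma|\mathbf{B}|^2\divv(\mathbf{u}\cdot\nabla\mathbf{u})$. The tension term behaves the same way, since $\partial_t(B^iB^j)+\mathbf{u}\cdot\nabla(B^iB^j)$ involves only $\nabla\mathbf{u}$ and $\Delta\mathbf{B}$. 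In the $\nabla\mathbf{B}$ estimate, $\int(\mathbf{u}\cdot\nabla\mathbf{B})\cdot\Delta\mathbf{B}$ integrates by parts to terms of the form $\nabla\mathbf{u}|\nabla\mathbf{B}|^2$.
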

\begin{proof}
Using H\"older's inequality and Lemma \ref{l3.1}, we obtain the following bound of $\|P\|_{L^q}$ with $q\in[1,20]$:
\begin{equation}\label{3.12}
 \|P\|_{L^q}=a\|\rho\|_{L^{q\gamma}}^\gamma\leq  a\|\rho\|_{L^\gamma}^\frac{4\gamma(2\alpha+1)-q\gamma(\alpha-1)}{q(7\alpha+5)}
 \|\rho\|_{L^{\theta}}^\frac{4\gamma(q-1)(2\alpha+1)}{q(7\alpha+5)}
 \leq C(a,\gamma,\hat{\rho})C_0^\frac{4(2\alpha+1)-q(\alpha-1)}{q(7\alpha+5)}, \ \ 0\leq t\leq T.
\end{equation}
The momentum equation $\eqref{a1}_2$ can be rewritten as
\begin{equation}\label{3.13}
\rho\dot{\mathbf{u}}+\nabla P=\mu\Delta\mathbf{u}+(\mu+\lambda)\nabla\divv \mathbf{u}
+\mathbf{B}\cdot\nabla\mathbf{B}-\frac12\nabla|\mathbf{B}|^2.
\end{equation}

\textbf{Estimate for $A_1(T)$}.
Multiplying \eqref{3.13} by $\sigma\dot{\mathbf{u}}$ and integrating the resultant over $\mathbb{R}^2$, one gets that
\begin{align}\label{3.14}
\int\sigma\rho |\dot{\mathbf{u}}|^2\mathrm{d}\mathbf{x}&=-\int\sigma\dot{\mathbf{u}}\cdot\nabla P\mathrm{d}\mathbf{x}+
\int\sigma\big[\mu\Delta\mathbf{u}\cdot\dot{\mathbf{u}}
+(\mu+\lambda)\dot{\mathbf{u}}\cdot\nabla\divv \mathbf{u}\big]\mathrm{d}\mathbf{x}\notag\\
&\quad
-\frac12\int\sigma\dot{\mathbf{u}}\cdot\nabla|\mathbf{B}|^2
\mathrm{d}\mathbf{x}
+\int\sigma\mathbf{B}\cdot\nabla\mathbf{B}\cdot\dot{\mathbf{u}}\mathrm{d}\mathbf{x}
\triangleq\sum_{i=1}^{4}\mathcal{I}_i.
\end{align}
For the first two terms on the right-hand side of \eqref{3.14}, it follows from $\eqref{a1}_1$ that
\begin{align}\label{3.15}
\mathcal{I}_1
&=\int \sigma P\divv\mathbf{u}_t\mathrm{d}\mathbf{x}
-\int\sigma\mathbf{u}\cdot\nabla\mathbf{u}\cdot\nabla P\mathrm{d}\mathbf{x}
\notag\\
&=\frac{\mathrm{d}}{\mathrm{d}t}\int\sigma P\divv\mathbf{u} \mathrm{d}\mathbf{x}
-\int\sigma\divv\mathbf{u}P'(\rho)\rho_t\mathrm{d}\mathbf{x}
-\int\sigma' P\divv\mathbf{u} \mathrm{d}\mathbf{x}
-\int\sigma\mathbf{u}\cdot\nabla\mathbf{u}\cdot\nabla P\mathrm{d}\mathbf{x}\notag\\
&=\frac{\mathrm{d}}{\mathrm{d}t}\int \sigma P\divv\mathbf{u}\mathrm{d}\mathbf{x}+
\int\sigma(\divv\mathbf{u})^2P'(\rho)\rho\mathrm{d}\mathbf{x}+
\int\sigma\mathbf{u}\cdot\nabla P\divv\mathbf{u}\mathrm{d}\mathbf{x}
-\int\sigma' P\divv\mathbf{u} \mathrm{d}\mathbf{x}\notag\\
&\quad
-\int\sigma\mathbf{u}\cdot\nabla\mathbf{u}\cdot\nabla P\mathrm{d}\mathbf{x}\notag\\
&=\frac{\mathrm{d}}{\mathrm{d}t}\int\sigma P\divv\mathbf{u}\mathrm{d}\mathbf{x}+
\int\sigma(\divv\mathbf{u})^2(P'(\rho)\rho-P)\mathrm{d}\mathbf{x}
+\int\sigma P\partial_iu^j\partial_ju^i\mathrm{d}\mathbf{x}
-\int\sigma' P\divv\mathbf{u} \mathrm{d}\mathbf{x}\notag\\
&\leq\frac{\mathrm{d}}{\mathrm{d}t}\int\sigma P\divv\mathbf{u}\mathrm{d}\mathbf{x}+
C\int\sigma P|\nabla\mathbf{u}|^2\mathrm{d}\mathbf{x}
+C\int\sigma' P|\nabla\mathbf{u}|\mathrm{d}\mathbf{x},\\
\mathcal{I}_2
&=
-\frac{\mu}{2}\frac{\mathrm{d}}{\mathrm{d}t}\big(\sigma\|\nabla\mathbf{u}\|^2_{L^2}\big)
+\frac{\mu}{2}\sigma'\|\nabla\mathbf{u}\|^2_{L^2}
-\mu\int\sigma\partial_iu^j\partial_i(u^k\partial_ku^j)\mathrm{d}\mathbf{x}
-\frac{\mu+\lambda}{2}\frac{\mathrm{d}}{\mathrm{d}t}\big(\sigma\|\divv\mathbf{u}\|^2_{L^2}\big)\notag\\
&\quad+\frac{\mu+\lambda}{2}\sigma'\|\divv\mathbf{u}\|^2_{L^2}
-(\mu+\lambda)\int\sigma\divv \mathbf{u}\divv (\mathbf{u}\cdot\nabla\mathbf{u})\mathrm{d}\mathbf{x}\notag\\
&\leq -\frac{1}{2}\frac{\mathrm{d}}{\mathrm{d}t}\big[\mu\sigma\|\nabla\mathbf{u}\|^2_{L^2}+(\mu+\lambda)\sigma\|\divv\mathbf{u}\|^2_{L^2}\big]
+C\|\nabla\mathbf{u}\|^2_{L^2}+C\sigma\|\nabla\mathbf{u}\|^3_{L^3},\label{3.16}
\end{align}
owing to $0\leq \sigma,\sigma'\leq 1$.
Using $\eqref{a1}_3$ and \eqref{3.7}, we get that
\begin{align}\label{3.17}
 \mathcal{I}_3&=\frac12\int\sigma|\mathbf{B}|^2\divv\mathbf{u}_t\mathrm{d}\mathbf{x}
 +\frac12\int\sigma|\mathbf{B}|^2\divv(\mathbf{u}\cdot\nabla\mathbf{u})\mathrm{d}\mathbf{x}\notag\\
 &=\frac12\frac{\mathrm{d}}{\mathrm{d}t}\int\sigma|\mathbf{B}|^2\divv\mathbf{u}\mathrm{d}\mathbf{x}
-\frac12\int\sigma'|\mathbf{B}|^2\divv\mathbf{u}\mathrm{d}\mathbf{x}
+\frac12\int\sigma|\mathbf{B}|^2\partial_iu^j\partial_ju^i\mathrm{d}\mathbf{x}
 -\frac12\int\sigma|\mathbf{B}|^2(\divv\mathbf{u})^2\mathrm{d}\mathbf{x}\notag\\
 &\quad+\int\sigma\big(-\mathbf{B}\cdot\nabla\mathbf{u}+\mathbf{B}\divv\mathbf{u}
 -\nu\Delta\mathbf{B}\big)\cdot\mathbf{B}\divv\mathbf{u}\mathrm{d}\mathbf{x}\notag\\
 &\leq \frac12\frac{\mathrm{d}}{\mathrm{d}t}\int\sigma|\mathbf{B}|^2\divv\mathbf{u}\mathrm{d}\mathbf{x}
 +C\|\nabla\mathbf{u}\|^2_{L^2}+C\|\nabla\mathbf{B}\|^2_{L^2}+C\sigma\|\nabla\mathbf{u}\|_{L^3}^2
 \|\mathbf{B}\|_{L^2}^\frac43\|\Delta\mathbf{B}\|_{L^2}^\frac23+\frac{\nu}{16}\sigma\|\Delta\mathbf{B}\|_{L^2}^2
 \notag\\
 &\leq \frac12\frac{\mathrm{d}}{\mathrm{d}t}\int\sigma|\mathbf{B}|^2\divv\mathbf{u}\mathrm{d}\mathbf{x}
 +C\|\nabla\mathbf{u}\|^2_{L^2}+C\|\nabla\mathbf{B}\|^2_{L^2}+C\sigma\|\nabla\mathbf{u}\|_{L^3}^3
 +\frac{\nu}{8}\sigma\|\Delta\mathbf{B}\|_{L^2}^2.
\end{align}
A similar argument based on integration by parts yields that
\begin{align}\label{3.18}
 \mathcal{I}_4&=-\frac{\mathrm{d}}{\mathrm{d}t}\int\sigma\mathbf{B}\cdot\nabla\mathbf{u}\cdot\mathbf{B}\mathrm{d}\mathbf{x}
 +\int\sigma'\mathbf{B}\cdot\nabla\mathbf{u}\cdot\mathbf{B}\mathrm{d}\mathbf{x}
 +2\int\sigma(\mathbf{B}\otimes\mathbf{B}_t):\nabla\mathbf{u}\mathrm{d}\mathbf{x}
 +\int\sigma\mathbf{B}\cdot\nabla\mathbf{B}\cdot(\mathbf{u}\cdot\nabla)\mathbf{u}\mathrm{d}\mathbf{x}\notag\\
&\leq -\frac{\mathrm{d}}{\mathrm{d}t}\int\sigma\mathbf{B}\cdot\nabla\mathbf{u}\cdot\mathbf{B}\mathrm{d}\mathbf{x}
 +C\|\nabla\mathbf{u}\|^2_{L^2}+C\|\nabla\mathbf{B}\|^2_{L^2}+C\sigma\|\nabla\mathbf{u}\|_{L^3}^3
 +\frac{\nu}{8}\sigma\|\Delta\mathbf{B}\|_{L^2}^2.
\end{align}
In addition, multiplying $\eqref{a1}_3$ by $\sigma\Delta\mathbf{B}$ and integration by parts, we have
\begin{align}\label{3.19}
&\frac12\frac{\mathrm{d}}{\mathrm{d}t}\int\sigma|\nabla \mathbf{B}|^2\mathrm{d}\mathbf{x}+
\int\nu\sigma|\Delta \mathbf{B}|^2\mathrm{d}\mathbf{x}\notag\\
&\leq C\int\sigma'|\nabla \mathbf{B}|^2\mathrm{d}\mathbf{x}
+C\int\sigma|\nabla \mathbf{u}||\nabla \mathbf{B}|^2\mathrm{d}\mathbf{x}
+C\int\sigma|\nabla \mathbf{u}||\mathbf{B}||\Delta\mathbf{B}|\mathrm{d}\mathbf{x}\notag\\
&\leq C\|\nabla\mathbf{B}\|^2_{L^2}+C\sigma\|\nabla\mathbf{u}\|_{L^3}
 \Big(\|\nabla\mathbf{B}\|_{L^2}^\frac43\|\Delta\mathbf{B}\|_{L^2}^\frac23+
 \|\mathbf{B}\|_{L^2}^\frac23\|\Delta\mathbf{B}\|_{L^2}^\frac43\Big)
 \notag\\
&\leq C\|\nabla\mathbf{B}\|^2_{L^2}+C\sigma\|\nabla\mathbf{B}\|_{L^2}^4
+C\sigma\|\nabla\mathbf{u}\|_{L^3}^3+\frac{\nu}{4}\sigma\|\Delta\mathbf{B}\|_{L^2}^2.
\end{align}
Thus, substituting \eqref{3.15}--\eqref{3.18} into \eqref{3.14} and adding \eqref{3.19} multiplied by $(2\widetilde{C}+1)$, we obtain that
\begin{align}\label{3.20}
&\frac{\mathrm{d}}{\mathrm{d}t}\big(\sigma \mathcal{E}_1(t)+\widetilde{C}\sigma\|\nabla\mathbf{B}\|^2_{L^2}\big)+\frac12\int\sigma\big(\rho |\dot{\mathbf{u}}|^2+\nu|\Delta \mathbf{B}|^2\big)\mathrm{d}\mathbf{x}\notag\\
&\leq C\big(\|\nabla\mathbf{u}\|^2_{L^2}+\|\nabla\mathbf{B}\|^2_{L^2}\big)
+C\sigma\big(\|\nabla\mathbf{u}\|^3_{L^3}+\|\nabla\mathbf{B}\|_{L^2}^4\big)
+C\int\sigma' P|\nabla\mathbf{u}|\mathrm{d}\mathbf{x}
+C\int\sigma P|\nabla\mathbf{u}|^2\mathrm{d}\mathbf{x},
\end{align}
where
\begin{align}\label{3.21}
  \mathcal{E}_1(t)&\triangleq \frac{\mu}{2}\|\nabla\mathbf{u}\|^2_{L^2}+\frac{\mu+\lambda}{2}\|\divv\mathbf{u}\|^2_{L^2}
  +\frac{1}{2}\|\nabla\mathbf{B}\|^2_{L^2}
  -\int P\divv\mathbf{u}\mathrm{d}\mathbf{x}
  -\frac12\int|\mathbf{B}|^2\divv\mathbf{u}\mathrm{d}\mathbf{x}+\int\mathbf{B}\cdot\nabla\mathbf{u}\cdot\mathbf{B}\mathrm{d}\mathbf{x}\notag\\
  &\geq\frac{\mu}{2}\|\nabla\mathbf{u}\|^2_{L^2}+\frac{\mu+\lambda}{2}\|\divv\mathbf{u}\|^2_{L^2}
  +\frac{1}{2}\|\nabla\mathbf{B}\|^2_{L^2}
  -\|P\|_{L^2}\|\divv\mathbf{u}\|_{L^2}-C\|\mathbf{B}\|_{L^2}\|\nabla\mathbf{B}\|_{L^2}\|\nabla\mathbf{u}\|_{L^2}
  \notag\\
  &\geq\frac{\mu}{4}\|\nabla\mathbf{u}\|^2_{L^2}+\frac{\mu+\lambda}{4}\|\divv\mathbf{u}\|^2_{L^2}
  -\frac{\widetilde{C}}{2}\|\nabla\mathbf{B}\|^2_{L^2}
  -C(\hat{\rho})C_0^\frac{6(\alpha+1)}{7\alpha+5},
\end{align}
for some positive constant $\widetilde{C}=\widetilde{C}(\mu)$.

Integrating \eqref{3.20} with respect to $t$ over $(0,T)$, one infers from \eqref{3.7} that
\begin{align}\label{3.22}
 &\sup_{t\in[0,T]}\sigma\big(\|\nabla \mathbf{u}\|_{L^2}^2+\|\nabla \mathbf{B}\|_{L^2}^2\big)+\int_0^T\int\sigma\big(\rho |\dot{\mathbf{u}}|^2+|\Delta \mathbf{B}|^2\big)\mathrm{d}\mathbf{x}\mathrm{d}t
 \notag\\&\leq C(\hat{\rho})C_0^\frac{6(\alpha+1)}{7\alpha+5}+
C\int_0^T\int \sigma|\nabla\mathbf{u}|^3\mathrm{d}\mathbf{x}\mathrm{d}t
+C\int_0^T\int\sigma' P|\nabla\mathbf{u}|\mathrm{d}\mathbf{x}\mathrm{d}t
 +C\int_0^T\int\sigma P|\nabla\mathbf{u}|^2\mathrm{d}\mathbf{x}\mathrm{d}t\notag\\
 &\leq C(\hat{\rho})C_0^\frac{3(\alpha+1)}{7\alpha+5}+C\int_0^T\int \sigma|\nabla\mathbf{u}|^3\mathrm{d}\mathbf{x}\mathrm{d}t
 +C\int_0^T\int\sigma P|\nabla\mathbf{u}|^2\mathrm{d}\mathbf{x}\mathrm{d}t
\end{align}
provided $C_0\leq 1$, where we have used
\begin{equation*}
  \int_0^T\int\sigma' P|\nabla\mathbf{u}|\mathrm{d}\mathbf{x}\mathrm{d}t\leq
   \int_0^{\sigma(T)}\|P\|_{L^2}\|\nabla\mathbf{u}\|_{L^2}\mathrm{d}t
   \leq C(\hat{\rho})C_0^\frac{3(\alpha+1)}{7\alpha+5}\int_0^{\sigma(T)}\big(\|\nabla\mathbf{u}\|_{L^2}^2\big)^\frac12\mathrm{d}t
   \leq C(\hat{\rho})C_0^\frac{3(\alpha+1)}{7\alpha+5}.
\end{equation*}
Moreover, it follows from \eqref{3.12}, \eqref{2.4}, and H\"older's inequality that
\begin{align}\label{3.23}
  \int_0^{\sigma(T)}\int\sigma P|\nabla\mathbf{u}|^2\mathrm{d}\mathbf{x}\mathrm{d}t
  &\leq  C\int_0^{\sigma(T)}\sigma\|P\|_{L^3}\|\nabla\mathbf{u}\|_{L^2}\|\nabla\mathbf{u}\|_{L^6}\mathrm{d}t\notag\\
  &\leq C(\hat{\rho})C_0^\frac{5\alpha+7}{3(7\alpha+5)}
  \int_0^{\sigma(T)}\sigma \|\nabla\mathbf{u}\|_{L^2}\big(\|\sqrt{\rho}\|_{L^6}\|\sqrt{\rho}\dot{\mathbf{u}}\|_{L^2}+\|P\|_{L^6}+\||\mathbf{B}|^2\|_{L^6}\big)\mathrm{d}t\notag\\
  &\leq C(\hat{\rho})C_0^\frac{5\alpha+7}{3(7\alpha+5)}
   \int_0^{\sigma(T)}\sigma\big( \|\nabla\mathbf{u}\|_{L^2}^2+\|\rho\|_{L^3}\|\sqrt{\rho}\dot{\mathbf{u}}\|_{L^2}^2
   +\|P\|_{L^6}\|\nabla\mathbf{u}\|_{L^2}+\|\nabla\mathbf{B}\|_{L^2}^4\big)\mathrm{d}t\notag\\
   &\leq C(\hat{\rho})C_0^\frac{2(\alpha+2)}{7\alpha+5}+C_1(\hat{\rho})C_0^\frac{2(5\alpha+7)}{3(7\alpha+5)}A_1(T),
   \\
  \int_{\sigma(T)}^T\int\sigma P|\nabla\mathbf{u}|^2\mathrm{d}\mathbf{x}\mathrm{d}t
  &\leq  \int_{\sigma(T)}^T\|P\|_{L^4}\|\nabla\mathbf{u}\|_{L^2}\|\nabla\mathbf{u}\|_{L^4}\mathrm{d}t\notag\\
  &\leq C\int_{\sigma(T)}^T\|P\|_{L^4}\|\nabla\mathbf{u}\|_{L^2}\big(\|\sqrt{\rho}\|_{L^4}\|\sqrt{\rho}\dot{\mathbf{u}}\|_{L^2}+\|P\|_{L^4}+\||\mathbf{B}|^2\|_{L^4}\big)\mathrm{d}t\notag\\
  &\leq
   C\int_{\sigma(T)}^T\Big( \|\nabla\mathbf{u}\|_{L^2}^2+\|P\|_{L^4}^2\|\rho\|_{L^2}\|\sqrt{\rho}\dot{\mathbf{u}}\|_{L^2}^2+\|P\|_{L^4}^4
   +\|P\|_{L^4}^\frac43\|\mathbf{B}\|_{L^2}\|\nabla\mathbf{B}\|_{L^2}^4\Big)\mathrm{d}t\notag\\
   &\leq CC_0+C_2(\hat{\rho})C_0^\frac{5\alpha+7}{7\alpha+5}A_1(T)
   +\int_{\sigma(T)}^T\sigma^3\|P\|_{L^4}^4\mathrm{d}t.\label{3.24}
\end{align}
Putting \eqref{3.23} and \eqref{3.24} into \eqref{3.22} implies that
\begin{align*}
   &\sup_{t\in[0,T]}\sigma\big(\|\nabla \mathbf{u}\|_{L^2}^2+\|\nabla \mathbf{B}\|_{L^2}^2\big)+\int_0^T\int\sigma\big(\rho |\dot{\mathbf{u}}|^2+|\Delta \mathbf{B}|^2\big)\mathrm{d}\mathbf{x}\mathrm{d}t\notag\\
  &\leq
  C(\hat{\rho})C_0^\frac{2(\alpha+2)}{7\alpha+5}+2C_1(\hat{\rho})C_0^\frac{2(5\alpha+7)}{3(7\alpha+5)}A_1(T)
  +C\int_0^T\int \sigma|\nabla\mathbf{u}|^3\mathrm{d}\mathbf{x}\mathrm{d}t
+\int_{\sigma(T)}^T\sigma^3\|P\|_{L^4}^4\mathrm{d}t,
\end{align*}
which along with \eqref{3.2} immediately yields \eqref{3.10} provided
\begin{equation*}
  C_0\leq\varepsilon_1\triangleq \min\left\{1,\bigg(\frac{1}{4C_1(\hat{\rho})}\bigg)^\frac{3(7\alpha+5)}{2(5\alpha+7)}\right\}.
\end{equation*}

\textbf{Estimate for $A_2(T)$}.
Operating $\sigma^3\dot{u}^j[\partial/\partial t+\divv({\mathbf{u}}\cdot)]$ on $\eqref{3.13}^j$, summing all the equalities with respect to $j$, and integrating the resultant over $\mathbb{R}^2$,
we obtain that
\begin{align}\label{3.25}
&\frac{1}{2}\frac{\mathrm{d}}{\mathrm{d}t}\int\sigma^3\rho|\dot{\mathbf{u}}|^2\mathrm{d}\mathbf{x}
-\frac{3}{2}\sigma^2\sigma'\int\rho|\dot{\mathbf{u}}|^2\mathrm{d}\mathbf{x}\notag\\
&=-\sigma^3\int\dot{u}^j[\partial_j P_{t}+\divv(\mathbf{u}\partial_{j}P)]\mathrm{d}\mathbf{x}
+\mu\sigma^3\int\dot{u}^j\big[\Delta u_{t}^j+\divv\big( \mathbf{u}\Delta u^{j}\big)\big]\mathrm{d}\mathbf{x}\notag\\
&\quad+(\mu+\lambda)\sigma^3\int\dot{u}^j[\partial_j\divv\mathbf{u}_t +\divv(\mathbf{u}\partial_j\divv\mathbf{u})]\mathrm{d}\mathbf{x}
-\sigma^3\int\dot{u}^j\big[\partial_j(B^iB^i_t)+\divv\big(B^i\partial_jB^i\mathbf{u}\big)\big]
\mathrm{d}\mathbf{x}\notag\\
&\quad+\sigma^3\int\dot{u}^j\big[\partial_t\big(B^i\partial_iB^j\big)
+\divv\big(B^i\partial_iB^j\mathbf{u}\big)\big]
\mathrm{d}\mathbf{x}
\triangleq \sum_{i=1}^{5}\mathcal{J}_i.
\end{align}
It follows from \eqref{3.7} and Cauchy--Schwarz inequality that
\begin{align}\label{3.26}
\mathcal{J}_1&=\sigma^3\int P_{t}\divv\dot{\mathbf{u}}\mathrm{d}\mathbf{x}-\sigma^3\int\dot{\mathbf{u}}\cdot\nabla\divv(P\mathbf{u})\mathrm{d}\mathbf{x}
+\sigma^3\int\dot{u}^j\divv(P\partial_{j}\mathbf{u})\mathrm{d}\mathbf{x}\notag\\
&=\sigma^3\int \big(P_{t}+\divv(P\mathbf{u})\big)\divv\dot{\mathbf{u}}\mathrm{d}\mathbf{x}
+\sigma^3\int\dot{\mathbf{u}}\cdot\nabla\mathbf{u}\cdot\nabla P\mathrm{d}\mathbf{x}
+\sigma^3\int P\dot{\mathbf{u}}\cdot\nabla\divv\mathbf{u}\mathrm{d}\mathbf{x}\notag\\
&=-\sigma^3\int(\gamma-1)P\divv\mathbf{u}\divv\dot{\mathbf{u}}\mathrm{d}\mathbf{x}
-\sigma^3\int P\partial_i\dot{u}^j\partial_ju^i \mathrm{d}\mathbf{x}
\notag\\
&\leq \frac{\mu}{8}\sigma^3\|\nabla\dot{\mathbf{u}}\|_{L^2}^2
+C\sigma^3\|\nabla\mathbf{u}\|_{L^4}^4+C\sigma^3\|P\|_{L^4}^4.
\end{align}
Using integration by parts, one gets that
\begin{align}\label{3.27}
\mathcal{J}_2&=\mu\sigma^3\int\dot{u}^j\big[\Delta \dot{u}^j-\Delta(\mathbf{u}\cdot\nabla u^j)+\divv\big( \mathbf{u}\Delta u^{j}\big)\big]\mathrm{d}\mathbf{x}\notag\\
&=\mu\sigma^3\int\big[-|\nabla\dot{\mathbf{u}}|^2+\dot{u}^j_i(u^ku_k^j)_i
-\dot{u}^j_i(u^ku^j_i)_k-\dot{u}^j(u^k_iu^j_i)_k\big]\mathrm{d}\mathbf{x}\notag\\
&=\mu\sigma^3\int\big[-|\nabla\dot{\mathbf{u}}|^2+\dot{u}^j_i(u^ku_k^j)_i
-\dot{u}^j_i(u^ku^j_i)_k+\dot{u}_k^j(u^k_iu^j_i)\big]\mathrm{d}\mathbf{x}\notag\\
&\leq-\frac{3\mu}{4}\sigma^3\|\nabla\dot{\mathbf{u}}\|_{L^2}^2+C\sigma^3\|\nabla\mathbf{u}\|_{L^4}^4,\\
\mathcal{J}_3
&=(\mu+\lambda)\sigma^3\int\dot{u}^j[\partial_j\divv\mathbf{u}_t+ \partial_j\divv(\mathbf{u}\divv\mathbf{u})-\divv(\partial_j\mathbf{u}\divv\mathbf{u})]\mathrm{d}\mathbf{x}\notag\\
&=-(\mu+\lambda)\sigma^3\int\divv\dot{\mathbf{u}}[\divv\mathbf{u}_t+\divv(\mathbf{u}\divv\mathbf{u})]\mathrm{d}\mathbf{x}
-(\mu+\lambda)\sigma^3\int\dot{u}^j\divv(\partial_j\mathbf{u}\divv\mathbf{u})\mathrm{d}\mathbf{x}\notag\\
&=-(\mu+\lambda)\sigma^3\int\big[\divv\dot{\mathbf{u}}
\big(\divv\dot{\mathbf{u}}-\partial_iu^j\partial_ju^i+(\divv\mathbf{u})^2\big)
-\partial_j\mathbf{u}\cdot\nabla\dot{u}^j\divv\mathbf{u}\big]\mathrm{d}\mathbf{x}\notag\\
&\leq-\frac{\mu+\lambda}{2}\sigma^3\|\divv\dot{\mathbf{u}}\|_{L^2}^2
+\frac{\mu}{8}\sigma^3\|\nabla\dot{\mathbf{u}}\|_{L^2}^2+C\sigma^3\|\nabla\mathbf{u}\|_{L^4}^4.\label{3.28}
\end{align}
In addition, we deduce from $\eqref{a1}_3$ that
\begin{align}\label{3.29}
 \mathcal{J}_{4}
&=\sigma^3\int\big(\partial_j\dot{u}^jB^iB^i_t
+\partial_k\dot{u}^jB^i\partial_jB^iu^k\big)\mathrm{d}\mathbf{x}
\notag\\
&=\sigma^3\int\Big[\partial_j\dot{u}^j(\mathbf{B}\cdot\nabla\mathbf{u}
 +\nu\Delta\mathbf{B}-\mathbf{B}\divv\mathbf{u})\cdot\mathbf{B}
-\frac12\partial_k\dot{u}^j\partial_ju^k B^i B^i
+\frac12\partial_j\dot{u}^j\partial_ku^k B^i B^i\Big]\mathrm{d}\mathbf{x}
\notag\\
&\leq C\sigma^3\int|\nabla\dot{\mathbf{u}}||\nabla\mathbf{u}||\mathbf{B}|^2\mathrm{d}\mathbf{x}
+C\sigma^3\int|\nabla\dot{\mathbf{u}}||\mathbf{B}||\Delta\mathbf{B}|\mathrm{d}\mathbf{x}\notag\\
&\leq \frac{\mu\sigma^3}{16}\|\nabla \dot{\mathbf{u}}\|_{L^2}^2
+C\sigma^3\|\nabla\mathbf{u}\|_{L^4}^4+C\sigma^3\||\mathbf{B}|^2\|_{L^4}^4
+\frac{\mu}{4}\sigma^3\||\mathbf{B}||\Delta\mathbf{B}|\|_{L^2}^2,
\\
\mathcal{J}_{5}
&=-\sigma^3\int\big[\partial_i\dot{u}^j\big(B^jB^i_t+B^j_tB^i\big)
+\partial_k\dot{u}^jB^i\partial_iB^ju^k\big]\mathrm{d}\mathbf{x}
\notag\\
&\leq \frac{\mu\sigma^3}{16}\|\nabla \dot{\mathbf{u}}\|_{L^2}^2
+C\sigma^3\|\nabla\mathbf{u}\|_{L^4}^4+C\sigma^3\||\mathbf{B}|^2\|_{L^4}^4
+\frac{\mu}{4}\sigma^3\||\mathbf{B}||\Delta\mathbf{B}|\|_{L^2}^2.\label{3.30}
\end{align}
Hence, substituting \eqref{3.26}--\eqref{3.30} into \eqref{3.25}, we find that
\begin{align}\label{3.31}
\frac{\mathrm{d}}{\mathrm{d}t}\int\sigma^3\rho|\dot{\mathbf{u}}|^2\mathrm{d}\mathbf{x}
+\sigma^3\|\nabla\dot{\mathbf{u}}\|_{L^2}^2&\leq C\sigma^2\sigma'\|\sqrt{\rho}\dot{\mathbf{u}}\|_{L^2}^2+C\sigma^3\|P\|_{L^4}^4
+C\sigma^3\|\nabla\mathbf{u}\|_{L^4}^4
\notag\\&\quad
+C\sigma^3\||\mathbf{B}|^2\|_{L^4}^4
+\sigma^3\||\mathbf{B}||\Delta\mathbf{B}|\|_{L^2}^2.
\end{align}
Integrating \eqref{3.31} over $(0,T)$ implies that
\begin{align}\label{3.32}
\sup_{t\in[0,T]}\big(\sigma^3\|\sqrt{\rho}\dot{\mathbf{u}}\|_{L^2}^2\big)
 +\int_0^T\sigma^3\|\nabla\dot{\mathbf{u}}\|_{L^2}^2\mathrm{d}t
 &\leq CA_1(T)+C\int_0^T\sigma^3\|P\|_{L^4}^4\mathrm{d}t
 +C\int_0^T\sigma^3\|\nabla\mathbf{u}\|_{L^4}^4\mathrm{d}t
\notag\\
 &\quad
+CC_0\int_0^T\sigma^3\|\mathbf{B}\|_{L^4}^4\|\Delta\mathbf{B}\|_{L^2}^2\mathrm{d}t
 +\int_0^T\sigma^3\||\mathbf{B}||\Delta\mathbf{B}|\|_{L^2}^2\mathrm{d}t,
\end{align}
where we have used
\begin{equation}\label{ooz}
  \||\mathbf{B}||\nabla\mathbf{B}|\|_{L^2}^2\leq
  \|\mathbf{B}\|_{L^4}^2\|\nabla\mathbf{B}\|_{L^4}^2
  \leq C\|\mathbf{B}\|_{L^2}\|\nabla\mathbf{B}\|_{L^2}^2\|\Delta\mathbf{B}\|_{L^2}
  \leq C\|\mathbf{B}\|_{L^2}^2\|\Delta\mathbf{B}\|_{L^2}^2
   \leq CC_0\|\Delta\mathbf{B}\|_{L^2}^2.
\end{equation}

To estimate $\||\mathbf{B}||\Delta\mathbf{B}|\|_{L^2}^2$, we consider
\begin{equation*}
  \widehat{B}(a_1,a_2)\triangleq a_1B^1+a_2B^2, \ \  \widehat{u}(a_1,a_2)\triangleq a_1u^1+a_2u^2, \ \
  a_1,a_2\in\{-1,0,1\}.
\end{equation*}
Then one obtains from \eqref{a1}$_3$ that
\begin{equation}\label{3.33}
  \widehat{B}_t-\nu\Delta\widehat{B}=\mathbf{B}\cdot\nabla\widehat{u}-\mathbf{u}\cdot\nabla\widehat{B}-\widehat{B}\divv\mathbf{u}.
\end{equation}
Multiplying \eqref{3.33} by $4\widehat{B}\Delta|\widehat{B}|^2$ and integrating the resulting equation over $\mathbb{R}^2$ yields that
\begin{align*}
 \frac{\mathrm{d}}{\mathrm{d}t}\|\nabla|\widehat{B}|^2\|_{L^2}^2
 +2\nu\|\Delta|\widehat{B}|^2\|_{L^2}^2
 &\leq
 C\int\big|\Delta|\widehat{B}|^2\big|\big(|\nabla\widehat{B}|^2+|\mathbf{B}|^2|\nabla\mathbf{u}|\big)
 \mathrm{d}\mathbf{x}\notag\\
 &\leq
 C\|\nabla\mathbf{u}\|_{L^4}^4+C\|\nabla\mathbf{B}\|_{L^4}^4
 +C\||\mathbf{B}|^2\|_{L^4}^4
 +\nu\|\Delta|\widehat{B}|^2\|_{L^2}^2.
\end{align*}
Multiplying the above inequality by $\sigma^3$ and integrating the resultant over $(0,T)$ gives that
\begin{align}\label{3.34}
  &\sup_{t\in[0,T]}\big(\sigma^3\|\nabla|\widehat{B}|^2\|_{L^2}^2\big)+\int_0^T\nu\sigma^3\|\Delta|\widehat{B}|^2\|_{L^2}^2\mathrm{d}t
  \notag\\
  &\leq C\int_0^T\sigma^2\sigma'\|\nabla|\widehat{B}|^2\|_{L^2}^2\mathrm{d}t
  +C\int_0^T\sigma^3\|\nabla\mathbf{u}\|_{L^4}^4\mathrm{d}t
  +C\int_0^T\sigma^3\|\nabla\mathbf{B}\|_{L^4}^4\mathrm{d}t
  +C\int_0^T\sigma^3\||\mathbf{B}|^2\|_{L^4}^4\mathrm{d}t\notag\\
  &\leq
  C\int_0^T\sigma^3\|\nabla\mathbf{u}\|_{L^4}^4\mathrm{d}t  +C\int_0^T\sigma^3\|\nabla\mathbf{B}\|_{L^2}^2\|\Delta\mathbf{B}\|_{L^2}^2\mathrm{d}t
  +C\int_0^T\sigma^2\big(\|\mathbf{B}\|_{L^4}^4+1\big)\|\mathbf{B}\|_{L^2}^2\|\Delta\mathbf{B}\|_{L^2}^2
  \mathrm{d}t.
\end{align}
In addition, multiplying $\eqref{a1}_3$ by $4|\mathbf{B}|^2\mathbf{B}$ and integrating the resultant over $\mathbb{R}^2$ leads to
\begin{align*}
\frac{\mathrm{d}}{\mathrm{d}t}\|\mathbf{B}\|_{L^4}^4
+4\nu\||\mathbf{B}||\nabla\mathbf{B}|\|_{L^2}^2
&\leq  -2\nu\|\nabla|\mathbf{B}|^2\|_{L^2}^2+C\|\nabla\mathbf{u}\|_{L^2}\||\mathbf{B}|^2\|_{L^4}^2 \notag \\
 & \leq
 -2\nu\|\nabla|\mathbf{B}|^2\|_{L^2}^2 +C\|\nabla\mathbf{u}\|_{L^2}\||\mathbf{B}|^2\|_{L^2}\|\nabla|\mathbf{B}|^2\|_{L^2}\notag\\
  &\leq
  C\|\mathbf{B}\|_{L^4}^4\|\nabla\mathbf{u}\|_{L^2}^2,
\end{align*}
which combined with Gronwall's inequality and \eqref{3.7} implies that
\begin{align}\label{3.35}
 \sup_{t\in[0,T]}\|\mathbf{B}\|_{L^4}^4+\int_0^T\||\mathbf{B}||\nabla\mathbf{B}|\|_{L^2}^2\mathrm{d}t
 \leq C(M)C_0.
\end{align}
Note that
\begin{equation*}
  \||\mathbf{B}||\Delta\mathbf{B}|\|_{L^2}^2\leq  C\|\nabla\mathbf{B}\|_{L^4}^4
  +C\sum_{a_1,a_2}\|\Delta|\widehat{B}(a_1,a_2)|^2\|_{L^2}^2.
\end{equation*}
Taking all possible choices of $a_1$ and $a_2$, and combining \eqref{3.34} with \eqref{3.35},
we obtain that
\begin{align}\label{3.36}
  &\sup_{t\in[0,T]}\big(\sigma^3\||\mathbf{B}||\nabla\mathbf{B}|\|_{L^2}^2\big)
  +2\int_0^T\sigma^3 \||\mathbf{B}||\Delta\mathbf{B}|\|_{L^2}^2\mathrm{d}t
  \notag\\
  &\leq
C\int_0^T\sigma^3\|\nabla\mathbf{u}\|_{L^4}^4\mathrm{d}t
+C\sup_{t\in[0,T]}\big(\sigma\|\nabla \mathbf{B}\|_{L^2}^2\big)\int_0^T\sigma\|\Delta\mathbf{B}\|_{L^2}^2\mathrm{d}t
+CC_0\int_0^T\sigma\|\Delta\mathbf{B}\|_{L^2}^2\mathrm{d}t\notag\\
&\leq
CA_1(T)+CA_1^2(T)
+C\int_0^T\sigma^3\|\nabla\mathbf{u}\|_{L^4}^4\mathrm{d}t,
\end{align}
which along with \eqref{3.32} yields that
\begin{align}\label{3.37}
&\sup_{t\in[0,T]}\sigma^3\big(\|\sqrt{\rho}\dot{\mathbf{u}}\|_{L^2}^2
   +\||\mathbf{B}||\nabla \mathbf{B}|\|_{L^2}^2\big)
   +\int_0^T\sigma^3\big(\|\nabla \dot{\mathbf{u}}\|_{L^2}^2
   +\||\mathbf{B}||\Delta \mathbf{B}|\|_{L^2}^2\big)\mathrm{d}t
   \notag\\
   &\leq CA_1(T)+CA_1^2(T)+C\int_0^T\sigma^3\|P\|_{L^4}^4\mathrm{d}t
   +C\int_0^T\sigma^3\|\nabla\mathbf{u}\|_{L^4}^4\mathrm{d}t.
\end{align}

It remains to control the last term on the right-hand side of \eqref{3.37}. Following the decomposition of the velocity field in \eqref{2.6}, we get that
\begin{align}\label{3.38}
 \|\nabla\mathbf{u}\|_{L^4}^4&\leq C\|\nabla\mathbf{w}_1\|_{L^4}^4+C\|\nabla\mathbf{w}_2\|_{L^4}^4
 +C\|\nabla\mathbf{w}_3\|_{L^4}^4\notag\\
 &\leq C\|\nabla\mathbf{w}_1\|_{L^2}\|\nabla\mathbf{w}_1\|_{L^6}^3+C\|P\|_{L^4}^4
 +C\||\mathbf{B}|^2\|_{L^4}^4\notag\\
 &\leq C(\|\nabla\mathbf{u}\|_{L^2}+\|\nabla\mathbf{w}_2\|_{L^2}+\|\nabla\mathbf{w}_3\|_{L^2})\|\nabla^2\mathbf{w}_1\|_{L^{\frac32}}^3+C\|P\|_{L^4}^4
 +C\|\mathbf{B}\|_{L^4}^4\||\mathbf{B}||\nabla \mathbf{B}|\|_{L^2}^2\notag\\
 &\leq C(\|\nabla\mathbf{u}\|_{L^2}+\|P\|_{L^2}+\|\mathbf{B}\|_{L^{4}}^2)\|\rho\dot{\mathbf{u}}\|_{L^{\frac32}}^3
 +C\|P\|_{L^4}^4+C\|\mathbf{B}\|_{L^4}^4\|\mathbf{B}\|_{L^2}^2\|\Delta\mathbf{B}\|_{L^2}^2\notag\\
 &\leq C(\|\nabla\mathbf{u}\|_{L^2}+\|P\|_{L^2}+\|\mathbf{B}\|_{L^{4}}^2)\|\sqrt{\rho}\|_{L^6}^3\|\sqrt{\rho}\dot{\mathbf{u}}\|_{L^2}^3
 +C\|P\|_{L^4}^4+CC_0\|\Delta\mathbf{B}\|_{L^2}^2\notag\\
 &\leq C(\hat{\rho})(\|\nabla\mathbf{u}\|_{L^2}+1)\|\sqrt{\rho}\dot{\mathbf{u}}\|_{L^2}^3
 +C\|P\|_{L^4}^4+CC_0\|\Delta\mathbf{B}\|_{L^2}^2
\end{align}
provided $C_0\leq1$. Hence, one deduces from \eqref{3.2} and \eqref{3.3} that
\begin{align}\label{3.39}
 \int_0^T\sigma^3\|\nabla\mathbf{u}\|_{L^4}^4\mathrm{d}t
 &\leq CC_0\int_0^T\sigma^3\|\Delta\mathbf{B}\|_{L^2}^2\mathrm{d}t
 +C(\hat{\rho})
 \sup_{t\in[0,T]}\big(\sigma^\frac32\|\sqrt{\rho}\dot{\mathbf{u}}\|_{L^2}\big)\int_0^T\sigma^\frac32\|\sqrt{\rho}\dot{\mathbf{u}}\|_{L^2}^2\mathrm{d}t
 \notag\\&\quad
+C(\hat{\rho})\sup_{t\in[0,T]}\big(\sigma^\frac12\|\nabla\mathbf{u}\|_{L^2}\big)
 \sup_{t\in[0,T]}\big(\sigma^\frac32\|\sqrt{\rho}\dot{\mathbf{u}}\|_{L^2}\big)\int_0^T\sigma\|\sqrt{\rho}\dot{\mathbf{u}}\|_{L^2}^2\mathrm{d}t
 +C\int_0^T\sigma^3\|P\|_{L^4}^4\mathrm{d}t
\notag\\
 &\leq CC_0A_1(T)+C(\hat{\rho})A_2^\frac12(T)\Big(A_1(T)+A_1^\frac32(T)\Big)
 +C\int_0^T\sigma^3\|P\|_{L^4}^4\mathrm{d}t,
\end{align}
which combined with \eqref{3.37} shows \eqref{3.11}.
\end{proof}

With the preliminary bounds, we now derive the estimates for $A_1(T)$ and $A_2(T)$.

\begin{lemma}\label{l3.3}
Under the assumption \eqref{3.5}, there exists a positive constant $\varepsilon_2$ depending only on $\alpha, \hat{\rho}, M, a$, $\gamma, \mu, \lambda, \nu$, and $K$ such that
\begin{align}\label{3.40}
A_1(T)+A_2(T)\leq  C_0^\frac{\alpha+2}{7\alpha+5}
\end{align}
provided $C_0\leq\varepsilon_2$.
\end{lemma}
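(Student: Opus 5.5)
From Lemma \ref{l3.2}, the plan is to feed the bootstrap assumption \eqref{3.5} into \eqref{3.10}--\eqref{3.11}. Everything then reduces to two remaining terms. The first is $\int_0^T\sigma\|\nabla\mathbf{u}\|_{L^3}^3\mathrm{d}t$. The second is $\int_0^T\sigma^3\|P\|_{L^4}^4\mathrm{d}t$, the time-integrated pressure term, which we must show is small, of order strictly higher than $C_0^{\frac{\alpha+2}{7\alpha+5}}$.

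\textbf{Step 1: the cubic term.} Split $(0,T)$ into $(0,\sigma(T))$ and $(\sigma(T),T)$. On the long-time part, interpolate $\|\nabla\mathbf{u}\|_{L^3}^3\le\|\nabla\mathbf{u}\|_{L^2}\|\nabla\mathbf{u}\|_{L^4}^2$. We then bound $\|\nabla\mathbf{u}\|_{L^4}^2$ by Cauchy--Schwarz against $\sigma^3\|\nabla\mathbf{u}\|_{L^4}^4$, which \eqref{3.39} controls. The factor $\int\|\nabla\mathbf{u}\|_{L^2}^2\le CC_0$ supplies smallness.

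On the initial layer, use \eqref{2.4} with $p=3$ and $\|\rho\|_{L^3}\le C(\hat\rho)$. This gives $\|\nabla\mathbf{u}\|_{L^3}\le C\|\sqrt\rho\dot{\mathbf{u}}\|_{L^2}+C\|P\|_{L^3}+C\|\mathbf{B}\|_{L^6}^2$. Combining this with $\|\nabla\mathbf{u}\|_{L^3}^3\le C\|\nabla\mathbf{u}\|_{L^2}^2\|\nabla\mathbf{u}\|_{L^6}$-type interpolation, we use $A_3(\sigma(T))\le3K$ to absorb the unweighted $\|\nabla\mathbf{u}\|_{L^2}$ powers. Each piece then takes the form $C(K,\hat\rho)\,C_0^{\delta}(A_1+A_2)^{\kappa}$ with $\delta>0$ or $\kappa>1$.

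With \eqref{3.5}, i.e.\ $A_1+A_2\le 2C_0^{\frac{\alpha+2}{7\alpha+5}}$, this yields $C_0^{\frac{\alpha+2}{7\alpha+5}+\delta'}$ for some $\delta'>0$. The $\mathbf{B}$-terms are handled by \eqref{ooz} and \eqref{3.35}.

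\textbf{Step 2: the pressure term (main obstacle).} Using $\|\rho\|_{L^\theta}\le2\hat\rho$ alone gives only $\|P\|_{L^4}^4\le CC_0^{\frac{4(2\alpha+1)-4(\alpha-1)}{7\alpha+5}}$ pointwise in time by \eqref{3.12}. This is not integrable over long times. So I would instead derive a transport equation for $P$ using $\divv\mathbf{u}=(F+P+\frac12|\mathbf{B}|^2)/(2\mu+\lambda)$:
\begin{equation*}
P_t+\mathbf{u}\cdot\nabla P+\frac{\gamma}{2\mu+\lambda}P^2=-\frac{\gamma}{2\mu+\lambda}P\Big(F+\frac12|\mathbf{B}|^2\Big).
\end{equation*}
Multiply by $3\sigma^3P^2$ and integrate. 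This gives
\begin{equation*}
\frac{\mathrm{d}}{\mathrm{d}t}\big(\sigma^3\|P\|_{L^3}^3\big)+c\,\sigma^3\|P\|_{L^4}^4\le C\sigma^2\sigma'\|P\|_{L^3}^3+C\sigma^3\big(\|F\|_{L^4}^4+\|\mathbf{B}\|_{L^8}^8\big).
\end{equation*}
Time-integrating, the $\sigma'$ term lives on $(0,1)$ and is bounded by \eqref{3.12}, of order $C_0^{\frac{4(2\alpha+1)-3(\alpha-1)}{7\alpha+5}}$. Since this exponent exceeds the target, that is fine.

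For $F$, use $\|F\|_{L^4}^4\le C\|F\|_{L^2}^2\|\nabla F\|_{L^2}^2$ and \eqref{2.3}. Here $\|F\|_{L^2}\le C(\|\nabla\mathbf{u}\|_{L^2}+\|P\|_{L^2}+\|\mathbf{B}\|_{L^4}^2)$, and $\|\rho\dot{\mathbf{u}}\|_{L^2}\le\|\rho\|_{L^\infty}^{1/2}\ldots$ is unavailable. So I would instead bound $\|\nabla F\|_{L^{3/2}}$ via $\|\sqrt\rho\|_{L^6}\|\sqrt\rho\dot{\mathbf{u}}\|_{L^2}$ and interpolate $F$ between $L^2$ and $L^6$ using Sobolev $\|F\|_{L^6}\le C\|\nabla F\|_{L^{3/2}}$. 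This produces $\sigma^3(\|\nabla\mathbf{u}\|_{L^2}+\|P\|_{L^2}+\dots)\|\sqrt\rho\dot{\mathbf{u}}\|_{L^2}^3$, as in \eqref{3.38}, and hence a bound of the form $C(\hat\rho)A_2^{1/2}(A_1+A_1^{3/2})+CC_0^{\delta}$.

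The $\mathbf{B}$ contribution is $\|\mathbf{B}\|_{L^8}^8\le C\|\mathbf{B}\|_{L^4}^4\||\mathbf{B}||\nabla\mathbf{B}|\|_{L^2}^2$, bounded by \eqref{3.35}. The delicate point is ensuring every product has total exponent strictly exceeding $\frac{\alpha+2}{7\alpha+5}$. This is where the exponent choice $\frac{\alpha+2}{7\alpha+5}$ and the $L^\theta$ integrability are used.

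\textbf{Step 3: closing.} Insert Steps 1--2 into \eqref{3.10}, then \eqref{3.10} into \eqref{3.11}, to get
\begin{equation*}
A_1+A_2\le C(\hat\rho,M,K)\Big(C_0^{\frac{2(\alpha+2)}{7\alpha+5}}+C_0^{\delta}(A_1+A_2)+(A_1+A_2)^{3/2}+(A_1+A_2)^2\Big).
\end{equation*}
Under \eqref{3.5} the right side is at most $C\,C_0^{\frac{\alpha+2}{7\alpha+5}+\delta''}$, which is $\le C_0^{\frac{\alpha+2}{7\alpha+5}}$ once $C_0\le\varepsilon_2$ is chosen small.

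One caution: the bound for $A_2$ contains the linear term $CA_1$, so $A_2$ cannot be closed before $A_1$. We close $A_1$ first with the better power $C_0^{\frac{2(\alpha+2)}{7\alpha+5}}$ plus superlinear terms, and then deduce $A_2$.
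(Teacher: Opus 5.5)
Your architecture matches the paper's. You feed \eqref{3.5} into Lemma~\ref{l3.2} and split the cubic term at $\sigma(T)$, handling the large-time part through $\|\nabla\mathbf{u}\|_{L^3}^3\le\|\nabla\mathbf{u}\|_{L^2}\|\nabla\mathbf{u}\|_{L^4}^2$ and \eqref{3.39}. You control the pressure by testing with $3\sigma^3P^2$ and using $\|F\|_{L^4}^4\le C\|F\|_{L^2}\|\nabla F\|_{L^{3/2}}^3$, exactly as in \eqref{ef}--\eqref{3.45}.

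The gap is in the initial layer $\int_0^{\sigma(T)}\sigma\|\nabla\mathbf{u}\|_{L^3}^3\,\mathrm{d}t$, where the exponent bookkeeping is tightest, and neither tool you name works there.

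Applying \eqref{2.4} with $p=3$ leaves $\int_0^{\sigma(T)}\sigma\|\sqrt{\rho}\dot{\mathbf{u}}\|_{L^2}^3\,\mathrm{d}t$. Factor $\sigma\|\sqrt{\rho}\dot{\mathbf{u}}\|_{L^2}^3$ into powers of three quantities:
$\sigma^3\|\sqrt{\rho}\dot{\mathbf{u}}\|_{L^2}^2$ (sup, from $A_2$);
$\sigma\|\sqrt{\rho}\dot{\mathbf{u}}\|_{L^2}^2$ (integrated from $A_1$, or sup via \eqref{3.47});
$\|\sqrt{\rho}\dot{\mathbf{u}}\|_{L^2}^2$ (integrated, bounded by $A_3(\sigma(T))\le 3K$).
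Matching the powers of $\sigma$ forces the total power of $A_1,A_2$ to be at most $\tfrac12$. The best you can get is $C(K)A_1^{1/2}\approx C_0^{\frac{\alpha+2}{2(7\alpha+5)}}$, which is larger than the target $C_0^{\frac{\alpha+2}{7\alpha+5}}$.

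The interpolation $\|\nabla\mathbf{u}\|_{L^3}^3\le C\|\nabla\mathbf{u}\|_{L^2}^2\|\nabla\mathbf{u}\|_{L^6}$ is false: it fails under the dilation $x\mapsto\lambda x$. The correct $L^2$--$L^6$ interpolation is $\|\nabla\mathbf{u}\|_{L^2}^{3/2}\|\nabla\mathbf{u}\|_{L^6}^{3/2}$. So your claim that every piece has the form $C_0^{\delta}(A_1+A_2)^{\kappa}$ with $\delta>0$ or $\kappa>1$ is not supported by the steps you give.

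The missing step is the paper's \eqref{3.43}. Use $\|\nabla\mathbf{u}\|_{L^3}^3\le\|\nabla\mathbf{u}\|_{L^2}^{3/2}\|\nabla\mathbf{u}\|_{L^6}^{3/2}$ together with \eqref{2.4} at $p=6$, and distribute the weight via Young as
\begin{equation*}
\sigma\|\nabla\mathbf{u}\|_{L^2}^{3/2}\|\sqrt{\rho}\dot{\mathbf{u}}\|_{L^2}^{3/2}\le \big(\sigma^{1/4}\|\nabla\mathbf{u}\|_{L^2}^{1/2}\big)\big(\sigma\|\sqrt{\rho}\dot{\mathbf{u}}\|_{L^2}^2+\|\nabla\mathbf{u}\|_{L^2}^4\big).
\end{equation*}
Combine this with \eqref{3.41}, $\int_0^{\sigma(T)}\|\nabla\mathbf{u}\|_{L^2}^4\,\mathrm{d}t\le C(K)C_0$, which is exactly where $A_3(\sigma(T))\le 3K$ enters. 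This yields $C(\hat\rho,K)A_1^{1/4}(A_1+C_0)$, and the extra factor $A_1^{1/4}$ supplies the superlinearity. The $P$- and $\mathbf{B}$-parts contribute $CC_0^{3/4}$ and $CA_1^{1/2}(A_1+C_0)$.

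In Step~2 you also dropped the $\||\mathbf{B}||\nabla\mathbf{B}|\|_{L^{3/2}}$ part of $\nabla F$ from \eqref{2.3}; it gives harmless $A_1^{3/2}$-type terms, as in \eqref{3.45}. With these two corrections, your Step~3 closes as in the paper.
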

\begin{proof}
If $C_0\leq 1$, it follows from \eqref{3.5} and \eqref{3.7} that
\begin{align}\label{3.41}
  \int_0^T\big(\|\nabla \mathbf{u}\|_{L^2}^4+\|\nabla \mathbf{B}\|_{L^2}^4\big)\mathrm{d}t
  &\leq\sup_{t\in[0,\sigma(T)]}\big(\|\nabla \mathbf{u}\|_{L^2}^2+\|\nabla \mathbf{B}\|_{L^2}^2\big)\int_0^{\sigma(T)}\big(\|\nabla \mathbf{u}\|_{L^2}^2+\|\nabla \mathbf{B}\|_{L^2}^2\big)\mathrm{d}t\notag\\
  &\quad
  +\sup_{t\in[\sigma(T),T]}\big(\sigma\|\nabla \mathbf{u}\|_{L^2}^2+\sigma\|\nabla \mathbf{B}\|_{L^2}^2\big)\int_{\sigma(T)}^T\big(\|\nabla \mathbf{u}\|_{L^2}^2+\|\nabla \mathbf{B}\|_{L^2}^2\big)\mathrm{d}t\notag\\
  &\leq CC_0A_3(\sigma(T))+CC_0A_1(T)\notag\\
  &\leq C(K)C_0.
\end{align}
By Lemma \ref{l3.2}, we have
\begin{align}\label{3.42}
A_1(T)+A_2(T)\leq CC_0^\frac{2(\alpha+2)}{7\alpha+5}+C\int_{0}^T\sigma^3\|P\|_{L^4}^4\mathrm{d}t
+C\int_0^T \sigma\|\nabla\mathbf{u}\|_{L^3}^3\mathrm{d}t
\end{align}
provided $C_0\leq\varepsilon_1$. Using \eqref{3.41} and \eqref{2.4}, we obtain that
\begin{align}\label{3.43}
 \int_0^{\sigma(T)}\sigma\|\nabla\mathbf{u}\|_{L^3}^3\mathrm{d}t
 &\leq
 \int_0^{\sigma(T)}\sigma\|\nabla\mathbf{u}\|_{L^2}^\frac32\|\nabla\mathbf{u}\|_{L^6}^\frac32\mathrm{d}t\notag\\
 &\leq C(\hat{\rho})
 \int_0^{\sigma(T)}\sigma\|\nabla\mathbf{u}\|_{L^2}^\frac32
 \big(\|\sqrt{\rho}\|_{L^6}\|\sqrt{\rho}\dot{\mathbf{u}}\|_{L^2}+\|P\|_{L^6}+\||\mathbf{B}|^2\|_{L^6}\big)^\frac32\mathrm{d}t\notag\\
 &\leq C(\hat{\rho})\sup_{t\in[0,\sigma(T)]}\big(\sigma^\frac14\|\nabla \mathbf{u}\|_{L^2}^\frac12\big)
 \int_0^{\sigma(T)}\big(\sigma\|\sqrt{\rho}\dot{\mathbf{u}}\|_{L^2}^2+\|\nabla \mathbf{u}\|_{L^2}^4\big)\mathrm{d}t\notag\\
 &\quad+C(\hat{\rho})\sup_{t\in[0,\sigma(T)]} \|P\|_{L^6}^\frac32\int_0^{\sigma(T)}(\|\nabla\mathbf{u}\|_{L^2}^2)^\frac34\mathrm{d}t
 \notag\\
 &\quad+C(\hat{\rho})\sup_{t\in[0,\sigma(T)]}\big(\sigma^\frac12\|\nabla \mathbf{u}\|_{L^2}\big)
 \int_0^{\sigma(T)}\big(\|\nabla\mathbf{u}\|_{L^2}^2+
 \sigma\|\mathbf{B}\|_{L^2}^2\|\mathbf{B}\|_{L^4}^2\|\Delta\mathbf{B}\|_{L^2}^2\big)\mathrm{d}t
 \notag\\
 &\leq C_1(\hat{\rho},K)A_1^\frac14(T)(A_1(T)+C_0)
 +C(\hat{\rho})A_1^\frac12(T)(A_1(T)+C_0)
 +C(\hat{\rho})C_0^\frac34.
\end{align}
Moreover, one deduces from \eqref{3.39} that
\begin{align*}
  \int_{\sigma(T)}^T\sigma\|\nabla\mathbf{u}\|_{L^3}^3\mathrm{d}t
  &\leq
 \int_{\sigma(T)}^T\sigma^3\|\nabla\mathbf{u}\|_{L^2}\|\nabla\mathbf{u}\|_{L^4}^2\mathrm{d}t\notag\\
  &\leq C\int_{\sigma(T)}^T\big(\|\nabla \mathbf{u}\|_{L^2}^2+\sigma^3\|\nabla \mathbf{u}\|_{L^4}^4\big)\mathrm{d}t\notag\\
 &\leq CC_0+C(\hat{\rho})A_2^\frac12(T)\Big(A_1(T)+A_1^\frac32(T)\Big)
 +C\int_0^T\sigma^3\|P\|_{L^4}^4\mathrm{d}t,
\end{align*}
which combined with \eqref{3.42} and \eqref{3.43} implies that
\begin{align}\label{3.44}
A_1(T)+A_2(T)\leq 2C_1(\hat{\rho},K)C_0^\frac{5(\alpha+2)}{4(7\alpha+5)}
 +C\int_{0}^T\sigma^3\|P\|_{L^4}^4\mathrm{d}t.
\end{align}

To estimate the last term on the right-hand side of \eqref{3.44}, we recall \eqref{3.8}:
\begin{equation*}
P_t+\divv(P\mathbf{u})+(\gamma-1)P\divv \mathbf{u}=0.
\end{equation*}
After multiplying it by $3\sigma^3P^2$ and integrating by parts, we infer from Young's inequality that
\begin{align}\label{ef}
\frac{3\gamma-1}{2\mu+\lambda}\sigma^3\|P\|_{L^4}^4&=
-\frac{\mathrm{d}}{\mathrm{d}t}\int \sigma^3P^3\mathrm{d}\mathbf{x}
+3\sigma^2\sigma'\|P\|_{L^3}^3
-\frac{3\gamma-1}{2(2\mu+\lambda)}\int \sigma^3P^3\big(2F+|\mathbf{B}|^2\big)\mathrm{d}\mathbf{x}\notag\\
&\leq -\frac{\mathrm{d}}{\mathrm{d}t}\int \sigma^3P^3\mathrm{d}\mathbf{x}
+3\sigma^2\sigma'\|P\|_{L^3}^3
+\frac{3\gamma-1}{4(2\mu+\lambda)}\sigma^3\|P\|_{L^4}^4
+C\sigma^3\big(\|F\|_{L^4}^4+\||\mathbf{B}|^2\|_{L^4}^4\big).
\end{align}
Integrating the above inequality over $(0,T)$ and using \eqref{2.3} yields that
\begin{align}\label{3.45}
  \int_{0}^{T}\sigma^3\|P\|_{L^{4}}^{4}\mathrm{d}t&\leq C\sup_{ t\in[0, T]}\|P\|_{L^3}^3
  +C\int_{0}^{\sigma(T)}\|P\|_{L^3}^3\mathrm{d}t+\int_{0}^{T}\sigma^3\|F\|_{L^{4}}^{4}\mathrm{d}t
  +\int_{0}^{T}\sigma^3\||\mathbf{B}|^2\|_{L^{4}}^{4}\mathrm{d}t\notag\\
&\leq CC_0^\frac{5\alpha+7}{7\alpha+5}
+C\int_{0}^{T}\sigma^3\|F\|_{L^{2}}\|\nabla F\|_{L^{\frac32}}^{3}
\mathrm{d}t+CC_0\int_{0}^{T}\sigma^3\|\Delta\mathbf{B}\|_{L^{2}}^2
\mathrm{d}t
\notag\\
&\leq CC_0^\frac{5\alpha+7}{7\alpha+5}
+C\int_{0}^{T}\sigma^3\big(\|\nabla\mathbf{u}\|_{L^{2}}
+\|P\|_{L^{2}}+\|\mathbf{B}\|_{L^{4}}^2\big)
\big(\|\sqrt{\rho}\|_{L^6}^3\|\sqrt{\rho}\dot{\mathbf{u}}\|_{L^2}^3
+\|\nabla\mathbf{B}\|_{L^{2}}^5\big)
\mathrm{d}t\notag\\
&\leq CC_0^\frac{5\alpha+7}{7\alpha+5}
+C(\hat{\rho})\sup_{t\in[0,T]}\big(\sigma^\frac12\|\nabla\mathbf{u}\|_{L^{2}}+1\big)
\bigg[\sup_{t\in[0,T]}\big(\sigma^\frac32\|\sqrt{\rho}\dot{\mathbf{u}}\|_{L^2}\big)
\int_{0}^{T}\sigma\|\sqrt{\rho}\dot{\mathbf{u}}\|_{L^2}^2\mathrm{d}t\notag\\
&\quad+\sup_{t\in[0,T]}\big(\sigma^\frac32\|\nabla\mathbf{B}\|_{L^{2}}^3\big)
\int_{0}^{T}\|\nabla\mathbf{B}\|_{L^{2}}^2\mathrm{d}t
\bigg]\notag\\
&\leq  CC_0^\frac{5\alpha+7}{7\alpha+5}+
C_3(\hat{\rho})\Big(A_1^\frac12(T)+1\Big)
\Big(A_1(T)A_2^\frac12(T)+A_1^\frac32(T)\Big).
\end{align}

As a consequence, one deduces from \eqref{3.44} and \eqref{3.45} that
\begin{align*}
  A_1(T)+A_2(T)&\leq 2C_1(\hat{\rho},K)C_0^\frac{5(\alpha+2)}{4(7\alpha+5)}
 +C_3(\hat{\rho})\Big(A_1^\frac12(T)+1\Big)
 \Big(A_1(T)A_2^\frac12(T)+A_1^\frac32(T)\Big)\notag\\
 &\leq 2C_1(\hat{\rho},K)C_0^\frac{\alpha+2}{4(7\alpha+5)}
 C_0^\frac{\alpha+2}{7\alpha+5}
 +4C_3(\hat{\rho})C_0^\frac{\alpha+2}{2(7\alpha+5)}
 C_0^\frac{\alpha+2}{7\alpha+5},
\end{align*}
which shows \eqref{3.40} if we take
\begin{equation*}
  C_0\leq\varepsilon_2\triangleq \min\left\{\varepsilon_1,
  \bigg(\frac{1}{4C_1(\hat{\rho},K)}\bigg)^\frac{4(7\alpha+5)}{\alpha+2},
  \bigg(\frac{1}{8C_3(\hat{\rho})}\bigg)^\frac{2(7\alpha+5)}{\alpha+2}\right\}.\tag*{\qedhere}
\end{equation*}
\end{proof}

Next, we bound the auxiliary functional $A_3(T)$ and $\sigma\|\sqrt{\rho}\dot{\mathbf{u}}\|_{L^2}^2$
for small time.

\begin{lemma}\label{l3.4}
Under the assumption \eqref{3.5}, there exist positive constants $K\geq 2M^2$, $C(K)$, and $\varepsilon_3$ which depend only on $\alpha, \hat{\rho}, M, a, \gamma, \mu, \lambda$, and $\nu$ such that
\begin{gather}\label{3.46}
  A_3(\sigma(T))=\sup_{t\in[0,\sigma(T)]}\big(\|\nabla \mathbf{u}\|_{L^2}^2+\|\nabla \mathbf{B}\|_{L^2}^2\big)
  +\int_0^{\sigma(T)}\big(\|\sqrt{\rho}\dot{\mathbf{u}}\|_{L^2}^2
  +\|\Delta\mathbf{B}\|_{L^2}^2\big)\mathrm{d}t
  \leq 2K, \\
  \sup_{t\in[0,\sigma(T)]}\big(\sigma\|\sqrt{\rho} \dot{\mathbf{u}}\|_{L^2}^2\big)
   +\int_0^{\sigma(T)}\sigma\|\nabla \dot{\mathbf{u}}\|_{L^2}^2\mathrm{d}t\leq C(K)\label{3.47}
\end{gather}
provided $C_0\leq\varepsilon_3$.
\end{lemma}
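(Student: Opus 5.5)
The estimate \eqref{3.46} comes from redoing the computation behind \eqref{3.20} without the time weight, and \eqref{3.47} from redoing \eqref{3.31} with weight $\sigma$ instead of $\sigma^3$; both are restricted to $[0,\sigma(T)]$.

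\textbf{Step 1: the unweighted energy inequality.} Multiply \eqref{3.13} by $\dot{\mathbf{u}}$ and \eqref{a1}$_3$ by $\Delta\mathbf{B}$, and treat $\mathcal{I}_1,\dots,\mathcal{I}_4$ exactly as in \eqref{3.15}--\eqref{3.19} with $\sigma\equiv1$. Since no $\sigma'$ terms appear, this gives
\begin{equation*}
\frac{\mathrm{d}}{\mathrm{d}t}\big(\mathcal{E}_1(t)+\widetilde{C}\|\nabla\mathbf{B}\|_{L^2}^2\big)+\frac12\big(\|\sqrt\rho\dot{\mathbf{u}}\|_{L^2}^2+\nu\|\Delta\mathbf{B}\|_{L^2}^2\big)\le C\big(\|\nabla\mathbf{u}\|_{L^2}^2+\|\nabla\mathbf{B}\|_{L^2}^2\big)+C\|\nabla\mathbf{u}\|_{L^3}^3+C\|\nabla\mathbf{B}\|_{L^2}^4+C\int P|\nabla\mathbf{u}|^2\mathrm{d}\mathbf{x}.
\end{equation*}
We integrate over $(0,t)$ with $t\le\sigma(T)\le1$. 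At $t=0$ we have $\mathcal{E}_1(0)\le CM^2+CC_0^{\cdot}$ by \eqref{1.10} and \eqref{3.12}. For the lower bound we use \eqref{3.21}.

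\textbf{Step 2: the nonlinear terms.} By \eqref{2.4} with $p=6$ and \eqref{3.12},
\begin{equation*}
\|\nabla\mathbf{u}\|_{L^6}\le C(\hat\rho)\big(\|\sqrt\rho\dot{\mathbf{u}}\|_{L^2}+C_0^{\cdot}+\|\mathbf{B}\|_{L^2}^{2/3}\|\nabla\mathbf{B}\|_{L^2}^{4/3}\big).
\end{equation*}
Interpolating,
\begin{equation*}
\|\nabla\mathbf{u}\|_{L^3}^3\le\|\nabla\mathbf{u}\|_{L^2}^{3/2}\|\nabla\mathbf{u}\|_{L^6}^{3/2}\le\tfrac18\|\sqrt\rho\dot{\mathbf{u}}\|_{L^2}^2+C\|\nabla\mathbf{u}\|_{L^2}^6+\cdots.
\end{equation*}
The pressure term is bounded by $\|P\|_{L^3}\|\nabla\mathbf{u}\|_{L^2}\|\nabla\mathbf{u}\|_{L^6}$ and is handled the same way, with the small factor $C_0^{\cdot}$ in front. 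The terms of order $\|\nabla\mathbf{B}\|_{L^2}^4,\|\nabla\mathbf{B}\|_{L^2}^6$ are written as
\begin{equation*}
\big(\|\nabla\mathbf{u}\|_{L^2}^2+\|\nabla\mathbf{B}\|_{L^2}^2\big)^2\cdot\big(\|\nabla\mathbf{u}\|_{L^2}^2+\|\nabla\mathbf{B}\|_{L^2}^2\big).
\end{equation*}
Under \eqref{3.5} the first factor is at most $9K^2$, and the second is integrable in time with integral at most $CC_0$ by \eqref{3.7}.

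\textbf{Step 3: closing \eqref{3.46}.} Altogether we get
\begin{equation*}
A_3(\sigma(T))\le C_*M^2+C(\hat\rho)+C(K)C_0^{\beta}
\end{equation*}
for some $\beta>0$. Choosing $K\ge\max\{2M^2,\,C_*M^2+C(\hat\rho)\}$, a choice independent of $\varepsilon$, and then $C_0$ small so that $C(K)C_0^\beta\le K$, yields \eqref{3.46}.

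The main obstacle is that the cubic and higher powers must be absorbed using only the a priori bound $3K$ together with a small time-integral of order $C_0$. I expect every bad term to carry a factor $\int_0^{\sigma(T)}\|\nabla\mathbf{u}\|_{L^2}^2\,\mathrm{d}t\le C_0$, as in \eqref{3.41}. The one exception is the term $C\|P\|_{L^6}^{3/2}\int_0^{\sigma(T)}\|\nabla\mathbf{u}\|_{L^2}^{3/2}\,\mathrm{d}t$; for it we use Hölder on $[0,1]$ to get the bound $C_0^{3/4}$, as in \eqref{3.43}.

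\textbf{Step 4: proving \eqref{3.47}.} Repeat \eqref{3.25}--\eqref{3.36} with $\sigma^3$ replaced by $\sigma$. The term $\sigma'\|\sqrt\rho\dot{\mathbf{u}}\|_{L^2}^2$ is now controlled by $A_3(\sigma(T))\le 2K$. The remaining terms are $\int_0^{\sigma(T)}\sigma\big(\|\nabla\mathbf{u}\|_{L^4}^4+\|P\|_{L^4}^4+\||\mathbf{B}|^2\|_{L^4}^4+\||\mathbf{B}||\Delta\mathbf{B}|\|_{L^2}^2\big)\,\mathrm{d}t$. For $\|\nabla\mathbf{u}\|_{L^4}^4$ we use \eqref{3.38}:
\begin{equation*}
\int_0^{\sigma(T)}\sigma\|\nabla\mathbf{u}\|_{L^4}^4\,\mathrm{d}t\le C(K)\sup\big(\sigma^{1/2}\|\sqrt\rho\dot{\mathbf{u}}\|_{L^2}\big)\int_0^{\sigma(T)}\|\sqrt\rho\dot{\mathbf{u}}\|_{L^2}^2\,\mathrm{d}t+C(K)\le\tfrac12\sup\big(\sigma\|\sqrt\rho\dot{\mathbf{u}}\|_{L^2}^2\big)+C(K).
\end{equation*}
The magnetic terms are bounded by $C(K)$ via \eqref{3.35} and \eqref{3.46}. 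The pressure term is bounded via \eqref{3.12}, since $P\in L^4$ uniformly. Absorbing the $\tfrac12\sup$ term gives \eqref{3.47}.
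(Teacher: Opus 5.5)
Your proposal is correct and follows essentially the same route as the paper. You run the unweighted analogue of \eqref{3.20} on $[0,\sigma(T)]$ and close it with the bootstrap bound $A_3(\sigma(T))\le 3K$ and smallness of $C_0$, fixing $K$ first from $M,\hat{\rho}$ and then $\varepsilon_3$; you then obtain \eqref{3.47} from the $\sigma$-weighted version of \eqref{3.25}--\eqref{3.36} together with \eqref{3.38} and Young's inequality. The only difference is cosmetic: you absorb $\|\sqrt{\rho}\dot{\mathbf{u}}\|_{L^2}^2$ by Young and pay with $\|\nabla\mathbf{u}\|_{L^2}^6\le 9K^2\|\nabla\mathbf{u}\|_{L^2}^2$, whereas the paper extracts a small power of $C_0$ from the density and pressure norms and bounds the cubic term by $C_0^{\beta}A_3^{3/2}(\sigma(T))$.
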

\begin{proof}
Multiplying \eqref{3.13} by $\dot{\mathbf{u}}$ and integrating the resultant over $\mathbb{R}^2$, one has that
\begin{align*}
\int\rho |\dot{\mathbf{u}}|^2\mathrm{d}\mathbf{x}&=
\int[-\dot{\mathbf{u}}\cdot\nabla P+\mu\Delta\mathbf{u}\cdot\dot{\mathbf{u}}
+(\mu+\lambda)\dot{\mathbf{u}}\cdot\nabla\divv \mathbf{u}]\mathrm{d}\mathbf{x}
-\frac12\int\dot{\mathbf{u}}\cdot\nabla|\mathbf{B}|^2
\mathrm{d}\mathbf{x}
+\int\mathbf{B}\cdot\nabla\mathbf{B}\cdot\dot{\mathbf{u}}\mathrm{d}\mathbf{x}.
\end{align*}
Following the same arguments as those in \eqref{3.15}--\eqref{3.20}, with $\sigma$ replaced by $1$, we obtain from \eqref{1.10} and \eqref{3.7} that
\begin{align}\label{3.48}
  &\sup_{t\in[0,\sigma(T)]}\big(\|\nabla \mathbf{u}\|_{L^2}^2+\|\nabla \mathbf{B}\|_{L^2}^2\big)
  +\int_0^{\sigma(T)}\big(\|\sqrt{\rho}\dot{\mathbf{u}}\|_{L^2}^2
  +\|\Delta\mathbf{B}\|_{L^2}^2\big)\mathrm{d}t\notag\\
  &\leq CM^2
+C\int_0^{\sigma(T)}\|\nabla\mathbf{u}\|^3_{L^3}\mathrm{d}t
+C\int_0^{\sigma(T)}\int P|\nabla\mathbf{u}|^2\mathrm{d}\mathbf{x}\mathrm{d}t
\end{align}
provided $C_0\leq 1$.

It remains to estimate the last two terms on the right-hand side of \eqref{3.48}. Using \eqref{2.4} and \eqref{3.12}, we find that
\begin{align}\label{3.49}
 \int_0^{\sigma(T)}\|\nabla\mathbf{u}\|_{L^3}^3\mathrm{d}t
 &\leq
 \int_0^{\sigma(T)}\|\nabla\mathbf{u}\|_{L^2}^\frac32\|\nabla\mathbf{u}\|_{L^6}^\frac32\mathrm{d}t\notag\\
 &\leq C
 \int_0^{\sigma(T)}\|\nabla\mathbf{u}\|_{L^2}^\frac32
 \big(\|\sqrt{\rho}\|_{L^6}\|\sqrt{\rho}\dot{\mathbf{u}}\|_{L^2}+\|P\|_{L^6}+\||\mathbf{B}|^2\|_{L^6}\big)^\frac32\mathrm{d}t\notag\\
 &\leq C(\hat{\rho})\sup_{t\in[0,\sigma(T)]}\big(\|\nabla \mathbf{u}\|_{L^2}^2\big)^\frac34
 \sup_{t\in[0,\sigma(T)]} \|\rho\|_{L^3}^\frac34
 \int_0^{\sigma(T)}\big(\|\sqrt{\rho}\dot{\mathbf{u}}\|_{L^2}^2\big)^\frac34\mathrm{d}t\notag\\
 &\quad+C(\hat{\rho})\sup_{t\in[0,\sigma(T)]} \|P\|_{L^6}^\frac32\int_0^{\sigma(T)}(\|\nabla\mathbf{u}\|_{L^2}^2)^\frac34\mathrm{d}t\notag\\
 &\quad+C(\hat{\rho})C_0^\frac34\sup_{t\in[0,\sigma(T)]}\big(\|\nabla \mathbf{u}\|_{L^2}^2\big)^\frac34
 \sup_{t\in[0,\sigma(T)]}\big(\|\nabla \mathbf{B}\|_{L^2}^2\big)^\frac14
 \int_0^{\sigma(T)}\big(\|\Delta\mathbf{B}\|_{L^2}^2\big)^\frac12\mathrm{d}t\notag\\
 &\leq C_4(\hat{\rho})C_0^\frac{5\alpha+7}{4(7\alpha+5)}
 A_3^\frac32(\sigma(T))+C(\hat{\rho})C_0^\frac34, 
 \\
 \int_0^{\sigma(T)}\int P|\nabla\mathbf{u}|^2\mathrm{d}\mathbf{x}\mathrm{d}t
  &\leq  \int_0^{\sigma(T)}\|P\|_{L^3}\|\nabla\mathbf{u}\|_{L^2}\|\nabla\mathbf{u}\|_{L^6}\mathrm{d}t\notag\\
  &\leq C(\hat{\rho})C_0^\frac{5\alpha+7}{3(7\alpha+5)}
  \int_0^{\sigma(T)} \|\nabla\mathbf{u}\|_{L^2}\big(\|\sqrt{\rho}\|_{L^6}\|\sqrt{\rho}\dot{\mathbf{u}}\|_{L^2}+\|P\|_{L^6}+\||\mathbf{B}|^2\|_{L^6}\big)\mathrm{d}t\notag\\
 &\leq C(\hat{\rho})C_0^\frac{5\alpha+7}{2(7\alpha+5)}
 A_3(\sigma(T))+C(\hat{\rho})C_0^\frac12.\label{3.50}
\end{align}
Thus, it follows from \eqref{3.48}--\eqref{3.50} that
\begin{align*}
   A_3(\sigma(T))
 &\leq C(\hat{\rho},M)+C_4(\hat{\rho})C_0^\frac{5\alpha+7}{4(7\alpha+5)}
 A_3^\frac32(\sigma(T))+C(\hat{\rho})C_0^\frac{5\alpha+7}{2(7\alpha+5)}
 A_3(\sigma(T))\notag\\
&\leq K+2C_4(\hat{\rho})C_0^\frac{5\alpha+7}{4(7\alpha+5)}
 A_3^\frac32(\sigma(T))
\end{align*}
for $K\triangleq C(\hat{\rho},M)$, which immediately yields \eqref{3.46} provided
\begin{equation*}
  C_0\leq\varepsilon_{3}\triangleq \min\left\{\varepsilon_2,
  \left(\frac{1}{18KC_4(\hat{\rho})}\right)^\frac{4(7\alpha+5)}{5\alpha+7}\right\}.
\end{equation*}

We now proceed to prove \eqref{3.47}. Operating $\sigma\dot{u}^j[\partial/\partial t+\divv({\mathbf{u}}\cdot)]$ on $\eqref{3.13}^j$, summing all the equalities with respect to $j$, and integrating the resultant over $\mathbb{R}^2\times(0,\sigma(T))$, we deduce from \eqref{3.34} and \eqref{3.25}--\eqref{3.31} (with $\sigma^3$ replaced by $\sigma$) that
\begin{align}\label{3.51}
&\sup_{t\in[0,\sigma(T)]}\big(\sigma\|\sqrt{\rho}\dot{\mathbf{u}}\|_{L^2}^2\big)
 +\int_0^{\sigma(T)}\sigma\|\nabla\dot{\mathbf{u}}\|_{L^2}^2\mathrm{d}t\notag\\
 &\leq C\int_0^{\sigma(T)}\|\sqrt{\rho}\dot{\mathbf{u}}\|_{L^2}^2\mathrm{d}t
 +CC_0\int_0^{\sigma(T)}\sigma\|\mathbf{B}\|_{L^4}^4\|\Delta\mathbf{B}\|_{L^2}^2\mathrm{d}t
 +C\int_0^{\sigma(T)}\sigma\big(\|\nabla\mathbf{u}\|_{L^4}^4+\|P\|_{L^4}^4\big)\mathrm{d}t
 +CK\notag\\
 &\leq C(K)
 +C(\hat{\rho})\sup_{t\in[0,\sigma(T)]}\big(\sigma^\frac12\|\nabla\mathbf{u}\|_{L^{2}}+\sigma^\frac12\|\nabla\mathbf{B}\|_{L^{2}}
 +\|P\|_{L^2}+1\big)
\notag\\
&\quad\times\sup_{t\in[0,\sigma(T)]}\big(\sigma^\frac12\|\sqrt{\rho}\dot{\mathbf{u}}\|_{L^2}\big)
\int_{0}^{\sigma(T)}\big(\|\sqrt{\rho}\dot{\mathbf{u}}\|_{L^2}^2
+\|\nabla\mathbf{B}\|_{L^{2}}^4\big)\mathrm{d}t\notag\\
 &\leq C(K)+C(\hat{\rho})\big(A_3(\sigma(T))+C_0\big)
\sup_{t\in[0,\sigma(T)]}\big(\sigma\|\sqrt{\rho}\dot{\mathbf{u}}\|_{L^2}^2\big)^\frac12,
\end{align}
owing to \eqref{3.38} and  \eqref{3.41},
which along with \eqref{3.46} indicates \eqref{3.47}.
\end{proof}

To bound $\|\rho\|_{L^\theta}$, we will employ the following spatial weighted estimate.

\begin{lemma}\label{l3.5}
Let $\bar{x}$ and $\alpha\in(1,2)$ be as in \eqref{1.7}.
There exists a positive constant $C$ depending only on $\alpha, \hat{\rho}, M, a,
\gamma, \mu, \lambda$, and $\eta_0$ such that, for any $0<t<T$,
\begin{equation}\label{3.52}
  \sup_{s\in[0,t]}\|\bar{x}^\alpha\rho\|_{L^1}\leq C(1+t)^4.
\end{equation}
\end{lemma}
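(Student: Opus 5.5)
We differentiate the weighted mass and use the mass equation $\eqref{a1}_1$. Multiplying it by $\bar{x}^\alpha$ and integrating by parts (justified with a cut-off function as in \eqref{3.1}) gives
\begin{equation*}
\frac{\mathrm{d}}{\mathrm{d}t}\int\rho\bar{x}^\alpha\mathrm{d}\mathbf{x}
=\int\rho\,\mathbf{u}\cdot\nabla\bar{x}^\alpha\mathrm{d}\mathbf{x}
\leq C\int\rho|\mathbf{u}|\bar{x}^{\alpha-1}\log^{2}(e+|\mathbf{x}|^2)\mathrm{d}\mathbf{x}.
\end{equation*}
Here we used $|\nabla\bar{x}|\le C\log^2(e+|\mathbf{x}|^2)$. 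We split $\rho\bar{x}^{\alpha-1}\log^2(e+|\mathbf{x}|^2)=(\rho\bar{x}^\alpha)^{1-1/\alpha}\cdot\rho^{1/\alpha}\log^2(e+|\mathbf{x}|^2)$ and apply H\"older's inequality. This gives
\begin{equation*}
\frac{\mathrm{d}}{\mathrm{d}t}\|\bar{x}^\alpha\rho\|_{L^1}\leq C\|\bar{x}^\alpha\rho\|_{L^1}^{1-\frac1\alpha}\Big(\int\rho\,|\mathbf{u}|^\alpha\log^{2\alpha}(e+|\mathbf{x}|^2)\mathrm{d}\mathbf{x}\Big)^{\frac1\alpha}.
\end{equation*}
It is simpler to bound the last integral by $C\|\rho^{1/2}\mathbf{u}\|_{L^2}$ times a weighted quantity. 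We write $\rho|\mathbf{u}|^\alpha\log^{2\alpha}=\rho^{\alpha/2}|\mathbf{u}|^\alpha\cdot\rho^{1-\alpha/2}\log^{2\alpha}$. Since $\alpha<2$, the second factor is controlled by $\rho^{1-\alpha/2}$ times a logarithm. The logarithm is dominated by a small power of $\bar{x}$, so it is absorbed into $\|\bar{x}^\alpha\rho\|_{L^1}$ through H\"older interpolation with $\|\rho\|_{L^1}=1$ and $\|\rho\|_{L^\theta}\le 2\hat\rho$. This reduces everything to a differential inequality of the form
\begin{equation*}
\frac{\mathrm{d}}{\mathrm{d}t}\|\bar{x}^\alpha\rho\|_{L^1}\le C\big(1+\|\bar{x}^\alpha\rho\|_{L^1}\big)\Phi(t),\qquad \text{or sublinear variants},
\end{equation*}
where $\Phi$ involves $\|\sqrt\rho\mathbf{u}\|_{L^2}\le C$ and $\|\nabla\mathbf{u}\|_{L^2}$.

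To avoid Gronwall with $\|\nabla\mathbf{u}\|_{L^2}$, which would only give exponential growth, we instead aim for a polynomial bound. The cleanest way is to control $\mathbf{u}$ on a growing ball and use the localization Lemma~\ref{l2.4}\eqref{2.1} together with Lemma~\ref{l2.3}.

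First we prove a lower bound of mass in a ball. For a cut-off $\varphi_R$ with $|\nabla\varphi_R|\le CR^{-1}$, we have $\big|\frac{\mathrm{d}}{\mathrm{d}t}\int\rho\varphi_R\big|\le CR^{-1}\|\sqrt\rho\|_{L^2}\|\sqrt\rho\mathbf{u}\|_{L^2}\le CR^{-1}$. Hence $\int_{B_{R(t)}}\rho\ge\frac14$ for $R(t)=2\eta_0+C(1+t)$, which is a version of \eqref{1.13} with linear radius. Then Lemma~\ref{l2.3}, combined with \eqref{2.1} on $B_{\eta_*}$ with $\eta_*=R(t)$, yields for $m=2\alpha/(2-\alpha)$-type exponents
\begin{equation*}
\Big\|\frac{\mathbf{u}}{\bar{x}^{\delta}}\Big\|_{L^m}\le C\eta_*^2\big(\|\sqrt\rho\mathbf{u}\|_{L^2}+\|\nabla\mathbf{u}\|_{L^2}\big)\le C(1+t)^2(1+\|\nabla\mathbf{u}\|_{L^2})
\end{equation*}
for any small $\delta>\frac1m$ (up to logarithms).

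Inserting this into $\int\rho|\mathbf{u}|\bar{x}^{\alpha-1}\log^2$, we write the integrand as $\frac{|\mathbf{u}|}{\bar{x}^\delta}\cdot\rho\bar{x}^{\alpha-1+\delta}\log^2$. By H\"older, $\|\rho\bar{x}^{\alpha-1+\delta}\log^2\|_{L^{m'}}\le C\|\bar{x}^\alpha\rho\|_{L^1}^{\kappa}\|\rho\|_{L^\theta}^{1-\kappa}$ with some $\kappa<1$. The hypothesis $\theta\ge\zeta$ is exactly what makes this exponent bookkeeping close; this is the same computation underlying \eqref{2.2}. Setting $Y(t)=\|\bar{x}^\alpha\rho\|_{L^1}$, this gives
\begin{equation*}
Y'\le C(1+t)^2(1+\|\nabla\mathbf{u}\|_{L^2})(1+Y)^{\kappa}.
\end{equation*}
Integrating, and using $\int_0^t\|\nabla\mathbf{u}\|_{L^2}\le Ct^{1/2}C_0^{1/2}$ from \eqref{3.7}, we get $Y^{1-\kappa}\le C(1+t)^{3}$. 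Arranging $\kappa\le 1/4$ through the choice of $m$ and $\delta$ then gives $Y\le C(1+t)^4$ together with $Y(0)\le M$.

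\textbf{Main obstacle.} The hard step is the exponent bookkeeping: we must choose $m$, $\delta$ and the H\"older split so that the power $\kappa$ of $Y$ is small enough to give exactly $(1+t)^4$. At the same time, the $\eta_*^2$ factor coming from \eqref{2.1} must combine with the linear growth of $R(t)$ into the right power. If $\kappa$ cannot be made small, I would fall back on a sublinear ODE comparison with the explicit growth $(1+t)^2$ and accept whatever polynomial power results.
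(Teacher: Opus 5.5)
Your preliminary steps are sound and close to the paper's. The weighted-mass identity is fine. So is the lower bound $\int_{B_R}\rho\,\mathrm{d}\mathbf{x}\ge\frac14$ with $R\sim 2\eta_0+C(1+t)$: fixing the cut-off radius separately for each $t$ is legitimate, and it even avoids the factor $\log^\alpha(e+t)$ produced by the paper's rescaled cut-off $\psi_1(\mathbf{y})$. The bound on $\bar{x}^{-\delta}\mathbf{u}$ via Lemma~\ref{l2.3} and \eqref{2.1} is \eqref{3.57}; note that the admissible range there is $\delta m>2$, not $\delta>1/m$, since $\bar{x}\sim|\mathbf{x}|$.

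The gap is the closing step. In your H\"older split the only spatially weighted information on $\rho$ is $Y=\|\bar{x}^\alpha\rho\|_{L^1}$. The weight $\bar{x}^{\alpha-1+\delta}$ must therefore be paid entirely by $Y^\kappa$. Testing with a fixed density profile translated to distance $R\to\infty$ shows this forces $\alpha\kappa\ge\alpha-1+\delta$, hence $\kappa>1-\frac1\alpha$. Integrating $Y'\le C(1+t)^2(1+\|\nabla\mathbf{u}\|_{L^2})(1+Y)^\kappa$ then yields only $Y\le C(1+t)^{3/(1-\kappa)}$ with $\frac{3}{1-\kappa}>3\alpha$. This exceeds $4$ for every $\alpha\in[\frac43,2)$. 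So ``arranging $\kappa\le\frac14$'' is impossible on that range, and accepting a larger power does not prove \eqref{3.52} as stated.

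The missing idea is to keep $Y$ off the right-hand side altogether by first controlling a lower moment.

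\textbf{Step 1.} Multiply $\eqref{a1}_1$ by $(1+|\mathbf{x}|^2)^{1/2}$, whose gradient has modulus at most $1$. This gives $\frac{\mathrm{d}}{\mathrm{d}t}\int\rho(1+|\mathbf{x}|^2)^{1/2}\mathrm{d}\mathbf{x}\le\|\sqrt{\rho}\|_{L^2}\|\sqrt{\rho}\mathbf{u}\|_{L^2}\le C$, so this moment grows at most linearly; this is \eqref{3.53}.

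\textbf{Step 2.} Factor $\rho|\mathbf{u}|\bar{x}^{\alpha-1}\log^2(e+|\mathbf{x}|^2)$ as $\big(\rho(1+|\mathbf{x}|^2)^{1/2}\big)^{\frac{\alpha+1}{3}}\cdot\bar{x}^{-\frac{2-\alpha}{2}}|\mathbf{u}|\cdot\rho^{\frac{2-\alpha}{3}}$, times a leftover weight comparable to $(1+|\mathbf{x}|^2)^{\frac{\alpha-2}{12}}\log^{\alpha+2}(e+|\mathbf{x}|^2)$. That weight is bounded precisely because $\alpha<2$.

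\textbf{Step 3.} Apply H\"older's inequality with exponents $\frac{3}{\alpha+1},\frac{6}{2-\alpha},\frac{6}{2-\alpha}$, and use your bound for the middle factor (here $\delta m=3$). This gives $Y'\le C(1+t)^{2+\frac{\alpha+1}{3}}(1+\|\nabla\mathbf{u}\|_{L^2})$, with no power of $Y$ on the right.

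\textbf{Step 4.} Since $\frac{\alpha+1}{3}<1$, integrate in time. Either $\sup_t\|\nabla\mathbf{u}\|_{L^2}\le C$ from \eqref{3.5}, as in the paper, or your $L^1_t$ bound from \eqref{3.7} works. The result is $Y\le M+C(1+t)^4$.
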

\begin{proof}
Multiplying $\eqref{a1}_1$ by $(1+|\mathbf{x}|^2)^\frac12$ and integrating the resulting equality over $\mathbb{R}^2$, we obtain that
\begin{equation*}
  \frac{\mathrm{d}}{\mathrm{d}t}\int\rho(1+|\mathbf{x}|^2)^\frac12\mathrm{d}\mathbf{x}
\leq C\int\rho|\mathbf{u}|\mathrm{d}\mathbf{x}
\leq C\|\sqrt{\rho}\|_{L^2}\|\sqrt{\rho}\mathbf{u}\|_{L^2}.
\end{equation*}
Integrating the above inequality with respect to time over $(0,t)$ together with \eqref{3.1} and \eqref{3.7} yields that
\begin{equation}\label{3.53}
  \sup_{s\in[0,t]}\int\rho(1+|\mathbf{x}|^2)^\frac12\mathrm{d}\mathbf{x}\leq C(M)(1+t).
\end{equation}

For $N\geq1$, let $\psi_N\in C^\infty_0(B_{2N})$ satisfy
\begin{equation}\label{3.54}
  \psi_N(\mathbf{x})=
  \begin{cases}
  1, \ \ |\mathbf{x}|\leq N, \\
  0, \ \ |\mathbf{x}|\geq 2N,
  \end{cases}
  \ \ \ 0\leq\psi_N\leq1, \ \ \ |\nabla\psi_N|\leq CN^{-1}.
\end{equation}
Set
\begin{equation*}
\mathbf{y}(t)=\delta\mathbf{x}(1+t)^{-1}\log^{-\alpha}(e+t),
\end{equation*}
with some small constant $\delta>0$ determined later. Multiplying $\eqref{a1}_1$ by $\psi_1(\mathbf{y})$ and using integration by parts, one gets from \eqref{3.53} that
\begin{align}\label{3.55}
 \frac{\mathrm{d}}{\mathrm{d}t}\int\rho\psi_{1}(\mathbf{y})\mathrm{d}\mathbf{x}
 &=\int\rho\mathbf{y}_t\cdot\nabla_{\mathbf{y}}\psi_1\mathrm{d}\mathbf{x}
 +\delta(1+t)^{-1}\log^{-\alpha}(e+t)\int\rho\mathbf{u}\cdot\nabla_{\mathbf{y}}\psi_1\mathrm{d}\mathbf{x}\notag\\
 &\geq -\frac{C\delta}{(1+t)^{2}\log^{\alpha}(e+t)}\int\rho|\mathbf{x}|\mathrm{d}\mathbf{x}
 -\frac{C\delta}{(1+t)\log^{\alpha}(e+t)}\notag\\
&\geq -\frac{2C(M)\delta}{(1+t)\log^{\alpha}(e+t)}.
\end{align}
It thus follows from \eqref{1.9} and \eqref{3.54} that
\begin{equation*}
  \int\rho\psi_{1}(\mathbf{y})\mathrm{d}\mathbf{x}
  \geq \int\rho_0\psi_{1}(\delta\mathbf{x})\mathrm{d}\mathbf{x}-C(\alpha,M)\delta
  \geq \int_{B_{\eta_0}}\rho_0\mathrm{d}\mathbf{x}-C(\alpha,M)\delta
  \geq \frac{1}{4}
\end{equation*}
by selecting $\delta\triangleq\big(\eta_0+4C(\alpha,M)\big)^{-1}$. Furthermore,
for $\eta_1\triangleq 2\delta^{-1}=2\eta_0+8C(\alpha,M)$, we deduce that
\begin{equation}\label{3.56}
\inf_{0\leq t\leq T}\int_{B_{\eta_1(1+t)\log^{\alpha}(e+t)}}\rho\mathrm{d}\mathbf{x}\geq
\inf_{0\leq t\leq T}\int\rho\psi_{1}(\mathbf{y})\mathrm{d}\mathbf{x}\geq \frac{1}{4},
\end{equation}
as the desired \eqref{1.13}.

Now we use Lemma \ref{l2.4} with the radius $\eta_*=\eta_1(1+t)\log^{\alpha}(e+t)$.
It follows from  Lemma \ref{l2.3}, \eqref{2.1}, \eqref{3.1},  and \eqref{3.53} that
\begin{equation}\label{3.57}
  \sup_{s\in[0,t]}\|\bar{x}^{-\varsigma}\mathbf{u}\|_{L^{r/\varsigma}}\leq
  C\sup_{s\in[0,t]}\eta_*^2(1+\|\rho\|_{L^2})\big(\|\sqrt{\rho}\mathbf{u}\|_{L^2}+\|\nabla\mathbf{u}\|_{L^2}\big)
  \leq C(\hat{\rho},M)(1+t)^{3-\frac{\alpha+1}{3}}
\end{equation}
for $\varsigma\in(0,1]$, $\alpha\in(1,2)$, and $r>2$.
At last, multiplying $\eqref{a1}_1$ by $\bar{x}^\alpha$ and integrating the resultant over $\mathbb{R}^2$, one gets from H{\"o}lder's inequality, \eqref{3.53}, and \eqref{3.57} that
\begin{align*}
  \frac{\mathrm{d}}{\mathrm{d}t}\int\bar{x}^\alpha\rho\mathrm{d}\mathbf{x}
  &\leq C\int\rho|\mathbf{u}|\bar{x}^{\alpha-1}\log^2(e+|\mathbf{x}|^2)\mathrm{d}\mathbf{x}\notag\\
  &\leq C\big\|\big(\rho(1+|\mathbf{x}|^2)^\frac12\big)^\frac{\alpha+1}{3}\big\|_{L^\frac{3}{\alpha+1}}
  \big\|\bar{x}^{-\frac{2-\alpha}{2}}\mathbf{u}\big\|_{L^{\frac{6}{2-\alpha}}}
  \big\|\rho^{\frac{2-\alpha}{3}}\big\|_{L^{\frac{6}{2-\alpha}}}\notag\\
  &\quad\times\sup_{\mathbf{x}\in\mathbb{R}^2}
  \Big[(1+|\mathbf{x}|^2)^{\frac{\alpha-2}{12}}\log^{\alpha+2}(e+|\mathbf{x}|^2)\Big]
  \notag\\
  &\leq C(1+t)^3,
\end{align*}
which yields \eqref{3.52} after integrating over $(0,t)$.
\end{proof}

Finally, by Zlotnik's inequality, one can establish the desired uniform-in-time $L^\theta$ bound for the density.

\begin{lemma}\label{l3.6}
Under the assumption \eqref{3.5}, there exists a positive constant $\varepsilon_4$ depending only on $\alpha, \hat{\rho}, M, a$, $\gamma, \mu, \lambda, \nu$, and $\eta_0$ such that
\begin{align}\label{3.58}
\sup_{t\in[0,T]}\|\rho\|_{L^\theta}\leq\frac{7}{4}\hat{\rho}
\end{align}
provided $C_0\leq \varepsilon_4$.
\end{lemma}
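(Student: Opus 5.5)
The plan is to apply Zlotnik's inequality (Lemma \ref{lzlo}) to $y(t)=\|\rho\|_{L^\theta}^\theta$. By \eqref{1.18},
\begin{equation*}
y'(t)=-\frac{\theta-1}{2\mu+\lambda}\int\rho^\theta P\,\mathrm{d}\mathbf{x}-\frac{\theta-1}{2\mu+\lambda}\int\rho^\theta\Big(F+\frac12|\mathbf{B}|^2\Big)\mathrm{d}\mathbf{x}.
\end{equation*}
Since $P=a\rho^\gamma$ and $\|\rho\|_{L^1}=1$, H\"older's inequality (interpolating $L^\theta$ between $L^1$ and $L^{\theta+\gamma}$) gives $\int\rho^{\theta+\gamma}\,\mathrm{d}\mathbf{x}\ge y^{1+\gamma/(\theta-1)}$, so the first term is at most $-c\,y^{1+\gamma/(\theta-1)}$. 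The second term is bounded by $C(\|F\|_{L^\infty}+\|\mathbf{B}\|_{L^\infty}^2)\,y$. Because $y\le(2\hat\rho)^\theta$ under \eqref{3.5}, I would take $g(y)=-c\,y^{1+\gamma/(\theta-1)}$ and $b'(t)=C(2\hat\rho)^\theta(\|F\|_{L^\infty}+\|\mathbf{B}\|_{L^\infty}^2)$. It then suffices to show
\begin{equation*}
\int_{t_1}^{t_2}\big(\|F\|_{L^\infty}+\|\mathbf{B}\|_{L^\infty}^2\big)\mathrm{d}t\le C\,C_0^{\kappa}\big(1+(t_2-t_1)\big)
\end{equation*}
for some $\kappa>0$. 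This yields $N_0,N_1\le CC_0^\kappa$, hence $\xi^*$ is small, and $y\le\max\{\hat\rho^\theta,\xi^*\}+N_0\le(7\hat\rho/4)^\theta$ once $C_0\le\varepsilon_4$.

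\textbf{Short time $[0,\sigma(T)]$.} Using Lemma \ref{l2.2} with $q=2$, $r=4$, I get $\|F\|_{L^\infty}\le C\|F\|_{L^2}^{1/2}\|\nabla F\|_{L^4}^{1/2}$. Then \eqref{2.3} gives $\|\nabla F\|_{L^4}\le C\|\rho\dot{\mathbf{u}}\|_{L^4}+C\||\mathbf{B}||\nabla\mathbf{B}|\|_{L^4}$, and I split $\rho\dot{\mathbf u}=\sqrt\rho\cdot\sqrt\rho\dot{\mathbf u}$ or use \eqref{2.2} applied to $\dot{\mathbf u}$. Since $\|F\|_{L^2}$ carries a power of $C_0$ through $\|P\|_{L^2}$, $\|\mathbf{B}\|_{L^4}^2$, and $\|\nabla\mathbf u\|_{L^2}$, the time-weighted bounds \eqref{3.46}, \eqref{3.47}, and $A_2$ make $\int_0^{\sigma(T)}\|F\|_{L^\infty}\,\mathrm{d}t$ integrable, with singular weights like $\sigma^{-3/4}$ that are integrable, and small, yielding a contribution to $N_0$. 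The magnetic term $\|\mathbf B\|_{L^\infty}^2\le C\|\mathbf B\|_{L^2}\|\Delta\mathbf B\|_{L^2}$ is handled the same way using \eqref{3.46}.

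\textbf{Large time $[\sigma(T),T]$, the main obstacle.} Here \eqref{2.2} with $\eta_*=\eta_1(1+t)\log^\alpha(e+t)$ together with Lemma \ref{l3.5} gives $\|\rho\dot{\mathbf u}\|_{L^4}\lesssim(1+t)^{4}(\|\sqrt\rho\dot{\mathbf u}\|_{L^2}+\|\nabla\dot{\mathbf u}\|_{L^2})$, which grows polynomially. I would follow the interpolation in Subsection \ref{sec1.3}:
\begin{equation*}
\|F\|_{L^\infty}\lesssim\|\nabla F\|_{L^4}^{3/16}\|\nabla F\|_{L^{52/29}}^{13/16}.
\end{equation*}
This cuts the growth to $(1+t)^{3/4}$. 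The factor $\|\nabla F\|_{L^{52/29}}$ is controlled by $\|\sqrt\rho\|_{L^{52/3}}\|\sqrt\rho\dot{\mathbf u}\|_{L^2}$ plus magnetic terms; this requires $\theta\ge26/3$, and $\|\rho\|_{L^{26/3}}$ is small through interpolation with the energy, as in \eqref{3.12}. Then Young's inequality splits $(1+t)^{3/4}X$ into $X^2$-type terms, which are integrable via $A_1,A_2$ and small, plus a term of the form $CC_0^\kappa$ per unit time. The key point is to check that the power of $(1+t)$ is absorbed. If the growth cannot be fully absorbed, I would instead absorb it into $N_1(t_2-t_1)$, allowing a linear-in-time bound with a small coefficient. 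This works because $N_1$ only needs to be small, not zero.

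For $\mathbf B$ on $[\sigma(T),T]$, I would use $\|\mathbf B\|_{L^\infty}^2\le C\|\mathbf B\|_{L^2}\|\Delta\mathbf B\|_{L^2}\le CC_0+\|\Delta\mathbf B\|_{L^2}^2$, which is again bounded by $A_1$.
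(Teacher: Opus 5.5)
\textbf{Verdict: the large-time part of your argument has a genuine gap.} The Zlotnik setup itself is fine. Interpolating $L^\theta$ between $L^1$ and $L^{\theta+\gamma}$ is a harmless variant of the paper's $L^\gamma$--$L^{\theta+\gamma}$ interpolation, and your bound $\|\mathbf{B}\|_{L^\infty}^2\le C\|\mathbf{B}\|_{L^2}\|\Delta\mathbf{B}\|_{L^2}$ handles the magnetic part on both time ranges. The short-time part follows the spirit of \eqref{3.62}, apart from two slips:
\begin{itemize}
\item Lemma \ref{l2.2} with $q=2$, $r=4$ gives $\|F\|_{L^\infty}\le C\|F\|_{L^2}^{1/3}\|\nabla F\|_{L^4}^{2/3}$, not exponents $\frac12,\frac12$.
\item $\|\rho\dot{\mathbf{u}}\|_{L^4}$ cannot come from writing $\rho\dot{\mathbf{u}}=\sqrt{\rho}\cdot\sqrt{\rho}\dot{\mathbf{u}}$ without an $L^\infty$ bound on $\rho$. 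You must go through \eqref{2.2}, i.e.\ \eqref{3.61}.
\end{itemize}

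\textbf{The gap on $[\sigma(T),T]$.} After the $L^{52/3}$--$L^4$ interpolation you still face
$(1+t)^{3/4}\|\rho\|_{L^{26/3}}^{13/32}\|\sqrt{\rho}\dot{\mathbf{u}}\|_{L^2}^{13/16}\big(\|\sqrt{\rho}\dot{\mathbf{u}}\|_{L^2}+\|\nabla\dot{\mathbf{u}}\|_{L^2}\big)^{3/16}$.
The only information you invoke is $A_1$ and $A_2$. Their weights $\sigma$ and $\sigma^3$ equal $1$ for $t\ge1$, so they give time-integrability but no decay.

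Young's inequality cannot remove an explicit factor $(1+t)^{3/4}$. Suppose $X$ is only known to lie in $L^2(\sigma(T),T)$. Then $\int_{t_1}^{t_1+1}(1+t)^{3/4}X\,\mathrm{d}t$ is bounded at best by $C(2+t_1)^{3/4}$, which is unbounded as $t_1\to\infty$. Likewise $\int_1^{t_2}(1+t)^{3/4}X\,\mathrm{d}t$ is bounded by Cauchy--Schwarz only by $Ct_2^{5/4}$. The term with $\|\sqrt{\rho}\dot{\mathbf{u}}\|_{L^2}$ merely bounded is even worse. So no bound of the form $N_0+N_1(t_2-t_1)$ uniform in $T$ follows, whether or not $N_1$ is small, and your fallback of absorbing the growth into $N_1(t_2-t_1)$ fails.

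\textbf{The missing step: large-time decay estimates with growing weights.}
\begin{itemize}
\item Test \eqref{3.8} with $q(tP)^{q-1}$. The relation $(2\mu+\lambda)\divv\mathbf{u}=F+P+\frac12|\mathbf{B}|^2$ supplies the damping $t^{q-1}\|P\|_{L^{q+1}}^{q+1}$.
\item Obtain $\int_{\sigma(T)}^T\|P\|_{L^2}^2\,\mathrm{d}t\le C$ from the effective-viscous-flux identity behind \eqref{ef4}.
\item Combine this with the $t$-weighted version of \eqref{3.20} and Gr\"onwall to reach \eqref{ef5}.
\item Redo \eqref{ef} and \eqref{3.37}--\eqref{3.38} with weight $t^{3/2}$. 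This gives $\sup_{t\ge\sigma(T)}t^{3/2}\|\sqrt{\rho}\dot{\mathbf{u}}\|_{L^2}^2+\int_{\sigma(T)}^T t^{3/2}\|\nabla\dot{\mathbf{u}}\|_{L^2}^2\,\mathrm{d}t\le C$, and the analogues for $|\mathbf{B}||\nabla\mathbf{B}|$ and $|\mathbf{B}||\Delta\mathbf{B}|$.
\end{itemize}

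Only then does the growth cancel exactly. Write $t^{3/4}=t^{39/64}\,t^{9/64}$. The factor $t^{39/64}\|\sqrt{\rho}\dot{\mathbf{u}}\|_{L^2}^{13/16}=(t^{3/4}\|\sqrt{\rho}\dot{\mathbf{u}}\|_{L^2})^{13/16}$ is bounded. The factor $t^{9/64}\|\nabla\dot{\mathbf{u}}\|_{L^2}^{3/16}=(t^{3/2}\|\nabla\dot{\mathbf{u}}\|_{L^2}^2)^{3/32}$ is controlled in time. H\"older in time then leaves $(t_2-t_1)^{29/32}$ times the small factor $\|\rho\|_{L^{26/3}}^{13/32}$. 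Young's inequality turns this into $N_0+N_1(t_2-t_1)$, as in \eqref{3.64}.
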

\begin{proof}
From $\eqref{a1}_1$ and \eqref{1.5}, we derive that
\begin{equation*}
\frac{\mathrm{d}}{\mathrm{d}t}\int\rho^\theta\mathrm{d}\mathbf{x}
=
-\frac{\theta-1}{2\mu+\lambda}\int\rho^\theta\Big(P+F+\frac12|\mathbf{B}|^2\Big)\mathrm{d}\mathbf{x}\leq
-\frac{a(\theta-1)}{2\mu+\lambda}\|\rho\|_{L^{\theta+\gamma}}^{\theta+\gamma}
+\frac{\theta-1}{2\mu+\lambda}\|\rho\|_{L^\theta}^\theta\big(\|F\|_{L^\infty}+\|\mathbf{B}\|_{L^\infty}^2\big).
\end{equation*}
Noting that
\begin{equation*}
\|\rho\|_{L^\theta}^\theta\leq \|\rho\|_{L^\gamma}^{\frac{\gamma^2}{\theta}}
\|\rho\|_{L^{\theta+\gamma}}^{\frac{\theta^2-\gamma^2}{\theta}}\leq \big(a^{-1}(\gamma-1)C_0\big)^\frac{\gamma}{\theta}\|\rho\|_{L^{\theta+\gamma}}^{\frac{\theta^2-\gamma^2}{\theta}}
\leq a^{-\frac{\gamma}{\theta}}(\gamma-1)^\frac{\gamma}{\theta}\|\rho\|_{L^{\theta+\gamma}}^{\frac{\theta^2-\gamma^2}{\theta}}
\end{equation*}
provided $C_0\leq1$, one deduces that
\begin{equation}\label{3.59}
   \frac{\mathrm{d}}{\mathrm{d}t}\|\rho\|_{L^\theta}^\theta\leq
   -\frac{a\tilde{c}(\theta-1)}{2\mu+\lambda}
   \big(\|\rho\|_{L^{\theta}}^{\theta}\big)^\frac{\theta}{\theta-\gamma}
   +\frac{\theta-1}{2\mu+\lambda}\|\rho\|_{L^\theta}^\theta\big(\|F\|_{L^\infty}+\|\mathbf{B}\|_{L^\infty}^2\big),
\end{equation}
where $\tilde{c}=\tilde{c}(\alpha,a,\gamma)=a^{-\frac{\theta^2}{\gamma(\theta-\gamma)}}
(\gamma-1)^{\frac{\theta^2}{\gamma(\theta-\gamma)}}$.

Set
\begin{align*}
 y(t)=\|\rho\|_{L^\theta}^\theta,\ \
  f(y)=-\frac{a\widetilde{c}(\theta-1)}{2\mu+\lambda}y^\frac{\theta}{\theta-\gamma}(t),\ \
  b(t)=\int_0^t\frac{\theta-1}{2\mu+\lambda}
  \big(\|F\|_{L^\infty}+\|\mathbf{B}\|_{L^\infty}^2\big)y(\tau)\mathrm{d}\tau.
\end{align*}
It follows that
\begin{equation}\label{3.60}
y'(t)\leq f(y)+b'(t).
\end{equation}
Using Lemma \ref{l2.4} with $\eta_*=\eta_1(1+t)\log^{\alpha}(e+t)$, we get from \eqref{2.2}, \eqref{3.12}, \eqref{3.52}, and \eqref{3.56} that
\begin{align}\label{3.61}
  \|\rho\dot{\mathbf{u}}\|_{L^4}
   \leq
  C(\hat{\rho},M)(1+t)^4\big(\|\sqrt{\rho}\dot{\mathbf{u}}\|_{L^2}
  +\|\nabla\dot{\mathbf{u}}\|_{L^2} \big).
\end{align}
It follows from \eqref{3.47}, Lemma \ref{l2.2}, and Lemma \ref{l2.6} that, for $0\leq t_1<t_2\leq \sigma(T)$,
\begin{align}\label{3.62}
&|b(t_2)-b(t_1)|
\notag\\
&\leq C
 \int_0^{\sigma(T)}\big(\|F\|_{L^\infty}+\|\mathbf{B}\|_{L^\infty}^2\big)\mathrm{d}t\notag\\
 &\leq C\int_0^{\sigma(T)}\|F\|_{L^2}^{\frac13}\|\nabla F\|_{L^4}^{\frac23}\mathrm{d}t
 +C\int_0^{\sigma(T)}\|\mathbf{B}\|_{L^4}\|\nabla \mathbf{B}\|_{L^2}^{\frac12}\|\Delta \mathbf{B}\|_{L^2}^{\frac12}\mathrm{d}t
 \notag\\
 &\leq C\int_0^{\sigma(T)}\big(\|\nabla\mathbf{u}\|_{L^{2}}+\|P\|_{L^{2}}
 +\|\mathbf{B}\|_{L^{4}}^2\big)^\frac13
 \big(\|\rho\dot{\mathbf{u}}\|_{L^4}+\||\mathbf{B}||\nabla\mathbf{B}|\|_{L^4}\big)^\frac23\mathrm{d}t
 +C\int_0^{\sigma(T)}\|\nabla \mathbf{B}\|_{L^2}^{\frac12}\|\Delta \mathbf{B}\|_{L^2}^{\frac12}\mathrm{d}t\notag\\
 &\leq C\int_0^{\sigma(T)}\big(\|\nabla\mathbf{u}\|_{L^{2}}^\frac13+\|P\|_{L^{2}}^\frac13 +\|\mathbf{B}\|_{L^{4}}^\frac23\big)
 \Big(\|\sqrt{\rho}\dot{\mathbf{u}}\|_{L^2}^\frac23+\|\nabla\dot{\mathbf{u}}\|_{L^2}^\frac23
 +\|\Delta\mathbf{B}\|_{L^2}^\frac23+\||\mathbf{B}||\Delta\mathbf{B}|\|_{L^2}^\frac13
  \Big)\mathrm{d}t\notag\\
  &\quad +C\int_0^{\sigma(T)}\|\nabla \mathbf{B}\|_{L^2}^{\frac12}\|\Delta \mathbf{B}\|_{L^2}^{\frac12}\mathrm{d}t\notag\\
  &\leq
   C\sup_{t\in[0,\sigma(T)]}
   \Big(\sigma^\frac16\|\nabla\mathbf{u}\|_{L^{2}}^\frac13\Big)\times
  \Bigg[\bigg(\int_{0}^{\sigma(T)}\big(\|\sqrt{\rho}\dot{\mathbf{u}}\|_{L^{2}}^{2}+\|\Delta\mathbf{B}\|_{L^2}^2\big)
  \mathrm{d}t\bigg)^{\frac{1}{3}}\bigg(\int_{0}^{\sigma(T)}\sigma^{-\frac14}\mathrm{d}t\bigg)^{\frac{2}{3}}
\notag\\&\quad+\bigg(\int_0^{\sigma(T)}\sigma\|\nabla\dot{\mathbf{u}}\|_{L^2}^2
\mathrm{d}t\bigg)^{\frac13}\bigg(\int_0^{\sigma(T)}
\sigma^{-\frac34}\mathrm{d}t\bigg)^{\frac23}
+\bigg(\int_0^{\sigma(T)}\sigma^3\||\mathbf{B}||\Delta\mathbf{B}|\|_{L^2}^2
\mathrm{d}t\bigg)^{\frac16}\bigg(\int_0^{\sigma(T)}
\sigma^{-\frac45}\mathrm{d}t\bigg)^{\frac56}\Bigg]\notag\\
  &\quad+C\sup_{t\in[0,\sigma(T)]}\|P\|_{L^{2}}^\frac13\times
  \Bigg[\bigg(\int_{0}^{\sigma(T)}\big(\|\sqrt{\rho}\dot{\mathbf{u}}\|_{L^{2}}^{2}+\|\Delta\mathbf{B}\|_{L^2}^2\big)\mathrm{d}t\bigg)^{\frac{1}{3}}\bigg(\int_{0}^{\sigma(T)}1\mathrm{d}t\bigg)^{\frac{2}{3}}
\notag\\&\quad+\bigg(\int_0^{\sigma(T)}\sigma\|\nabla\dot{\mathbf{u}}\|_{L^2}^2
\mathrm{d}t\bigg)^{\frac13}\bigg(\int_0^{\sigma(T)}
\sigma^{-\frac12}\mathrm{d}t\bigg)^{\frac23}+\bigg(\int_0^{\sigma(T)}\sigma^3\||\mathbf{B}||\Delta\mathbf{B}|\|_{L^2}^2
\mathrm{d}t\bigg)^{\frac16}\bigg(\int_0^{\sigma(T)}
\sigma^{-\frac35}\mathrm{d}t\bigg)^{\frac56}\Bigg]\notag\\
&\quad + C\sup_{t\in[0,\sigma(T)]}
   \Big(\sigma^\frac14\|\nabla\mathbf{B}\|_{L^{2}}^\frac12\Big)\times
  \bigg(\int_{0}^{\sigma(T)}\sigma\|\Delta\mathbf{B}\|_{L^{2}}^{2}\mathrm{d}t\bigg)^\frac14
\bigg(\int_0^{\sigma(T)}\sigma^{-\frac23}\mathrm{d}t\bigg)^{\frac34}\notag\\
&\leq C_1(\hat{\rho},M,K)C_0^\frac{\alpha+2}{6(7\alpha+5)}.
\end{align}
By applying Lemma \ref{lzlo} with
\begin{equation*}
  N_1=0, \ \
  N_0=C_1(\hat{\rho},M,K)C_0^\frac{\alpha+2}{6(7\alpha+5)}, \ \
  \xi^*=1,
\end{equation*}
one sees that
\begin{equation*}
  f(\xi)=-\frac{a\tilde{c}(\theta-1)}{2\mu+\lambda}\xi^\frac{\theta}{\theta-\gamma}\leq-N_1=0,
  \ \ \ \text{for all} \ \xi\geq \xi^*=1.
\end{equation*}
Hence, it holds that
\begin{equation}\label{3.63}
  y(t)\leq \max \big\{\hat{\rho}^\theta, 1\big\} + N_{0}\leq\hat{\rho}^\theta+C_1(\hat{\rho},M,K)C_0^\frac{\alpha+2}{6(7\alpha+5)}
  \leq \Big(\frac{3}{2}\hat{\rho}\Big)^\theta
\end{equation}
provided
\begin{equation*}
  C_0\leq \varepsilon_{4,1}\triangleq
  \min\left\{\varepsilon_3,
  \bigg(\frac{\hat{\rho}^\theta}{C_1(\hat{\rho},M,K)}\bigg)
  ^\frac{6(7\alpha+5)}{\alpha+2}\right\}.
\end{equation*}

For $\sigma(T)\leq t_1<t_2\leq T$, we first establish large-time weighted estimates for the material derivative.
We start from the equation \eqref{3.8} satisfied by the pressure:
\begin{equation*}
  P_t+\divv(P\mathbf{u})+(\gamma-1)P\divv \mathbf{u}=0.
\end{equation*}
Testing the above equality with $q(tP)^{q-1} \ (q\geq2)$ yields
\begin{align*}
   &\frac{\mathrm{d}}{\mathrm{d}t}\big(t^{q-1}\|P\|_{L^q}^q\big)
 +\frac{q\gamma-1}{2\mu+\lambda}t^{q-1}\|P\|_{L^{q+1}}^{q+1}
 \notag\\
 &\leq
 Ct^{q-2}\|P\|_{L^q}^q
 -\frac{q\gamma-1}{2(2\mu+\lambda)}\int t^{q-1}P^q(2F+|\mathbf{B}|^2)\mathrm{d}\mathbf{x}
 \notag\\
 &\leq
  Ct^{q-2}\|P\|_{L^q}^q
  +\frac{q\gamma-1}{2(2\mu+\lambda)}t^{q-1}\|P\|_{L^{q+1}}^{q+1}
  +Ct^{q-1}\Big(\|F\|_{L^{q+1}}^{q+1}+\||\mathbf{B}|^2\|_{L^{q+1}}^{q+1}\Big),
\end{align*}
and hence
\begin{align}\label{ef1}
   \frac{\mathrm{d}}{\mathrm{d}t}\big(t^{q-1}\|P\|_{L^q}^q\big)
 +t^{q-1}\|P\|_{L^{q+1}}^{q+1}
\leq
  Ct^{q-2}\|P\|_{L^q}^q
  +Ct^{q-1}\Big(\|F\|_{L^{q+1}}^{q+1}+\||\mathbf{B}|^2\|_{L^{q+1}}^{q+1}\Big).
\end{align}
In particular, taking $q=2$ in \eqref{ef1}, one gets from \eqref{3.40} and \eqref{3.45} that
\begin{align}\label{ef2}
   &\frac{\mathrm{d}}{\mathrm{d}t}\big(t\|P\|_{L^2}^2\big)
 +t\|P\|_{L^{3}}^{3}
 \notag\\
&\leq
  C\|P\|_{L^2}^2
  +Ct\big(\|F\|_{L^{3}}^{3}+\||\mathbf{B}|^2\|_{L^{3}}^{3}\big)
  \notag\\
  &\leq
  C\|P\|_{L^2}^2+\frac{1}{8\widehat{C}}t\|\sqrt{\rho}\dot{\mathbf{u}}\|_{L^2}^2
  +Ct(1+A_1(T))\big(\|\nabla\mathbf{u}\|_{L^{2}}+\|\nabla\mathbf{B}\|_{L^{2}}+\|P\|_{L^{2}}\big)^4
  \notag\\
  &\leq
  C\|P\|_{L^2}^2+\frac{1}{8\widehat{C}}t\|\sqrt{\rho}\dot{\mathbf{u}}\|_{L^2}^2
  +Ct\big(\|\nabla\mathbf{u}\|_{L^{2}}+\|\nabla\mathbf{B}\|_{L^{2}}+\|P\|_{L^{2}}\big)^4,
  \quad \text{for } t\in (\sigma(T),T),
\end{align}
owing to
\begin{align*}
C\|F\|_{L^{3}}^{3}
\leq C \|F\|_{L^2}^\frac32\|\nabla F\|_{L^\frac32}^\frac32
&\leq
C\big(\|\nabla\mathbf{u}\|_{L^{2}}+\|P\|_{L^{2}}+\|\mathbf{B}\|_{L^{4}}^2\big)^\frac32
\big(\|\sqrt{\rho}\|_{L^6}\|\sqrt{\rho}\dot{\mathbf{u}}\|_{L^2}+\||\mathbf{B}||\nabla\mathbf{B}|\|_{L^\frac32}\big)^\frac32
\notag\\
&\leq
C\big(\|\nabla\mathbf{u}\|_{L^{2}}+\|\nabla\mathbf{B}\|_{L^{2}}+\|P\|_{L^{2}}\big)^\frac32
\big(\|\sqrt{\rho}\dot{\mathbf{u}}\|_{L^2}+\|\nabla\mathbf{B}\|_{L^{2}}^\frac53\big)^\frac32.
\end{align*}
Arguing as in \eqref{3.20}, but with $\sigma$ replaced by $t$, and then adding $\widehat{C}\times\eqref{ef2}$, we obtain
\begin{align}\label{ef3}
&\frac{\mathrm{d}}{\mathrm{d}t}\big(t\mathcal{E}_1(t)+\widetilde{C}t\|\nabla\mathbf{B}\|^2_{L^2}
+\widehat{C}t\|P\|_{L^2}^2\big)+
\frac14t\big(\|\sqrt{\rho}\dot{\mathbf{u}}\|_{L^2}^2+\nu\|\Delta\mathbf{B}\|_{L^2}^2+\widehat{C}\|P\|_{L^{3}}^{3}\big)
\notag\\
&\leq
 C\big(\|\nabla\mathbf{u}\|^2_{L^2}+\|\nabla\mathbf{B}\|^2_{L^2}+\|P\|_{L^2}^2\big)
 +Ct\big(\|\nabla\mathbf{u}\|_{L^{2}}^2+\|\nabla\mathbf{B}\|_{L^{2}}^2+\|P\|_{L^{2}}^2\big)^2,
 \quad \text{for } t\in (\sigma(T),T),
\end{align}
where we have used Lemma \ref{l2.6} and the following estimates:
\begin{align*}
Ct\|\nabla\mathbf{u}\|_{L^3}^3+C\int t P|\nabla\mathbf{u}|^2\mathrm{d}\mathbf{x}
&\leq Ct(\|\nabla\mathbf{u}\|_{L^3}^3+\|P\|_{L^3}^3)
\notag\\
&\leq
Ct\big(\|\sqrt{\rho}\|_{L^3}^3\|\sqrt{\rho}\dot{\mathbf{u}}\|_{L^2}^3+\|P\|_{L^3}^3+\|\mathbf{B}\|_{L^6}^6\big)
\notag\\
&\leq
Ct\Big(A_2^\frac12(T)\|\sqrt{\rho}\|_{L^3}^3\|\sqrt{\rho}\dot{\mathbf{u}}\|_{L^2}^2+\|P\|_{L^3}^3+\|\nabla\mathbf{B}\|_{L^2}^4\Big)
\notag\\
&\leq
\frac18t\|\sqrt{\rho}\dot{\mathbf{u}}\|_{L^2}^2
 +\frac{\widehat{C}}{2}t\|P\|_{L^3}^3+Ct\|\nabla\mathbf{B}\|_{L^2}^4, \quad \text{for } t\in (\sigma(T),T).
\end{align*}
Noting that
\begin{align*}
 2\|P\|_{L^2}^2
 &=\int P\big(-2F-|\mathbf{B}|^2+2(2\mu+\lambda)\divv\mathbf{u}\big)\mathrm{d}\mathbf{x}
 \notag\\
 &\leq
 2\|P\|_{L^\frac{4\gamma}{4\gamma-1}}\|F\|_{L^{4\gamma}}
 +C\|P\|_{L^2}(\|\nabla\mathbf{u}\|_{L^2}+\|\mathbf{B}\|_{L^4}^2)
 \notag\\
 &\leq
 C\|\rho^\frac12\|_{L^2}\|\rho^{\gamma-\frac12}\|_{L^\frac{4\gamma}{2\gamma-1}}
 \|\nabla F\|_{L^{\frac{4\gamma}{2\gamma+1}}}
 +C\|P\|_{L^2}(\|\nabla\mathbf{u}\|_{L^2}+\|\nabla\mathbf{B}\|_{L^2})
 \notag\\
 &\leq
 C\|\rho^\frac12\|_{L^2}\|\rho^{\gamma-\frac12}\|_{L^\frac{4\gamma}{2\gamma-1}}
 \big(\|\sqrt{\rho}\|_{L^{4\gamma}}\|\sqrt{\rho}\dot{\mathbf{u}}\|_{L^2}
 +\|\mathbf{B}\|_{L^{4\gamma}}\|\nabla\mathbf{B}\|_{L^2})
 +C\|P\|_{L^2}(\|\nabla\mathbf{u}\|_{L^2}+\|\nabla\mathbf{B}\|_{L^2})
 \notag\\
 &\leq
  \|P\|_{L^2}^2
  +C\big(\|\sqrt{\rho}\dot{\mathbf{u}}\|_{L^2}^2+\|\nabla\mathbf{u}\|_{L^2}^2+\|\nabla\mathbf{B}\|_{L^2}^2
  +\|\nabla\mathbf{B}\|_{L^2}^{\frac{2\gamma-1}{\gamma}}\|\nabla\mathbf{B}\|_{L^2}^2\big),
\end{align*}
we have
\begin{equation}\label{ef4}
  \int_{\sigma(T)}^T\|P\|_{L^2}^2\mathrm{d}t
  \leq C\int_{\sigma(T)}^T\Big(\|\sqrt{\rho}\dot{\mathbf{u}}\|_{L^2}^2+\|\nabla\mathbf{u}\|_{L^2}^2
  +\|\nabla\mathbf{B}\|_{L^2}^2+A_1^\frac{2\gamma-1}{2\gamma}(T)\|\nabla\mathbf{B}\|_{L^2}^2\Big)\mathrm{d}t
  \leq C.
\end{equation}
An application of Gr\"onwall's inequality to \eqref{ef3} over $(\sigma(T),T)$,
combined with \eqref{ef4} and \eqref{3.21}, shows that
\begin{align}\label{ef5}
  \sup_{t\in[\sigma(T),T]}\big(t\|\nabla \mathbf{u}\|_{L^2}^2+t\|\nabla \mathbf{B}\|_{L^2}^2+t\|P\|_{L^2}^2\big)
  +\int_{\sigma(T)}^Tt\big(\|\sqrt{\rho} \dot{\mathbf{u}}\|_{L^2}^2
  +\|\Delta\mathbf{B}\|_{L^2}^2+\|P\|_{L^3}^3\big)\mathrm{d}t\leq C.
\end{align}
By arguing as in the derivation of \eqref{ef}, but with the weight $\sigma^3$ replaced by $t^\frac32$, and then integrating over $(\sigma(T),T)$, we get
\begin{align*}
&\sup_{t\in[\sigma(T),T]}\big(t^\frac32\|P\|_{L^3}^3\big)
  +\int_{\sigma(T)}^Tt^\frac32\|P\|_{L^4}^4\mathrm{d}t \\
  &\leq 
  C\int_{\sigma(T)}^Tt^\frac32\big(\|F\|_{L^4}^4+\||\mathbf{B}|^2\|_{L^4}^4\big)\mathrm{d}t
 +C\int_{\sigma(T)}^Tt^\frac12\|P\|_{L^3}^3\mathrm{d}t
  \notag\\
  &\leq
  C\sup_{t\in[\sigma(T),T]}\Big(t^\frac12\|\nabla\mathbf{u}\|_{L^{2}}+t^\frac12\|\nabla\mathbf{B}\|_{L^{2}}
+t^\frac12\|P\|_{L^{2}}\Big)
  \int_{\sigma(T)}^T t\Big(A_2^\frac12(T)\|\sqrt{\rho} \dot{\mathbf{u}}\|_{L^2}^2
  +A_1^\frac12(T)\|\Delta\mathbf{B}\|_{L^2}^2\Big)\mathrm{d}t
  \notag\\
  &\quad
  +C\sup_{t\in[\sigma(T),T]}\big(t^\frac32\|\nabla \mathbf{B}\|_{L^2}^4\big)\int_{\sigma(T)}^T\|\nabla \mathbf{B}\|_{L^2}^2\mathrm{d}t+C\notag\\
  &\leq C.
\end{align*}
Thus, by an argument similar to that leading to \eqref{3.37}--\eqref{3.38}, but with $\sigma^3$ replaced by $t^\frac32$, we obtain
\begin{align*}
&\sup_{t\in[\sigma(T),T]}\big(t^\frac32\|\sqrt{\rho}\dot{\mathbf{u}}\|_{L^2}^2
+t^\frac32\||\mathbf{B}||\nabla \mathbf{B}|\|_{L^2}^2\big)
 +\int_{\sigma(T)}^Tt^\frac32\big(\|\nabla \dot{\mathbf{u}}\|_{L^2}^2
   +\||\mathbf{B}||\Delta \mathbf{B}|\|_{L^2}^2\big)\mathrm{d}t
 \notag\\
 &\leq
 C\sup_{t\in[\sigma(T),T]}\Big(t^\frac12\|\nabla\mathbf{u}\|_{L^{2}}+t^\frac12\|\nabla\mathbf{B}\|_{L^{2}}
+t^\frac12\|P\|_{L^{2}}\Big)
  \int_{\sigma(T)}^T t\Big(A_2^\frac12(T)\|\sqrt{\rho} \dot{\mathbf{u}}\|_{L^2}^2
  +A_1^\frac12(T)\|\Delta\mathbf{B}\|_{L^2}^2\Big)\mathrm{d}t
\notag\\
  &\quad
   +C\sup_{t\in[\sigma(T),T]}\big(t^\frac12\|\nabla \mathbf{B}\|_{L^2}^2\big)\int_{\sigma(T)}^Tt\|\Delta \mathbf{B}\|_{L^2}^2\mathrm{d}t
   +C\int_{\sigma(T)}^Tt^\frac32\|P\|_{L^4}^4\mathrm{d}t+C
 \leq C.
\end{align*}
This along with Lemma \ref{2.2}, Lemma \ref{l2.6}, Young's inequality, \eqref{ooz}, and \eqref{3.61} implies that
\begin{align}\label{3.64}
&|b(t_2)-b(t_1)|\notag\\
&\leq C
 \int_{t_1}^{t_2}\big(\|F\|_{L^\infty}+\|\mathbf{B}\|_{L^\infty}^2\big)\mathrm{d}t
 \notag\\
  &\leq 
  C\int_{t_1}^{t_2}\|F\|_{L^{\frac{52}{3}}}^{\frac{13}{16}}\|\nabla F\|_{L^4}^{\frac{3}{16}}\mathrm{d}t
  +C\int_{t_1}^{t_2}\|\mathbf{B}\|_{L^4}\|\nabla \mathbf{B}\|_{L^2}^{\frac12}\|\Delta \mathbf{B}\|_{L^2}^{\frac12}\mathrm{d}t
  \notag\\
  &\leq 
   C\int_{t_1}^{t_2}\|\nabla F\|_{L^{\frac{52}{29}}}^{\frac{13}{16}}\|\nabla F\|_{L^4}^{\frac{3}{16}}\mathrm{d}t
   +C\int_{t_1}^{t_2}\|\nabla \mathbf{B}\|_{L^2}\|\Delta \mathbf{B}\|_{L^2}^{\frac12}\mathrm{d}t
   \notag\\
  &\leq 
  C\int_{t_1}^{t_2} \Big(\|\sqrt{\rho}\|_{L^{\frac{52}{3}}}^{\frac{13}{16}}
  \|\sqrt{\rho}\dot{\mathbf{u}}\|_{L^2}^{\frac{13}{16}}+\||\mathbf{B}||\nabla\mathbf{B}|\|_{L^{\frac{52}{29}}}^{\frac{13}{16}}\Big)
  (1+t)^\frac{3}{4}
  \Big(\|\sqrt{\rho}\dot{\mathbf{u}}\|_{L^2}^\frac{3}{16}+\|\nabla\dot{\mathbf{u}}\|_{L^2}^\frac{3}{16}
  +\||\mathbf{B}||\nabla\mathbf{B}|\|_{L^4}^{\frac{3}{16}}\Big)\mathrm{d}t
  \notag\\
  &\quad
   +C\int_{t_1}^{t_2}\|\nabla \mathbf{B}\|_{L^2}\|\Delta \mathbf{B}\|_{L^2}^{\frac12}\mathrm{d}t
    \notag\\
  &\leq
   C\Big(1+\sup_{t\in[\sigma(T),T]}\big(\sigma\|\nabla \mathbf{B}\|_{L^2}^2\big)
+\sup_{t\in[\sigma(T),T]}\big(\sigma^3\|\sqrt{\rho}\dot{\mathbf{u}}\|_{L^2}^2\big)\Big)
  \int_{\sigma(T)}^T\big(\sigma\|\sqrt{\rho}\dot{\mathbf{u}}\|_{L^{2}}^{2}
  +\sigma\|\Delta \mathbf{B}\|_{L^2}^2+\sigma^3\|\nabla\dot{\mathbf{u}}\|_{L^{2}}^{2}\big)\mathrm{d}t
   \notag\\
  &\quad
  +C\sup_{t\in[\sigma(T),T]}\big(\sigma\|\nabla \mathbf{B}\|_{L^2}^2\big)\int_{\sigma(T)}^T\sigma^3\||\mathbf{B}||\Delta\mathbf{B}|\|_{L^2}^2\mathrm{d}t
  +C\sup_{t\in[\sigma(T),T]}\big(t^\frac{3}{4}\|\sqrt{\rho}\dot{\mathbf{u}}\|_{L^2}\big)
 \|\rho\|_{L^\frac{26}{3}}^{\frac{13}{32}}
 \notag\\
  &\quad
 +C\Big(1+\sup_{t\in[\sigma(T),T]}\big(\sigma^3\||\mathbf{B}||\nabla \mathbf{B}|\|_{L^2}^2\big)\Big)\sup_{t\in[\sigma(T),T]}\big(t^\frac38\||\mathbf{B}||\nabla \mathbf{B}|\|_{L^2}\big)
 \bigg(\int_{\sigma(T)}^Tt^\frac32\||\mathbf{B}||\Delta \mathbf{B}|\|_{L^2}^2\mathrm{d}t\bigg)^\frac14
\bigg(\int_{t_1}^{t_2}1\mathrm{d}t\bigg)^\frac34
\notag\\
  &\quad
 +C\sup_{t\in[\sigma(T),T]}\Big(t^\frac{39}{64}\|\sqrt{\rho}\dot{\mathbf{u}}\|_{L^2}^\frac{13}{16}\Big)
\bigg(\int_{\sigma(T)}^Tt^\frac32\|\nabla\dot{\mathbf{u}}\|_{L^2}^2\mathrm{d}t\bigg)^\frac{3}{32}
\bigg(\int_{t_1}^{t_2}1\mathrm{d}t\bigg)^\frac{29}{32}\|\rho\|_{L^\frac{26}{3}}^{\frac{13}{32}}
 +\frac{a\tilde{c}(\theta-1)}{2(2\mu+\lambda)}(t_2-t_1)
\notag\\
&\leq \frac{a\tilde{c}(\theta-1)}{2\mu+\lambda}(t_2-t_1)+
C_5(\hat{\rho})C_0^\frac{19-\alpha}{32(7\alpha+5)}.
\end{align}
By choosing
\begin{equation*}
  N_0=C_5(\hat{\rho})C_0^\frac{19-\alpha}{32(7\alpha+5)}, \ \
  N_1=\frac{a\tilde{c}(\theta-1)}{2\mu+\lambda}, \ \
  \xi^*=1,
\end{equation*}
we have that
\begin{equation*}
  f(\xi)=-\frac{a\tilde{c}(\theta-1)}{2\mu+\lambda}\xi^\frac{\theta}{\theta-\gamma}\leq-N_1,
  \ \ \ \text{for all} \ \xi\geq \xi^*=1.
\end{equation*}
It thus follows from Lemma \ref{lzlo} that
\begin{equation}\label{3.65}
y(t)\leq \max \big\{\hat{\rho}^\theta, 1\big\} + N_{0}\leq\hat{\rho}^\theta+C_5(\hat{\rho})C_0^\frac{19-\alpha}{32(7\alpha+5)}
  \leq \Big(\frac{3}{2}\hat{\rho}\Big)^\theta
\end{equation}
provided
\begin{equation*}
  C_0\leq \varepsilon_{4,2}\triangleq
  \min\left\{\varepsilon_3,\bigg(\frac{\hat{\rho}^\theta}
  {C_5(\hat{\rho})}\bigg)^\frac{32(7\alpha+5)}{19-\alpha}\right\}.
\end{equation*}
Therefore, the desired \eqref{3.58} follows from \eqref{3.63} and \eqref{3.65}
as long as $C_0\leq\varepsilon_4\triangleq\min\{\varepsilon_{4,1},\varepsilon_{4,2}\}$.
\end{proof}

Now we are ready to prove Proposition $\ref{p3.1}$.

\begin{proof}[Proof of Proposition \ref{p3.1}.]
Proposition \ref{p3.1} follows from Lemmas \ref{l3.3}, \ref{l3.4}, and \ref{l3.6} if we select $\varepsilon=\varepsilon_4$.
\end{proof}

\section{Proof of Theorem 1.1}\label{sec4}

With the \textit{a priori} estimates established in Section \ref{sec3}, we are now in a position to prove Theorem \ref{t1.1}.

\begin{proof}[Proof of Theorem \ref{t1.1}.]
Let $(\rho_0, \mathbf{u}_0, \mathbf{B}_0)$ be initial data as described in the theorem.
For $\epsilon>0$, let $j_\epsilon=j_\epsilon(\mathbf{x})$ be the standard mollifier, and
define the approximate initial data $(\rho_0^\epsilon, \mathbf{u}_0^\epsilon, \mathbf{B}_0^\epsilon)$:
\begin{equation*}
\begin{cases}
\mathbf{u}_0^\epsilon=J_\epsilon*\mathbf{u}_0,~~
\mathbf{B}_0^\epsilon=J_\epsilon*\mathbf{B}_0,\\
\rho_0^\epsilon=J_\epsilon\ast\rho_0+\phi\epsilon,  \ \ \text{with} \
\phi=\phi(\epsilon, \mathbf{x})\triangleq \psi_{1/\epsilon}(\mathbf{x})+(1-\psi_{1/\epsilon}(\mathbf{x}))e^{-|\mathbf{x}|^2}\leq 1,
\end{cases}
\end{equation*}
where the cut-off function $\psi$ is given in \eqref{3.54}.
Then we have
\begin{align*}
\bar{x}^\alpha\rho_0^\epsilon\in L^1,\ \
(\rho_0^\epsilon-\phi\epsilon)\in H^2,\ \
\inf_{\mathbf{x}\in B_{1/\epsilon}}\{\rho_0^\epsilon(\mathbf{x})\}\geq \epsilon, \ \ (\mathbf{u}_0^\epsilon,\mathbf{B}_0^\epsilon)\in D^2\cap D^1,\ \
(\sqrt{\rho_0^\epsilon}\mathbf{u}_0^\epsilon,\mathbf{B}_0^\epsilon)\in L^2.
\end{align*}

By applying Lemma \ref{l2.1}, there exists a time $T_*>0$ such that the problem \eqref{a1}--\eqref{a3}
with initial data $(\rho_0^\epsilon, \mathbf{u}_0^\epsilon, \mathbf{B}_0^\epsilon)$
admits a unique strong solution $(\rho^\epsilon,{\bf u}^\epsilon,{\bf B}^\epsilon)$ on $\mathbb{R}^2\times(0,T_*]$ satisfying
\begin{equation*}
\bar{x}^\alpha\rho^\epsilon \in L^\infty(0,T; L^{1}), \ \
\rho^\epsilon-\phi\epsilon\in C([0,T_*];H^2),\
({\bf u}^\epsilon,{\bf B}^\epsilon)\in C([0,T_*];D^2\cap D^1), \ \inf\limits_{({\bf x},t)\in B_{1/\epsilon}\times [0,T_*]}\rho^\epsilon({\bf x},t)>0.
\end{equation*}
It follows from \eqref{1.10} and \eqref{3.2}--\eqref{3.4} that
\begin{equation*}
  A_1(0)=A_2(0)=0, \ \ A_3(0)\leq 2M^2\leq K,
\end{equation*}
which implies that there is $T_1\in(0,T_*]$ such that \eqref{3.5} holds for $T=T_1$.
Set
\begin{equation}\label{4.1}
  T^*=\sup\{T\,| \,\eqref{3.5} \ \text{holds}\}.
\end{equation}
Obviously $T^*\ge T_1>0$. We now claim
\begin{equation}\label{4.2}
  T^*=\infty.
\end{equation}
Otherwise, one gets from Lemma \ref{l3.5} that, for $0<T< T^*$,
\begin{align*}
  \sup_{t\in[0,T]}\|\bar{x}^\alpha\rho^\epsilon\|_{L^1}\leq C(T),
\end{align*}
which shows the tightness of $\{\rho^\epsilon\}$ at infinity: for any $\delta>0$, there exists $R>0$ such that
\begin{equation}\label{4.3}
  \sup_{\epsilon>0}\sup_{t\in[0,T]}\int_{\mathbb{R}^2\backslash B_R}\rho^\epsilon(t,\mathbf{x})\mathrm{d}\mathbf{x}\leq \delta.
\end{equation}
Note that Lemmas \ref{l3.3}, \ref{l3.4}, and \ref{l3.6} hold independently of the lower bound of initial density, the time of existence, and the parameter $\epsilon$.
Therefore, one infers from Proposition \ref{p3.1} that \eqref{3.6} is valid for all $0<T< T^*$ provided $C_0\leq \varepsilon$. The weak lower semicontinuity of the norms, along with
these uniform bounds, allows us to take the limit as $t\rightarrow T^*$.
Thus, we obtain a solution at time $T^*$ with regularity sufficient to serve as initial data. Applying Lemma \ref{l2.1} to this data yields an extension of solutions until some
$T^{**}>T^*$ such that \eqref{3.5} holds for any $0< T<T^{**}$, which contradicts \eqref{4.1}. Hence, \eqref{4.2} is true.

For any fixed $\tau$ and $T$ with $0<\tau<T<\infty$, it follows from Section \ref{sec3} that the approximate solutions $(\rho^\epsilon,\mathbf{u}^\epsilon,\mathbf{B}^\epsilon)$ satisfy the uniform bounds
\begin{align*}
\begin{cases}
  \rho^\epsilon \in L^\infty(0,T;L^1\cap L^\theta),\ \
  (\sqrt{\rho^\epsilon}\mathbf{u}^\epsilon,\mathbf{B}^\epsilon) \in L^\infty(0,T;L^2), \\
   (\nabla\mathbf{u}^\epsilon,\nabla\mathbf{B}^\epsilon)\in L^\infty(\tau,T;L^2)\cap L^2(0,T;L^2), \ \
    \sqrt{\rho^\epsilon}\dot{\mathbf{u}}^\epsilon \in L^\infty(\tau,T;L^2),
\end{cases}
\end{align*}
where $\theta>20\gamma$. A similar argument as that in \cite{LSX16} implies that
\begin{equation}\label{4.4}
  \lim_{\epsilon\rightarrow0}\|\nabla\mathbf{u}_0^\epsilon-\nabla\mathbf{u}_0\|_{L^2}
  + \lim_{\epsilon\rightarrow0}\|\sqrt{\rho_0^\epsilon}\mathbf{u}_0^\epsilon-\sqrt{\rho_0}\mathbf{u}_0\|_{L^2}
  + \lim_{\epsilon\rightarrow0}\|\mathbf{B}_0^\epsilon-\mathbf{B}_0\|_{H^1}=0.
\end{equation}
In addition, the equation $\eqref{a1}_1$ yields that
\begin{equation*}
  \partial_t\rho^\epsilon \ \ \text{is bounded in}  \ L^2(\tau,T;H^{-1}),
\end{equation*}
which, combined with \eqref{4.3} and the local Aubin--Lions compactness, shows that
(up to the extraction of a subsequence)
\begin{equation}\label{4.5}
  \rho^\epsilon-\phi\epsilon\rightarrow \rho \ \ \text{strongly in} \  C\big([0,T];L^q\big), \ \ \text{for any} \
  q\in[1,\theta).
\end{equation}
Furthermore, from the momentum equation $\eqref{a1}_2$ we have that
\begin{equation*}
  \partial_t(\rho^\epsilon\mathbf{u}^\epsilon) \ \ \text{is bounded in}  \ L^2\big(\tau,T;W^{-1,s}\big) \ \ \text{for some} \ s>1,
\end{equation*}
which, along with the uniform bound
\begin{equation*}
  \rho^\epsilon\mathbf{u}^\epsilon \in L^\infty\big(0,T;L^{\frac{2\theta}{\theta+1}}\big)
\end{equation*}
and the Aubin--Lions lemma, implies that (up to a subsequence)
\begin{equation}\label{4.6}
  \rho^\epsilon\mathbf{u}^\epsilon\rightarrow \mathbf{m} \ \ \text{strongly in} \
  C\big([\tau,T];L^{1+\zeta}_{\loc}\big) \ \ \text{for some small} \  \zeta.
\end{equation}
A similar discussion can be conducted for the magnetic field $\{\mathbf{B}^\epsilon\}$ by using \eqref{4.4}.
Define $\mathbf{u}=\mathbf{m}/\rho$ on $\{\rho>0\}$ and $\mathbf{u}=0$ on $\{\rho=0\}$.
By standard arguments (see, e.g., \cite{PL98,NS04,LSX16}), it follows from \eqref{4.5} and \eqref{4.6} that
\begin{equation*}
  \nabla\mathbf{u}^\epsilon\rightarrow \nabla\mathbf{u}, \ \
  \nabla\mathbf{B}^\epsilon\rightarrow \nabla\mathbf{B} \ \ \text{strongly in} \
  L^2(\tau,T;L^2(\{\rho>\delta\})) \ \ \text{for any} \ \delta>0.
\end{equation*}
Hence, passing to the limit $\epsilon\rightarrow0$ shows that the limit $(\rho, \mathbf{u},\mathbf{B})$ is indeed a weak solution in the sense of Definition \ref{d1.1} satisfying \eqref{1.12} on $\mathbb{R}^2\times(0,T]$ for any $0<T<T^*=\infty$.
\end{proof}

\section*{Conflict of interest}
The authors declare that they have no conflict of interest.

\section*{Data availability}
No data was used for the research described in the article.

\end{document}